\newcommand{\R}{\mathbb{R}}
\newcommand{\N}{\mathbb{N}}
\newcommand{\e}{\varepsilon}
\newcommand{\di}[1]{\,\mathrm{d}#1}
\newcommand{\dive}{\operatorname{div}}
\newcommand{\dist}{\operatorname{dist}}
\newcommand{\tr}{\operatorname{Tr}}
\newcommand{\inside}{\operatorname{inside}}
\newcommand{\ddt}{\frac{\operatorname{d}}{\operatorname{d}t}}
\newcommand{\ID}{\operatorname{Id}}
\newcommand\restr[2]{{
  \left.\kern-\nulldelimiterspace 
  #1 
  \vphantom{\big|} 
  \right|_{#2} 
  }}
\newcommand{\myitem}[1]{%
\item[#1]\protected@edef\@currentlabel{#1}%
}
\newtheorem{definition}{Definition}
\newtheorem{theorem}{Theorem}
\newtheorem{lemma}{Lemma}
\newtheorem{remark}{Remark}
\crefname{lemma}{Lemma}{lemmas}
\Crefname{lemma}{Lemma}{Lemmas}
\crefname{thm}{theorem}{theorems}
\Crefname{thm}{Theorem}{Theorems}
\Crefname{algocf}{Algo.}{Algorithm}
\numberwithin{equation}{section}
\begin{document}

\title{Precomputing approach for a two-scale phase transition model}
\author[$\dagger,1$]{Michael Eden}
\author[$\star$]{Tom Freudenberg}
\author[$\dagger$]{Adrian Muntean}

\affil[$\dagger$]{Department of Mathematics and Computer Science, Karlstad University, Sweden}
\affil[$\star$]{Center for Industrial Mathematics, University of Bremen, Germany}

\maketitle
\begin{abstract}
In this study, we employ analytical and numerical techniques to examine a phase transition model with moving boundaries.
The model displays two relevant spatial scales pointing out to a macroscopic phase and a microscopic phase, interacting on disjoint  inclusions.
The shrinkage or the growth of the inclusions is governed by a modified Gibbs-Thomson law depending on the macroscopic temperature, but without accessing curvature information.
We use the Hanzawa transformation to transform the problem onto a fixed reference domain.
Then a fixed-point argument is employed to demonstrate the well-posedness of the system for a finite time interval. 
Due to the model's nonlinearities and the macroscopic parameters, which are given by differential equations that depend on the size of the inclusions, the problem is computationally expensive to solve numerically.
We introduce a precomputing approach that solves multiple cell problems in an offline phase and uses an interpolation scheme afterward to determine the needed parameters.
Additionally, we propose a semi-implicit time-stepping method to resolve the nonlinearity of the problem.
We investigate the errors of both the precomputing and time-stepping procedures and verify the theoretical results via numerical simulations.
\end{abstract}

{\bf Keywords:  Phase transitions, two-scale model, moving boundary problem, numerical analysis, simulation}

{\bf MSC2020: 35R37, 65M60, 80A22, 35K55}

\footnotetext[1]{\url{michael.eden@kau.se}}
\stepcounter{footnote}
\section{Introduction}\label{sec:introduction}
Phase transition models describe the heat dynamics in a system where phase transitions can occur, such as water-ice or austenite-pearlite microstructures in steel.
One of the main difficulties is the need to account for the changing geometries inherent to the transition process.
In some cases, there is an additional scale separation: the phase transition processes might happen at the microscale while our interest might be in the effective macroscopic behaviors.
Examples include microstructural changes in steel \cite{Eden_Muntean_thermo,Vander2004}, liquid–solid phase transitions in metals \cite{EKK02,Kobayashi1993}, or thawing processes in permafrost \cite{Peszynska2024}. In these cases, the focus is not on the geometric changes at the microscale but on their impact on macroscopic properties.
However, via homogenization techniques (e.g., two-scale convergence or asymptotic expansion) it is easy to see that the microstructural changes can not be neglected: In the resulting models, the phase transitions are still present on the microscale and directly influence the effective material parameters, see, e.g., \cite{EdenMuntean24,EKK02,Visintin1997}.
Such two-scale models (sometimes also called micro-macro model or distributed microstructures model) are computationally extremely costly, mainly due to the following two factors: 
\begin{enumerate}
    \item[$(a)$] \textit{Strong non-linear coupling between the scales:} The macroscopic temperature field is driving the microstructural changes and the microstructural changes are in turn non-linearly influencing the macroscopic material parameters.
    \item[$(b)$] \textit{Family of moving boundary problems:} Every macroscopic point is associated with corresponding moving boundary problems at the microscale both via the interaction with the microscopic temperature but also via the geometric changes influencing the heat conductivity via the associated cell problems.
\end{enumerate} 
Since one of the main goals of homogenization is the derivation of simpler models that are computationally less heavy, these issues might at first glance call into question the usefulness of such limit models.
There is, however, one important advantage these two-scale models have, namely the explicit separation of the scales.
This can be exploited in many different ways: via iteration procedures \cite{GärttnerFrolkovicKnabner2020}, parallel computing \cite{Bastidas2020}, machine learning \cite{Gärttner2022}, and/or interpolation techniques \cite{Nepal2024}.

In this work, we propose, analyze, and implement a precomputing strategy in which the calculations of the macroscopic coefficients are pushed into an \textit{offline phase}.
Instead of having to calculate the effective conductivity via geometry-dependent cell problems at every macroscopic point and at each time step, we precompute the conductivity instead for a range of potential parameter values and interpolate in between.
Please note, that this precomputing is \textit{perfectly parallizable}.
We show that this interpolation is stable and that the introduced error in the two-scale system can be controlled via the step size in the parameter space and the order of the interpolation. 
This idea is not completely new and similar approaches have been outlined before, see \cref{subsec:lit_review} for a short literature overview.
The non-linearities in the model are resolved with a semi-implicit time-stepping method which we show to converge linearly.
We confirm these convergence results via simulations and the corresponding FEniCS code is made freely available \cite{Code}.

\subsection{Two-scale phase transition models}
We want to elaborate a bit on the origin and structure of the two-scale phase transition problem that we are looking at in this work.
The model in question is the homogenization limit of a highly heterogeneous two-phase system consisting of a connected phase $\Omega_\e^a(t)$ with disconnected inclusions of a different phase $\Omega_\e^b(t)$, see~\cref{figure:domain_evolution}. 
Both domains depend on time due to the potential of phase transitions. 
Here, $\e>0$ is a small parameter denoting the heterogeneity of the system and the homogenization limit can be achieved via a (formal or rigorous) limit process $\e\to0$.
\begin{figure}[ht]
\centering
  \vspace{-.3cm}
  \scalebox{.35}{\input{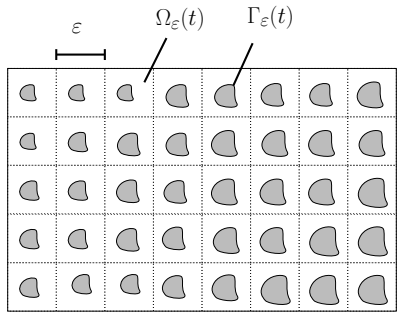}}
  \caption{A potential setup of a highly heterogeneous two-phase system with evolving phase interface.}
  \label{figure:domain_evolution}
\end{figure}

\begin{subequations}\label{system:e_problem}
A two-phase problem for such a phase transition problem could look like
\begin{alignat}{2}
    \partial_t\theta_\e^a-k^a\Delta \theta_\e^a&=f_\e^a&\quad&\text{in}\ \Omega_\e^a(t),\ t>0,\\
    \partial_t\theta_\e^b-\e^2k^b\Delta \theta_\e^b&=f_\e^b&\quad&\text{in}\ \Omega_\e^b(t),\ t>0,\\
    \theta_\e^a&=\theta_\e^b&\quad&\text{on}\ \Gamma_\e(t),\ t>0,\label{eq:eps_continuity}\\
    -(k^a\nabla\theta_\e^a+\e^2k^b\nabla\theta_\e^b)\cdot n_\e&=\e Lv_\e&\quad&\text{on}\ \Gamma_\e(t),\ t>0,
\end{alignat}
where $L\in\R$ denotes the Latent heat and $v_\e$ the normal velocity of the moving phase interface $\Gamma_\e(t)$.
The particular $\e$-scalings are natural for this particular setup with inclusions, cf. \cite{ArbogastDouglasHornung90,EdenMuntean24,Gahn21}.
The normal velocity has to be implicitly or explicitly described via an additional equation.
The usual Stefan condition is $\theta_\e^a=\theta_{ref}$, where $\theta_{ref}$ is the reference temperature of the phase transition in question.\footnote{In a water/ice system under normal conditions, we have $\theta_{ref}=0^\circ C$.}
Another option is Gibbs-Thomson law without curvature which postulates
\begin{equation}
v_\e=\e (\theta_\e^a-\theta_{ref})\quad\text{on}\ \Gamma_\e(t),\ t>0.
\end{equation}
\end{subequations}
The model in question in this work is the limit of System \eqref{system:e_problem} for $\e\to0$.
Such a limit procedure was conducted via two-scale convergence techniques for a slightly simplified one-phase equivalent of this system in \cite{EdenMuntean24}, and the results can easily, at least formally, be extended to this two-phase system via asymptotic expansion.
Via such a limit process, we can identify limits such that, in some sense, $\theta_\e^a\to\Theta$ and $\theta_\e^b\to\theta$.
Here, $\Theta$ is the temperature field of the macroscopic phase which is defined on a non-changing macroscopic domain $\Omega$ and $\theta$ is the temperature field of the microscopic phase given on a changing microscopic domain $Y(t)$.
Of course, $\Theta$ and $\theta$ are intricately coupled via energy conservation and the continuity condition \cref{eq:eps_continuity}.
Moreover, all material coefficients for the macroscopic heat problem are now dependent on the microstructural changes as well, rendering the problem highly nonlinear.
The exact setup of the two-scale problem is given in \cref{sec:setting_model}.

\subsection{Discussion on related literature}\label{subsec:lit_review}
An early work on two-scale models in phase transitions is due to A.~Visintin \cite{Visintin1997}, where the transition between the two scales is achieved via convolution with a Gaussian kernel but without explicitly looking at evolving microstructures.
More closely related to our work, in \cite{EKK02,Eck2005}, two-scale models for dendritic growth with prescribed geometry changes are formally derived via asymptotic expansion.
In \cite{Eden_Muntean_thermo, eden_homogenization_2019} two-phase models for phase transitions are derived, albeit with a priori known geometry evolution.
Very recently, in \cite{EdenMuntean24}, an effective one-phase model for phase transitions was rigorously derived via two-scale convergence.

Very similar models and challenges can be found in the context of reactive transport or coagulation/fragmentation dynamics in evolving porous media which has garnered quite a bit of interest in recent years.
We point to the upscaling works in \cite{Bhattacharya22,Gahn21,GahnPop23,MunteanNikolopolous20,RayNoordenFrankKnabner12,WiedemannPeter23}, which are concerned with the derivation of two-scale models in that context, and note that the resulting models are structurally quite similar to the phase transition problem considered in this work.
Naturally, there are quite a few works looking more closely at the resulting two-scale models: In \cite{Meier09}, the well-posedness for such a two-scale model was established for a simplified setup where the geometry changes do not impact the diffusivity except via the porosity.
In \cite{GaerttnerKnabnerRay23}, strong local-in-time solutions for a two-scale model, even allowing for nonuniform geometry changes, are established via maximal parabolic regularity and a level set approach.

Such models often come with huge computational challenges due to their nonlinear nature and complex coupling.
Our main approach of dealing with this is a precomputing strategy in which the calculations of the effective macroscopic coefficients are pushed into an offline phase; introducing an interpolation error but significantly speeding up the calculations.
Similar ideas are often explored in the context of shape or topology optimization, see e.g.,  \cite{LichtiLeichner2022,Clezio23,VuLukesStingl2023} and references therein.
In \cite{Nepal2024}, the convergence of a precomputing scheme for a two-scale dispersion model for porous media is shown, however, there are no geometry changes present.

Naturally, there are other approaches to tackle the computational complexity of two-scale models:
Instead of solving the entire coupled system in a classical FE approach, an alternative idea is to explicitly decouple the macro and micro temperatures by solving between them iteratively.
This is closely related to the FE$^2$ approach \cite{FEYEL2003} and the Heterogeneous Multiscale Method \cite{WeinanEnggqvist03}.
This approach has two main advantages: the calculations on the microscale can be run in parallel and an adaptive interpolation strategy can be used to only solve the microscale problems for a small number of macroscopic points.
However, for each time step, there may be a need for several iteration steps, which again can be costly, especially when the coupling is complex.
In addition, the iteration process introduces a new error.
The approach has been studied and investigated for many different two-scale problems, see \cite{Bastidas2020,ElbingerKnabner2022,freudenberg2024,LindMuntean18,NeussRaduLudwigJager10}.
As a direct extension of the iterative scheme, an adaptive clustering approach has been developed in \cite{REDEKER2013}.
Here, micro problems with comparable macroscopic data are paired together and solved via one representative system.
This allows for a speed-up and even enables fast simulations in higher-dimensional setups.
Of course, this approach produces further error sources, for which, to our knowledge, no estimates are available.
But the numerical results in \cite{REDEKER2013} show good approximation accuracy. 

Another approach is the hierarchical FEM as they are used for multiscale problems in \cite{Hoang2004, PARK2020}.
Here, special FE spaces are constructed to reduce the DOFs of those functions which depend on multiple scales.
This method is comparable to a wavelet approach and allows for analytical error estimates, but is more complicated to set up than the previous methods.
Finally, we want to mention a model reduction approach which is often encountered in mechanics, see \cite{GUO2024} and the references therein.
A model reduction for microscopic crystal growth was considered in \cite{Redeker2015}. 
This approach can lead to significant speed-ups, but requires, especially for time-dependent problems, a large amount of data to obtain reasonable models.

\subsection{Outline of the paper}
The paper is structured as follows: In \cref{sec:setting_model}, we present the two-scale system, collect the mathematical assumptions, and introduce our notion of weak solutions.
In \cref{sec:analysis}, we establish the existence of local-in-time weak solutions via a fixed-point argument by utilizing Hanzawa transformations (\cref{thm:existence}).
Moreover, we show that uniqueness and long-time existence are given when some additional estimates are met.
This is followed up by \cref{sec:numerical_analysis}, where we introduce and discuss our precomputing strategy.
Here, we first show the stability of the resulting scheme with respect to the interpolation error (\cref{thm:error_precomputing}) before then analyzing our semi-implicit time-stepping method with which we resolve the non-linearities (\cref{lemma:time_stepping}).
We present some simulation scenarios in \cref{sec:simulations}: in particular, we are showing convergence curves for our precomputing strategy.
Finally, we close with a short conclusion and discussion (\cref{section:conclusion}).

\section{Setting of the geometry and model equations}\label{sec:setting_model}
In this section, we introduce the geometric setup and the two-scale system modeling a two-phase composite medium with phase transformations taking place at the microscale.
We also collect the mathematical assumptions and present our notion of weak convergence.

To this end, let $S = (0, T )$, for $T\in\R\cup\{\infty\}$, denote the time interval of interest.
Let a Lipschitz domain $\Omega\subset\R^d$ ($d=2,3$, outside normal vector $\nu$) represent the macroscopic domain, set $Y = (0, 1)^d$ and let a $C^3$-domain $Y_0\subset\subset Y$ (outer normal vector $n$) represent the initial micro-domain.
We set $\Gamma_0=\partial Y_0$.
The micro-domain $Y_0$ is allowed to grow or shrink based on the dynamics of the problem and this change is characterized via a height parameter $h$ inside a tubular neighborhood $U_\Gamma\subset\subset Y$ of $\Gamma_0$ with width $2a$ for some $a>0$.
Here, $h=0$ indicates no changes, $h<0$ shrinkage, and $h>0$ growth.
We denote the dynamic domain by $Y(h)$ and interface by $\Gamma(h)$,\footnote{Of course, $Y(0)=Y_0$ and $\Gamma(0)=\Gamma_0$.} respectively, via
\[
\Gamma(h):=\left\{\gamma+h n(\gamma)\ : \ \gamma\in\Gamma_0\right\},\quad Y(h):=\inside(\Gamma(h)).
\]
Please note that the growth of the micro-domain is cell uniform, i.e., the height $h=h(t,x)$ only depends on time $t\in S$ and the macro variable $x\in\Omega$.
As a consequence, shape changes are not possible (e.g., if $Y_0$ is a ball of radius $r$, $Y(h)$ will be a ball of radius $r+h$).
For the exact setup of the transformation, we refer to \cref{subsec:hanzawa}, and, for a sketch of this setup, we refer to \cref{fig:two-scale_setup}.
\begin{figure}[ht]
    \centering
  \scalebox{1.0}{\tikzset{every picture/.style={line width=0.75pt}} 

\begin{tikzpicture}[x=0.75pt,y=0.75pt,yscale=-1,xscale=1]

\draw  [color={rgb, 255:red, 213; green, 213; blue, 213 }  ,draw opacity=1 ][fill={rgb, 255:red, 255; green, 255; blue, 255 }  ,fill opacity=1 ][dash pattern={on 4.5pt off 4.5pt}] (439.94,31.46) .. controls (439.94,23.17) and (446.66,16.46) .. (454.94,16.46) .. controls (463.23,16.46) and (469.94,23.17) .. (469.94,31.46) .. controls (469.94,39.74) and (463.23,46.46) .. (454.94,46.46) .. controls (446.66,46.46) and (439.94,39.74) .. (439.94,31.46) -- cycle ;
\draw   (209,41.25) .. controls (209,31.17) and (217.17,23) .. (227.25,23) -- (378.17,23) .. controls (388.25,23) and (396.42,31.17) .. (396.42,41.25) -- (396.42,96) .. controls (396.42,106.08) and (388.25,114.25) .. (378.17,114.25) -- (227.25,114.25) .. controls (217.17,114.25) and (209,106.08) .. (209,96) -- cycle ;
\draw  [fill={rgb, 255:red, 0; green, 0; blue, 0 }  ,fill opacity=1 ] (328.5,34) .. controls (328.5,33.17) and (329.17,32.5) .. (330,32.5) .. controls (330.83,32.5) and (331.5,33.17) .. (331.5,34) .. controls (331.5,34.83) and (330.83,35.5) .. (330,35.5) .. controls (329.17,35.5) and (328.5,34.83) .. (328.5,34) -- cycle ;
\draw  [fill={rgb, 255:red, 0; green, 0; blue, 0 }  ,fill opacity=1 ] (316.5,96) .. controls (316.5,95.17) and (317.17,94.5) .. (318,94.5) .. controls (318.83,94.5) and (319.5,95.17) .. (319.5,96) .. controls (319.5,96.83) and (318.83,97.5) .. (318,97.5) .. controls (317.17,97.5) and (316.5,96.83) .. (316.5,96) -- cycle ;
\draw   (427.09,3.6) -- (482.8,3.6) -- (482.8,59.31) -- (427.09,59.31) -- cycle ;
\draw  [fill={rgb, 255:red, 255; green, 255; blue, 255 }  ,fill opacity=1 ] (446.17,31.46) .. controls (446.17,26.61) and (450.1,22.68) .. (454.94,22.68) .. controls (459.79,22.68) and (463.72,26.61) .. (463.72,31.46) .. controls (463.72,36.3) and (459.79,40.23) .. (454.94,40.23) .. controls (450.1,40.23) and (446.17,36.3) .. (446.17,31.46) -- cycle ;
\draw    (330,34) .. controls (361.12,12.51) and (391.92,15.31) .. (422.32,23.31) ;
\draw   (497.75,62.27) -- (553.47,62.27) -- (553.47,117.98) -- (497.75,117.98) -- cycle ;
\draw  [fill={rgb, 255:red, 255; green, 255; blue, 255 }  ,fill opacity=1 ] (503.75,90.12) .. controls (503.75,78.05) and (513.54,68.26) .. (525.61,68.26) .. controls (537.68,68.26) and (547.47,78.05) .. (547.47,90.12) .. controls (547.47,102.2) and (537.68,111.99) .. (525.61,111.99) .. controls (513.54,111.99) and (503.75,102.2) .. (503.75,90.12) -- cycle ;
\draw    (318,96) .. controls (335.42,128.25) and (461.42,132.25) .. (489.03,92.8) ;
\draw    (462.8,28.06) -- (498.08,12.5) ;
\draw    (454.94,31.46) -- (510.08,41.5) ;
\draw    (545.95,83.29) -- (575.58,71.5) ;
\draw   (107.75,38.62) -- (163.47,38.62) -- (163.47,94.33) -- (107.75,94.33) -- cycle ;
\draw  [color={rgb, 255:red, 213; green, 213; blue, 213 }  ,draw opacity=1 ][fill={rgb, 255:red, 255; green, 255; blue, 255 }  ,fill opacity=1 ] (120.61,66.48) .. controls (120.61,58.19) and (127.33,51.48) .. (135.61,51.48) .. controls (143.89,51.48) and (150.61,58.19) .. (150.61,66.48) .. controls (150.61,74.76) and (143.89,81.48) .. (135.61,81.48) .. controls (127.33,81.48) and (120.61,74.76) .. (120.61,66.48) -- cycle ;
\draw    (135.61,66.48) -- (146.95,34.45) ;
\draw  [color={rgb, 255:red, 213; green, 213; blue, 213 }  ,draw opacity=1 ][fill={rgb, 255:red, 255; green, 255; blue, 255 }  ,fill opacity=1 ][dash pattern={on 4.5pt off 4.5pt}] (510.61,90.12) .. controls (510.61,81.84) and (517.33,75.12) .. (525.61,75.12) .. controls (533.89,75.12) and (540.61,81.84) .. (540.61,90.12) .. controls (540.61,98.41) and (533.89,105.12) .. (525.61,105.12) .. controls (517.33,105.12) and (510.61,98.41) .. (510.61,90.12) -- cycle ;
\draw    (525.61,90.12) -- (581.58,104.33) ;
\draw    (100.37,55.36) -- (120.44,63.96) ;

\draw (216.5,28.9) node [anchor=north west][inner sep=0.75pt]  [font=\normalsize]  {$\Omega $};
\draw (317.33,36.07) node [anchor=north west][inner sep=0.75pt]  [font=\small]  {$x_{1}$};
\draw (303,98.07) node [anchor=north west][inner sep=0.75pt]  [font=\small]  {$x_{2}$};
\draw (501.33,2.9) node [anchor=north west][inner sep=0.75pt]    {$\Gamma ( h( t,x_{1}))$};
\draw (512.83,33.4) node [anchor=north west][inner sep=0.75pt]    {$Y( h( t,x_{1}))$};
\draw (582.83,98.4) node [anchor=north west][inner sep=0.75pt]    {$Y( h( t,x_{2}))$};
\draw (578.83,63.9) node [anchor=north west][inner sep=0.75pt]    {$\Gamma ( h( t,x_{2}))$};
\draw (143.43,15.4) node [anchor=north west][inner sep=0.75pt]    {$Y_{0}$};
\draw (84.77,34.4) node [anchor=north west][inner sep=0.75pt]    {$\Gamma _{0}$};

\end{tikzpicture}}
  \caption{Representation of the two-scale system at some time $t>0$. At the left an example for the initial micro-domain $Y_0=Y(0)$. In this picture, $h(t,x_1)<0$ (shrinking ball) and $h(t,x_2)>0$ (growing ball).
  Please note that the mathematical setup and the theoretic result allows for any $C^3$-inclusion (not just balls) but in our numerical simulations, we only look at balls.}
  \label{fig:two-scale_setup}
\end{figure}
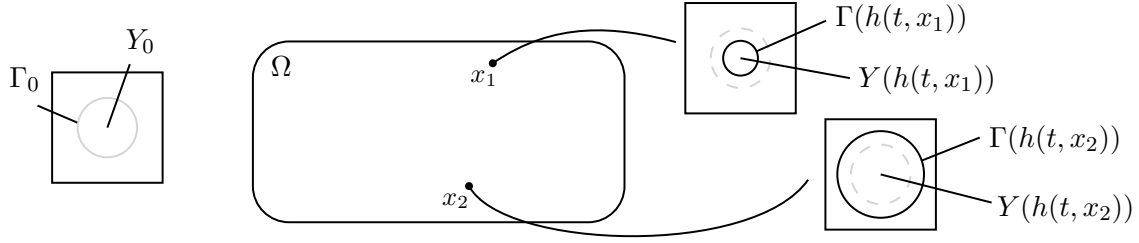

Now, let $\Theta=\Theta(t,x)$ and $\theta=\theta(t,x,y)$ denote the macroscopic and microscopic temperature, respectively, and $h=h(t,x)$ the height function characterizing the micro-domain changes.
On the macroscale, we have a standard heat equation where all parameters non-linearly depend on $h$:
\begin{subequations}\label{eq:system_PDE}
\begin{alignat}{2}
\partial_t(C(h)\Theta)-\dive(K(h)\nabla\Theta)&=F- L(h)\partial_th-q(h)[\theta]&\quad&\text{in}\ S\times\Omega,\label{eq:system:macro_heat}\\
-K(h)\nabla\Theta\cdot n&=0&\quad&\text{on}\ S\times\partial\Omega\label{eq:system:macro_heatb},\\
\Theta(0)&=\Theta_0&\quad&\text{in}\ \Omega\label{eq:system:macro_heatc}.
\end{alignat}
\end{subequations}
Here, $C$ denotes the heat capacity, $K$ the effective heat conductivity, and $ L$ is the latent heat.
Moreover, $q(h)[\theta]\colon S\times\Omega\to\R$ accounts for the energy flow from the micro into the macro region and $F$ is a generic volume source.
The $h$-dependent coefficients are given via
\begin{subequations}
\begin{alignat}{2}
    C&\colon(-a,a)\to(0,\infty),&\quad C(h)&=1-|Y(h)|,\label{eq:system:heat_cap}\\
    K&\colon(-a,a)\to\R^{d\times d},&\quad K(h)_{i,j}&=\int_{Y\setminus Y(h)}\mathcal{K}(\nabla_y\xi_j(h)+e_j)\cdot(\nabla_y\xi_i(h)+e_i)\di{y},\label{eq:system:conductivity}\\
     L&\colon(-a,a)\to\R,&\quad  L(h)&=|\Gamma(h)|,\label{eq:system:latent_heat}\\
    q(\cdot)[\theta]&\colon(-a,a)\to \R,&\quad q(h)[\theta]&=\int_{\Gamma(h)}\kappa\nabla_y\theta\cdot n\di{\sigma_y}\label{eq:system:flux}.
\end{alignat}
Here, $\mathcal{K}, \kappa>0$ are the heat conductivity of the macro- and micro-phase and $\xi_j(h)\in H^1_\#(Y\setminus Y(h))$, $j=1,...,d$, solutions to the cell problems
\begin{alignat}{2}\label{eq:system:cell_problem}
\int_{Y\setminus Y(h)}\mathcal{K}(\nabla_y\xi_j(h)+e_j)\cdot\nabla_y\phi\di{y}=0
\end{alignat}
for all $\phi\in H^1_\#(Y\setminus Y(h))$.
\end{subequations}

Please note that the coefficients \cref{eq:system:heat_cap,eq:system:conductivity,eq:system:latent_heat,eq:system:flux} represent what we typically expect in a two-scale limit system; see \cite{Bastidas2020,EKK02,EdenNikolopoulosMuntean22,KNP14} for similar setups.
For example, $C(h)$ represents the effective volumetric heat capacity of the macroscopic phase which is given by $C(h)=|Y\setminus Y(h)|=1-|Y(h)|$.
This accounts for the competition with the micro-phase.

The heat problem for the macroscopic phase (\cref{eq:system:macro_heat,eq:system:macro_heatb,eq:system:macro_heatc}) is strongly coupled with a Stefan-type problem for the microscopic phase in the form of
\begin{subequations}
\begin{alignat}{2}
\partial_t\theta-\dive_y(\kappa\nabla_y\theta)&=f&\quad&\text{in}\ Y(h), \ \text{in}\  S\times\Omega,\\
\theta&=\Theta&\quad&\text{on}\ \Gamma(h), \ \text{in}\  S\times\Omega,\label{eq:system:continuity}\\
\partial_th&=\theta-\Theta^{ref}&\quad&\text{on}\ \Gamma(h), \ \text{in}\ S\times\Omega,\label{eq:system:gibbs_thomson}\\
\theta(0)&=\theta_0&\quad&\text{in}\ \Omega\times Y(0).
\end{alignat}
\end{subequations}
The moving boundary condition \eqref{eq:system:gibbs_thomson} is a modified \textit{Gibbs-Thomson} law without curvature, where the normal velocity of the moving boundary, i.e., $\partial_th$, is determined by the difference of the temperature to a reference temperature.
As an example, for the water-ice system under normal conditions, we have $\Theta^{ref}=0^\circ C$.
Here, we just assume $\Theta^{ref}\in \R$.
The overall problem is both highly nonlinear due to the changing domains and is fully coupled via the following mechanism:
\begin{itemize}
    \item The flux term $q$ in \cref{eq:system:macro_heat} tracks the heat exchange between the phases. As a consequence, the microphase can act as a heat storage or heat sink.
    \item \Cref{eq:system:continuity} enforces continuity of the micro and macro temperatures at the micro-interface $\Gamma(h)$. This also implies cell-uniform growth: Since $\Theta$ is $y$-independent, $h$ must be as well.
    \item The growing or shrinking domains directly influence the macroscopic coefficients (\cref{eq:system:heat_cap,eq:system:conductivity,eq:system:latent_heat,eq:system:flux}).
\end{itemize}
We point out that the Neumann flux appearing via $q(h)[\theta]$ in the strong form \cref{eq:system:macro_heat} disappears in the weak form as it balances with the corresponding term in the micro part after integration by parts.
The main difficulties in the analysis of this system are the changing micro-geometries and the inherent non-linearity associated with those changes.
We do this by transforming the geometries into the fixed, initial geometries via the so-called Hanzawa transformation.
Details are given in \cref{subsec:hanzawa}.

\begin{remark}\label{remark:initial_height}
    In our setup, it holds $h(0)=0$ which means that the initial micro-geometry is uniform in $x\in\Omega$.
    However, we could easily consider an initial condition $h\in L^\infty(\Omega)$ as long as it is small enough (see also \cref{remark:analysis_initial_height}).
    As this would introduce some additional technical estimates in our analysis, we stick to a uniform initial setup. 
\end{remark}

\subsection{Assumptions and weak forms}
A few words, regarding our notation: In general, we use lowercase and capitalized symbols for the microscopic and macroscopic counterparts, e.g., $f$ and $F$ for the volume source densities.
For functions depending on both the macroscopic variable $x\in\Omega$ and $y\in Y$, we take $\dive_y,\nabla_y$ to denote the spatial differential operators with respect to $y$.
Moreover, we take $c,C>0$ as generic constants (whose value can change even from line to line) and where we explicitly point out when they are independent of a specific parameter.
We start by formulating the assumptions we place on our geometry and data:
\begin{enumerate}
    \item[\textbf{(A1)}]\label{assumptionA1} The heat conductivities $\kappa,\mathcal{K}\in\R^{d\times d}$ are symmetric and positive definite.
    \item[\textbf{(A2)}]\label{assumptionA2} The volume source densities satisfy $f\in L^\infty(S\times\Omega;C^{0,1}(\overline{Y_0}))$ and $F\in L^\infty(S\times\Omega)$.
    \item[\textbf{(A3)}]\label{assumptionA3} The initial geometry $Y(0)=Y_0\subset\subset(0,1)^d$ is a $C^3$ domain and $\Omega\subset\R^d$ is a Lipschitz domain. 
    The tubular neighborhood $U_\Gamma\subset(0,1)^d$ of the interface $\Gamma=\partial Y_0$ has tubular radius $a>0$.
    \item[\textbf{(A4)}] The initial conditions satisfy $\theta_{0}\in L^\infty(\Omega\times Y_0)$ and $\Theta_0\in L^\infty(\Omega)$.
\end{enumerate}
Please note that the regularity presented in Assumptions $(A2)$ and $(A4)$ is higher than the standard parabolic regularity for two reasons: To be able to work with coordinate transforms, the micro heat production source needs to be continuous in the $y$-Variable.
We chose Lipschitz continuous to guarantee the uniqueness of solutions.
The boundedness of the data is needed to ensure boundedness of the solutions; without this, it is impossible to control the height functions (see \cref{eq:system:gibbs_thomson}).

Now, for any $0<T^*\leq T$ and $h\in L^\infty((0,T^*)\times\Omega)$ satisfying $\|h\|_\infty\leq a^*=\nicefrac{a}{8}$, we introduce the corresponding non-cylindrical micro-domain
\[
Q[h](T^*)=\bigcup_{(t,x)\in (0,T^*)\times\Omega} \Bigl[\{(t,x)\}\times Y(h(t,x)) \Bigr]\subset (0,T')\times\Omega\times Y.
\]
and the Banach space
\begin{multline*}
\mathcal{X}[h](T^*)=\big\{(\Theta,\theta)\in L^2(0,T^*;H^1(\Omega))\times L^2(Q[h](T^*))\ : \
\theta(t,x)\in H^1(Y(h)),\\ \theta(t,x,y)=\Theta(t,x)\ \text{on}\ \Gamma(h)\ \text{for a.a. }(\tau,x)\in (0,T')\times\Omega,\ y\in \Gamma(h) \big\}.
\end{multline*}
Assuming that the height function $h\in L^\infty((0,T^*)\times\Omega)$ describes the moving boundaries and taking any function $(\Phi,\phi)\in\mathcal{X}[h](T^*)$, we introduce the weak form for the PDE system \eqref{eq:system_PDE}:
\begin{multline*}
\int_{\Omega}\partial_t(C(h)\Theta)\Phi\di{x}+\int_\Omega\int_{Y(h)}\partial_t\theta\phi\di{y}\di{x}+\int_{\Omega}K(h)\nabla\Theta\cdot\nabla\Phi\di{x}
+\int_\Omega\int_{Y(h)}\kappa\nabla_y\theta\cdot\nabla_y\phi\di{y}\di{x}\\
=\int_{\Omega}\left(F- L(h)\partial_th\right)\Phi\di{x}
+\int_\Omega\int_{Y(h)}f\phi\di{y}\di{x}
\end{multline*}
Please note that the $q(h)[\theta]$ term in \cref{eq:system:macro_heat} cancels with the micro boundary integral due to $\phi=\Phi$ on $\Gamma(h)$.
With that in mind, we can now define a solution of the two-scale heat problem in moving coordinates:

\begin{definition}[Weak solution in moving coordinates]\label{def:weak_sol_moving}
We say that $(h,\Theta,\theta)$ is a local-in-time weak solution in moving coordinates, when there is a time horizon $S^*=(0,T^*)$, such that

\begin{itemize}
    \item[$(i)$] The domain changes are characterized by the height function $h\in W^{1,\infty}(S^*;L^\infty(\Omega))$ which satisfy
    \[
    \|h\|_{L^\infty(S^*\times\Omega)}\leq a^*.
    \]
    \item[$(ii)$] The velocity of the domain evolution is driven by the temperature:
    \[
    \partial_th=\Theta-\Theta^{ref}\  \text{in}\ L^2(S^*\times\Omega).
    \]
    \item[$(iii)$] The temperatures $(\Theta,\theta)\in \mathcal{X}[h](T^*)$ with $\partial_t(\Theta,\theta)\in L^2(S^*\times\Omega)\times L^2(Q[h](T^*))$ satisfy
    \begin{multline*}
    \int_S\int_{\Omega}\partial_t(C(h)\Theta)\Phi\di{x}+\int_\Omega\int_{Y(h)}\partial_t\theta\phi\di{y}\di{x}+\int_{\Omega}K(h)\nabla\Theta\cdot\nabla\Phi\di{x}
    +\int_\Omega\int_{Y(h)}\kappa\nabla_y\theta\cdot\nabla_y\phi\di{y}\di{x}\\
    =\int_{\Omega}\left(F- L(h)\partial_th\right)\Phi\di{x}
    +\int_\Omega\int_{Y(h)}f\phi\di{y}\di{x}
    \end{multline*}
    for all $(\Phi,\phi)\in \mathcal{X}[h]$.
    \item[$(iv)$] The initial conditions are satisfied: $(\Theta(0),\theta(0), h(0))=(\Theta_0,\theta_0,0)$.
\end{itemize}
\end{definition}
This solution concept, however, is inconvenient for the mathematical analyses due to the changing domains.
We use the concept of the Hanzawa transformation to reformulate this problem in the fixed domain $Y_0=Y(0)$.
The precise definition and properties of this transformation are introduced in \cref{subsec:hanzawa} and specifically \cref{hanzawa_transform}.

Now, for any $0<T^*<T$ and height function $h\in W^{1,\infty}(0,T^*;L^\infty(\Omega))$ with $|h|\leq a^*$, let $s_h\in W^{1,\infty}(0,T^*;L^\infty(\Omega;C^2(\overline{Y})))$ be the corresponding Hanwaza transformation.
We introduce the Jacobian matrix $F_h=D_ys_h$, the Jacobian determinant $J_h=\det F_h$, and the velocity $\partial_ts_h$. 
Also, we denote the push-forward $S_h$ and the pull-back $S_h^{-1}$ for any sufficiently regular function $f\colon\overline{Y}\to\R$ defined via $S_hf=f\circ s_h$ and $S_h^{-1}f=f\circ s_h^{-1}$.
Finally, we introduce the transformed quantities 
\begin{alignat*}{2}
    c(h)&=J_h,&\qquad    \kappa(h)&=J_hF_h^{-1}\kappa F_h^{-T},\\
    f(h)&=J_hS_hf,&\qquad v(h)&=J_hF_h^{-1}\partial_ts_h.  
\end{alignat*}
With this transformation, we can alternatively formulate the problem in fixed coordinates where we use $\mathcal{X}_0(T^*):=\mathcal{X}[0](T^*)$:

\begin{definition}[Weak solution in fixed coordinates]\label{def:weak_sol_fixed}
We say that $(h,\Theta,\vartheta)$ is a local-in-time weak solution in moving coordinates, when there is a time horizon $S^*=(0,T^*)$, such that:

\begin{itemize}
    \item[$(i)$] The domain changes are characterized via the height function $h\in W^{1,\infty}(S^*;L^\infty(\Omega))$ which satisfy
    \[
    \|h\|_{L^\infty(S^*\times\Omega)}\leq a^*.
    \]
    \item[$(ii)$] The velocity is driven by the temperature:
    \[
    \partial_th=\Theta-\Theta^{ref}\  \text{in}\ L^2(S^*\times\Omega).
    \]
    \item[$(iii)$] The temperatures $(\Theta,\vartheta)\in \mathcal{X}_0(T^*)$ with $\partial_t(\Theta,\vartheta)\in L^2(S^*\times\Omega)\times L^2(S^*\times\Omega\times Y_0)$ satisfy
    \begin{multline}\label{eq:weak_form_reference}
    \int_S\int_{\Omega}\partial_t(C(h)\Theta)\Phi\di{x}+\int_\Omega\int_{Y_0}\partial_t(c(h)\vartheta)\phi\di{y}\di{x}+\int_{\Omega}K(h)\nabla\Theta\cdot\nabla\Phi\di{x}
    +\int_\Omega\int_{Y_0}\kappa(h)\nabla_y\vartheta\cdot\nabla_y\phi\di{y}\di{x}\\
    +\int_\Omega\int_{Y_0}\vartheta v(h)\cdot\nabla_y\phi\di{y}\di{x}
    =\int_{\Omega}\left(F- L(h)\partial_th\right)\Phi\di{x}
    +\int_\Omega\int_{Y_0}f(h)\phi\di{y}\di{x}
    \end{multline}
    for all $(\Phi,\phi)\in \mathcal{X}_0$.
    \item[$(iv)$] The initial conditions are satisfied: $(\Theta(0),\vartheta(0),h)=(\Theta_0,\vartheta_0,0)$.
\end{itemize}
\end{definition}

It is relatively straightforward to show that $(h,\Theta,\theta)$ is a solution in the sense of \cref{def:weak_sol_moving} if and only if $(h,\Theta,S_h^{-1}\vartheta)$ is a solution in the sense of \cref{def:weak_sol_fixed} (see, e.g., \cite[Lemma 3]{Wiedemann23}).
In \cref{sec:analysis}, we will work with \cref{def:weak_sol_fixed} to show the existence of local-in-time weak solutions.
\section{Analysis of the moving boundary problem}\label{sec:analysis}
In this section, we investigate the two-scale problem with changing micro-domains and show that there are weak solutions $(h,\Theta,\vartheta)$ in the sense of \cref{def:weak_sol_fixed} over some possibly small time interval $(0,T^*)\subset S$.
We also establish uniqueness in case of higher regularity and long-time existence under certain smallness conditions.
Our strategy and outline of this section is the following:

\begin{enumerate}
    \item[$(i)$] In \cref{subsec:hanzawa}, we introduce the Hanzawa transformation (\cref{hanzawa_transform}) and establish some uniform positivity and Lipschitz estimates (\cref{lemma:lipschitz_transformation,lemma:lipschitz_parameters,lemma:lipschitz_cell_functions}).
    \item[$(ii)$] We go on, in \cref{subsec:analysis_prescribed}, to look at the linearized problem by solving the heat problem for a prescribed height function and establishing a-priori estimates (\cref{lemma:wellposedness_given_h}).
    \item[$(iii)$] Finally, in \cref{subsec:analysis_fixed_point}, we employ a Schauder fixed-point argument to prove the existence of weak solutions (\cref{thm:existence}).
    We go on to show uniqueness (\cref{lemma:uniqueness}) and long-time existence (\cref{lemma:long_time}) when certain conditions are met.    
\end{enumerate}

\subsection{Hanzawa transformation and analysis of the cell problems}\label{subsec:hanzawa}
In the following, we present some auxiliary results regarding the Hanzawa transformation which are necessary to estimate and control the impact of the changing geometry on the overall system.
For more details regarding this type of setup, including the case of nonuniform growth, we refer to \cite{Pruss2016}.

As a $C^3$-interface, $\Gamma_0=\partial Y_0$ has a tubular neighborhood $U_\Gamma\subset\subset Y$ with tubular radius $a>0$.
In $U_\Gamma$, every point is uniquely characterized by its projection onto $\Gamma_0$ (via $P\colon U_\Gamma\to\Gamma_0$), and its signed distance (via $d\colon U_\Gamma\to(-a,a)$)\footnote{Here, we follow the convention that negative distances correspond to points inside $Y_0$.}.
Moreover, the function
\[
\Lambda\colon\Gamma_0\times(-a,a)\to U_\Gamma,\quad \Lambda(\gamma,h)=\gamma+hn(\gamma)
\]
is a $C^2$-isomorphism with inverse
\[
\Lambda^{-1}\colon U_\Gamma\to\Gamma_0\times(-a,a),\qquad \Lambda^{-1}(y)=(P(y),d(y)).
\]
We set $a^*=\nicefrac{a}{8}$. 
For $h\in[-a^*,a^*]$, we introduce
\[
\Gamma(h)=\left\{\gamma+h n(\gamma)\ : \ \gamma\in\Gamma_0\right\},\ Y(h)=\inside(\Gamma(h)).
\]
Clearly, $\Gamma(0)=\Gamma_0$, $Y(0)=Y_0$, as well as $\partial Y(h)=\Gamma(h)$.
In the following, let $\chi\colon\R\to[0,1]$ be a smooth cut-off function that satisfies
\begin{alignat*}{2}
(i)&\ \chi(r)=1 \ \text{for}\ r\in\left(-\frac{1}{2},\frac{1}{2}\right), &\qquad (ii)&\ \chi(r)=0 \ \text{for}\ r\notin(-1,1),\\
(iii)&\ \mathrm{sign}(r)\chi'(r)\leq0,&\quad (iv)&\ |\chi'(r)|\leq\frac{5}{2}
\end{alignat*}
and introduce $\psi\colon\overline{U_\Gamma}\to\R^d$ given by $\psi(y)=n(P(y))\chi\left(\nicefrac{d(y)}{a}\right)$.
Note that $\psi\in C^2(\overline{U})$ with $|D_y\psi(y)|<\nicefrac{4}{a}$.\footnote{This follows as the mean curvature is bounded by $a^{-1}$.}
Then, the so-called Hanzawa transform $s_h\colon\overline{Y}\to\overline{Y}$ defined via
\begin{equation}\label{hanzawa_transform}
     s_h(y)=\begin{cases}
     y+h\psi(y)\quad &y\in U_\Gamma,\\
     y\quad &y\in \overline{Y}\setminus U_\Gamma
     \end{cases}
\end{equation}
is a $C^2$-transformation satisfying $s_h(\Gamma)=\Gamma(h)$ and $|D_ys_h|, |D_ys_h^{-1}|\leq 2$ for all $h\in[-a^*,a^*]$.
Please note that due to $h$ being constant this is all a special case of the more general results for the Hanzawa transformation as outlined in \cite[Chapter 2]{Pruss2016}.
In the following, we denote the Jacobian and the Jacobian derivative by $F_h=D_ys_h$ and $J_h=\det F_h$, respectively.

\begin{lemma}\label{lemma:lipschitz_transformation}
    \begin{itemize}
        \item[$(i)$] There are $0<c<C$ independent of $h\in[-a^*,a^*]$ and $y\in \overline{Y}$ such that
        \[
        c\leq J_h(y)\leq C,\quad c|\xi|^2\leq F^{-1}_h(y)F^{-T}_h(y)\xi\cdot\xi\leq C|\xi|^2 \quad(\xi\in\R^d).
        \]
        \item[$(ii)$] Let $h_1,h_2\in [-a^*,a^*]$ and let $s_i=s_{h_i}$, $F_i=F_{h_i}$, $J_i=J_{h_i}$ denote the corresponding transformation functions.
        Then,
        \[
        \|F_1-F_2\|_{L^\infty(Y)^{d\times d}}+\|F_1^{-1}-F_2^{-1}\|_{L^\infty(Y)^{d\times d}}
        +\|J_1-J_2\|_{L^\infty(Y)}\leq C|h_1-h_2|
        \]
        where $C>0$ independent of $h$.
    \end{itemize}
\end{lemma}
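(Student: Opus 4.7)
The strategy rests on the fact that, thanks to the special structure of the Hanzawa transformation \eqref{hanzawa_transform}, everything is explicit and linear in $h$ inside the tubular neighborhood $U_\Gamma$ and trivial outside it.

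For part $(i)$, I would begin by writing $F_h(y) = I + h\, D_y\psi(y)$ on $U_\Gamma$ and $F_h(y)=I$ elsewhere. Combining the hypothesis $|h|\leq a^* = a/8$ with the estimate $|D_y\psi|\leq 4/a$ given just before \eqref{hanzawa_transform}, one obtains $\|h\, D_y\psi\|_{L^\infty(Y)^{d\times d}} \leq 1/2$. A Neumann series argument (or, equivalently, the singular value bounds $\sigma_i(F_h)\in[1/2,3/2]$ that follow from $F_h = I+E$ with $\|E\|\leq 1/2$) then gives the uniform bound $\|F_h\|,\|F_h^{-1}\| \leq 2$ on all of $\overline{Y}$, which is exactly the bound already stated in the paragraph preceding the lemma. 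From this the ellipticity bounds follow immediately: for every $\xi\in\R^d$ one has
\[
|F_h^{-T}\xi|^2 \leq \|F_h^{-T}\|^2|\xi|^2 \leq 4|\xi|^2,
\qquad
|\xi| = |F_h^{T}F_h^{-T}\xi| \leq \|F_h\|\,|F_h^{-T}\xi| \leq 2|F_h^{-T}\xi|,
\]
so that $\tfrac14|\xi|^2 \leq F_h^{-1}F_h^{-T}\xi\cdot\xi \leq 4|\xi|^2$. For the Jacobian determinant, the identity $|\det F_h|\leq \|F_h\|^d$ applied to both $F_h$ and $F_h^{-1}$ yields $2^{-d} \leq |J_h| \leq 2^d$; since $J_0\equiv 1$ and $h\mapsto J_h(y)$ is continuous and never zero, the sign is positive, so one can drop the absolute value.

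For part $(ii)$, the Lipschitz estimates are almost immediate from the explicit form. Since $F_{h_1}-F_{h_2} = (h_1-h_2)\,D_y\psi$ on $U_\Gamma$ and zero elsewhere, the bound on $\|F_1-F_2\|_{L^\infty}$ is direct with constant $4/a$. For the inverse, I use the standard resolvent-type identity
\[
F_1^{-1}-F_2^{-1} = F_1^{-1}(F_2-F_1)F_2^{-1},
\]
combined with the uniform bound $\|F_i^{-1}\|\leq 2$ from part $(i)$. For $J_1-J_2$, I would view $h\mapsto J_h$ as a polynomial of degree $d$ in the entries of $F_h$, whose coefficients remain bounded by part $(i)$; equivalently, invoking Jacobi's formula,
\[
\frac{d}{dh}J_h = J_h\,\mathrm{tr}\bigl(F_h^{-1}D_y\psi\bigr),
\]
which is uniformly bounded by a constant depending only on $a$ and $d$, so integration in $h$ between $h_2$ and $h_1$ produces the Lipschitz estimate.

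Honestly, there is no real obstacle here beyond bookkeeping: the cutoff $\chi$ was precisely engineered so that $|h\,D_y\psi|\leq 1/2$, and from that all quantitative estimates follow by elementary linear algebra. The one subtlety worth stating explicitly is that all constants may be chosen to depend only on $a$ and the dimension $d$, but not on $h$ or $y$, which is what makes these bounds useful for the later fixed-point argument in \cref{subsec:analysis_fixed_point}.
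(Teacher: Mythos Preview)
Your proposal is correct and follows essentially the same approach as the paper: exploit that $F_h=I+hD_y\psi$ with $\|hD_y\psi\|\leq 1/2$, and derive everything from this. The only cosmetic differences are that for $F_1^{-1}-F_2^{-1}$ you use the resolvent identity $F_1^{-1}(F_2-F_1)F_2^{-1}$ while the paper expands $F_i^{-1}=I-h_iD_y\psi\,F_i^{-1}$ and absorbs, and for $J_1-J_2$ you integrate Jacobi's formula while the paper writes out the Taylor expansion in $h$; both routes are equally short.
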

\begin{proof}
  $(i)$. These bounds exist as $|F_h(y)|, |F_h^{-1}(y)|\leq 2$ uniformly and independently of $h\in[-a^*,a^*]$.
  
  $(ii)$. Outside the tubular neighborhood $U_\Gamma$, the transformation is trivial, i.e., $F_i=\ID$ and $J_i=1$, so we only have to look at $y\in U$.
  We immediately get
  \[
  |F_1(y)-F_2(y)|=|\left(h_1-h_2\right)D_y\psi(y)|\leq C|h_1-h_2|.
  \]
  We already know that the $F_i$ are invertible with
  \[
  F_i^{-1}(y)=(\ID+h_iD_y\psi(y))^{-1}
  \]
  from where
  \[
  F_i^{-1}(y)=\ID-h_iD_y\psi(y)F_{i}^{-1}(y)
  \]
  follows.
  As a consequence,
  \[
  F_1^{-1}(y)-F_2^{-1}(y)=D_y\psi(y)\left(h_2F_2^{-1}(y)-h_1F_1^{-1}(y)\right)
  \]
  or,
  \[
  |F_1^{-1}(y)-F_2^{-1}(y)|\leq C|h_1-h_2|+|h_2D_y\psi(y)||F_1^{-1}(y)-F_2^{-1}(y)|.
  \]
  Since $|h_2D_y\psi(y)|\leq\nicefrac{1}{2}$, the Lipschitz estimate follows.
  Finally, for the determinant a second order Taylor approximation yields
  \[
  J_i(y)=\det(\ID+h_i D_y\psi(y))=1+h_i\left(\tr(D_y \psi(y))+\mathcal{O}(h_i)\right)
  \]
  showing the Lipschitz estimate.
\end{proof}

In the following, for any $h\in[-a^*,a^*]$, we note that
\[
S_h\colon H^1(Y\setminus Y_0)\to H^1(Y\setminus Y(h)), \qquad S_h^{-1}\colon H^1(Y\setminus Y(h))\to H^1(Y\setminus Y_0)).
\]
Based on the estimates in \cref{lemma:lipschitz_transformation}, there are constants $c,C>0$ such that
\begin{equation}\label{eq:equivalence_norms}
c\|S_h\phi\|_{H^1(Y\setminus Y(h))}\leq\|\phi\|_{H^1(Y\setminus Y_0)}\leq C\|S_h\phi\|_{H^1(Y\setminus Y(h))}\qquad (\phi\in H^1(Y\setminus Y_0)).
\end{equation}

In the next step, we establish how the different coefficients depend on the height parameter and the corresponding transformation.
To that end, let $h\in[-a^*,a^*]$ be given.
It is clear, that $h\mapsto|Y(h)|$ and $h\mapsto|\Gamma(h)|$ are $C^1$ and therefore also Lipschitz. 
We now look at the cell problem given via \cref{eq:system:cell_problem}:

\begin{lemma}\label{lemma:lipschitz_cell_functions}
    For every $h\in[-a^*,a^*]$, there is a unique zero-average solution $\xi_{j,h}\in H_\#(Y\setminus Y(h))$, $j=1,\dots,d$, satisfying    
    \begin{alignat*}{2}
    \int_{Y\setminus Y(h)}\mathcal{K}(\nabla_y\xi_{j,h}+e_j)\cdot\nabla_y\phi\di{y}=0
    \end{alignat*}
    for all $\phi\in H^1_\#(Y\setminus Y(h))$ as well as $\|\nabla\xi_{j,h}\|_{L^2(Y\setminus Y(h))}\leq C$ where $C>0$ is independent of $h$.
    
    Moreover, for $h_1,h_2\in[-a^*,a^*]$, we have
    \[
    \left|\int_{Y\setminus Y(h_1)}\mathcal{K}\nabla\xi_{j,h_1}\cdot e_i\di{y}-\int_{Y\setminus Y(h_2)}\mathcal{K}\nabla\xi_{j,h_2}\cdot e_i\di{y}\right|\leq C |h_1-h_2|.
    \]
\end{lemma}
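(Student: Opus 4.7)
The proof proceeds in three stages: (i) existence/uniqueness together with the uniform $H^1$-bound on the cell functions, and (ii)--(iii) the Lipschitz estimate, which is established by passing to the reference domain via the Hanzawa transformation.

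For stage (i), fix $h \in [-a^*,a^*]$ and work on the closed subspace $V_h := \{v \in H^1_\#(Y\setminus Y(h)) : \int_{Y\setminus Y(h)} v\,dy = 0\}$. The bilinear form $a_h(u,v) := \int_{Y\setminus Y(h)} \mathcal{K}\nabla u\cdot\nabla v\,dy$ is continuous, and coercive on $V_h$ by (A1) combined with the Poincar\'e--Wirtinger inequality. The crucial observation is that, thanks to the norm equivalence \eqref{eq:equivalence_norms} and the fact that $s_h$ maps zero-mean functions into functions with $L^\infty$-bounded mean, the Poincar\'e--Wirtinger constant on $V_h$ is uniform in $h$. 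Lax--Milgram then yields a unique $\xi_{j,h}\in V_h$, and testing the cell problem against $\xi_{j,h}$ itself gives $c\|\nabla\xi_{j,h}\|^2_{L^2} \leq |\mathcal{K}e_j|\,|Y\setminus Y(h)|^{1/2}\,\|\nabla\xi_{j,h}\|_{L^2}$, so that $\|\nabla\xi_{j,h}\|_{L^2(Y\setminus Y(h))}\leq C$ uniformly in $h$.

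For stage (ii), transport the cell problem onto the fixed domain $Y\setminus Y_0$ by setting $\tilde\xi_{j,h} := \xi_{j,h}\circ s_h \in H^1_\#(Y\setminus Y_0)$. A standard change of variables turns the cell problem into
\[
\int_{Y\setminus Y_0} \mathcal{K}_h \nabla\tilde\xi_{j,h}\cdot \nabla\tilde\phi\,dy = -\int_{Y\setminus Y_0} J_h F_h^{-1}\mathcal{K}e_j\cdot \nabla\tilde\phi\,dy \qquad \forall\,\tilde\phi \in H^1_\#(Y\setminus Y_0),
\]
with $\mathcal{K}_h := J_h F_h^{-1}\mathcal{K}F_h^{-T}$. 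By \cref{lemma:lipschitz_transformation}(i), $\mathcal{K}_h$ is uniformly elliptic and bounded; by \cref{lemma:lipschitz_transformation}(ii), both $h\mapsto \mathcal{K}_h$ and $h\mapsto J_hF_h^{-1}\mathcal{K}$ are Lipschitz in $L^\infty(Y\setminus Y_0)$. Subtracting the equations for $h_1$ and $h_2$ and testing against the zero-mean representative of $\tilde\xi_{j,h_1}-\tilde\xi_{j,h_2}$, one rearranges to obtain
\[
\int_{Y\setminus Y_0} \mathcal{K}_{h_2}\nabla(\tilde\xi_{j,h_1}-\tilde\xi_{j,h_2})\cdot \nabla(\tilde\xi_{j,h_1}-\tilde\xi_{j,h_2})\,dy = \int_{Y\setminus Y_0} (\mathcal{K}_{h_2}-\mathcal{K}_{h_1})\nabla\tilde\xi_{j,h_1}\cdot \nabla(\tilde\xi_{j,h_1}-\tilde\xi_{j,h_2})\,dy + (\text{analogous RHS piece}),
\]
which, together with uniform coercivity, the $L^\infty$-Lipschitz estimates from \cref{lemma:lipschitz_transformation}(ii), and the uniform bound $\|\nabla\tilde\xi_{j,h}\|_{L^2}\leq C$ from stage (i), yields $\|\nabla(\tilde\xi_{j,h_1}-\tilde\xi_{j,h_2})\|_{L^2(Y\setminus Y_0)} \leq C|h_1-h_2|$.

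For stage (iii), transform the quantity of interest to the reference domain,
\[
\int_{Y\setminus Y(h)}\mathcal{K}\nabla\xi_{j,h}\cdot e_i\,dy = \int_{Y\setminus Y_0} J_h \mathcal{K}F_h^{-T}\nabla\tilde\xi_{j,h}\cdot e_i\,dy,
\]
and split
\[
\Bigl(J_{h_1}\mathcal{K}F_{h_1}^{-T}\nabla\tilde\xi_{j,h_1} - J_{h_2}\mathcal{K}F_{h_2}^{-T}\nabla\tilde\xi_{j,h_2}\Bigr)\cdot e_i = \bigl(J_{h_1}\mathcal{K}F_{h_1}^{-T} - J_{h_2}\mathcal{K}F_{h_2}^{-T}\bigr)\nabla\tilde\xi_{j,h_1}\cdot e_i + J_{h_2}\mathcal{K}F_{h_2}^{-T}\nabla(\tilde\xi_{j,h_1}-\tilde\xi_{j,h_2})\cdot e_i.
\]
The first term is controlled by $C|h_1-h_2|\,\|\nabla\tilde\xi_{j,h_1}\|_{L^2}$ via \cref{lemma:lipschitz_transformation}(ii) and stage (i); the second by $C\,\|\nabla(\tilde\xi_{j,h_1}-\tilde\xi_{j,h_2})\|_{L^2}$ via \cref{lemma:lipschitz_transformation}(i) and stage (ii). Both are $\leq C|h_1-h_2|$, proving the claim.

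The main obstacle is stage (ii): the nonlinear dependence of $\mathcal{K}_h$ on $h$ must be handled by a careful add/subtract with $\mathcal{K}_{h_2}\nabla\tilde\xi_{j,h_1}$ so that the factor $(\mathcal{K}_{h_1}-\mathcal{K}_{h_2})$ multiplies a quantity already known to be bounded, while the factor $\nabla(\tilde\xi_{j,h_1}-\tilde\xi_{j,h_2})$ is absorbed via coercivity. Without this pairing, one cannot close the estimate using only the bounds of \cref{lemma:lipschitz_transformation}.
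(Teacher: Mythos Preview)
Your proof is correct and follows essentially the same strategy as the paper's: transform the cell problem to the fixed reference domain $Y\setminus Y_0$, subtract, test with the difference, and combine the uniform ellipticity and Lipschitz estimates from \cref{lemma:lipschitz_transformation} to first bound $\|\nabla(\tilde\xi_{j,h_1}-\tilde\xi_{j,h_2})\|_{L^2}$ and then the integral in question. The only cosmetic differences are that the paper pivots on $\mathcal{K}_{h_1}$ rather than $\mathcal{K}_{h_2}$ in the add/subtract step, and that it integrates the $e_j$ source term by parts to obtain a boundary integral $\int_{\Gamma_0}(J_1-J_2)e_j\cdot n\,\bar\xi_j\,d\sigma$ (estimated via the trace inequality and Poincar\'e) instead of keeping it as the volume term $-\int(J_{h_1}F_{h_1}^{-1}-J_{h_2}F_{h_2}^{-1})\mathcal{K}e_j\cdot\nabla\bar\xi_j\,dy$ as you do; both routes close identically.
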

\begin{proof}
    The existence of a unique zero-average solution is standard and follows via Lax-Milgram \cite[Theorem 2.2]{S96}.
    Let $h\in[-a^*,a^*]$.
    By testing with $\varphi = \xi_{j,h}$ and by assumption (A1) we obtain the desired estimate
    \begin{equation*}
        c \|\nabla\xi_{j,h}\|_{L^2(Y\setminus Y(h))}^2 \leq C \|e_j\|_{L^2(Y\setminus Y(h))} \|\nabla\xi_{j,h}\|_{L^2(Y\setminus Y(h))} = C \underbrace{\sqrt{|Y\setminus Y(h)|}}_{\leq 1}\|\nabla\xi_{j,h}\|_{L^2(Y\setminus Y(h))}. 
    \end{equation*}

    Now, let $h_1,h_2\in[-a^*,a^*]$, take the difference of their respective weak forms, and test with $\bar{\xi}_{j}:=S_1^{-1}\xi_{j,h_1}-S_2^{-1}\xi_{j,h_2}$:
    \begin{multline*}
      \int_{Y\setminus Y_0}J_1F_1^{-1}\mathcal{K}F^{-T}_1\nabla_y\bar{\xi}_{j}\cdot \nabla_y\bar{\xi}_{j}\\
      =\int_{\Gamma_0}\left(J_1-J_2\right)e_j\cdot n\bar{\xi}_{j}\di{y}
      +\int_{Y\setminus Y_0}\left(J_2F_2^{-1}\mathcal{K}F^{-T}_2-J_1F_1^{-1}\mathcal{K}F^{-T}_1\right)\nabla_y(S_2^{-1}\xi_{j,h_2})\cdot\nabla_y\bar{\xi}_{j}\di{y}.
    \end{multline*}
    With the uniform positivity of $J_1F^{-T}_1$ and the Lipschitz continuity of $J_h$ and $F_h^{-T}$ (\cref{lemma:lipschitz_transformation}), we estimate
    \begin{equation*}
    c\|\nabla_y\bar{\xi}_{j}\|^2_{L^2(Y\setminus Y_0)}\\
    \leq C|h_1-h_2|\left(\|\bar{\xi}_{j}\|_{L^1(\Gamma_0)}+\|\nabla_y(S_2^{-1}\xi_{j,h_2})\|_{L^2(Y\setminus Y_0)}\|\nabla_y\bar{\xi}_{j}\|_{L^2(Y\setminus Y_0)}\right).
    \end{equation*}
    Via Poincare's inequality and (\ref{eq:equivalence_norms}), we therefore get
    \begin{equation}\label{eq:lipschitz_xi_h}
    \|\bar{\xi}_{j}\|_{H^1(Y\setminus Y_0)}
    \leq C|h_1-h_2|.
    \end{equation}
    From here, we can conclude the desired Lipschitz estimate via
    \begin{multline*}
    \int_{Y\setminus Y(h_1)}\mathcal{K}\nabla\xi_{j,h_1}\cdot e_i\di{y}
    -\int_{Y\setminus Y(h_2)}\mathcal{K}\nabla\xi_{j,h_2}\cdot e_i\di{y}\\
    =
    \int_{Y\setminus Y_0}\left(J_1F_1^{-1}\mathcal{K} F^{-T}_1\nabla_y(S_1\xi_{j,h_1})-J_2F_2^{-1}\mathcal{K} F^{-T}_2\nabla_y(S_2\xi_{j,h_2})\right)\cdot e_i\di{y}
    \end{multline*}
    and \cref{lemma:lipschitz_transformation}.
\end{proof}

\begin{lemma}\label{lemma:lipschitz_parameters}
\begin{itemize}
    \item[(i)]  Let $h_1,h_2\in[-a^*,a^*]$.
    There is a $C>0$ independent of $h_1,h_2$ such that
    \[
    |C(h_1)-C(h_2)|+|K(h_1)-K(h_2)|+| L(h_1)- L(h_2)|\leq C|h_1-h_2|.
    \]
    \item[(ii)]
    There is $c>0$ such that, for all $h\in[-a^*,a^*]$ and all $\rho\in\R^d$,
    \[
    |C(h)|\geq c,\quad K(h)\rho\cdot\rho\geq c|\rho|^2.
    \]
\end{itemize}
\end{lemma}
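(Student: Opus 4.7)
The plan is to reduce every claim to the building blocks already established in \cref{lemma:lipschitz_transformation,lemma:lipschitz_cell_functions}. The three Lipschitz estimates in part (i) I would address in order $C$, $L$, $K$, and then settle part (ii) by pointwise positivity plus a continuity/compactness argument on the interval $[-a^*,a^*]$.

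For $C(h)=1-|Y(h)|$, pulling back the volume integral under the Hanzawa transformation gives
\[
|Y(h)|=\int_{Y(h)}1\di{y}=\int_{Y_0}J_h(y)\di{y},
\]
so that $|C(h_1)-C(h_2)|\leq\|J_{h_1}-J_{h_2}\|_{L^\infty(Y)}\leq C|h_1-h_2|$ directly by \cref{lemma:lipschitz_transformation}. The same trick handles $L(h)=|\Gamma(h)|$ by pulling back to $\Gamma_0$ with the surface Jacobian $J_h\,|F_h^{-T}n|$, whose Lipschitz continuity in $h$ is immediate from the same lemma. For $K(h)$, I would first use symmetry: testing the cell equation for $\xi_{j,h}$ with $\xi_{i,h}$ gives $\int_{Y\setminus Y(h)}\mathcal{K}(\nabla_y\xi_{j,h}+e_j)\cdot\nabla_y\xi_{i,h}\di{y}=0$, hence
\[
K(h)_{ij}=\int_{Y\setminus Y(h)}\mathcal{K}\nabla_y\xi_{j,h}\cdot e_i\di{y}+|Y\setminus Y(h)|\,(\mathcal{K}e_j\cdot e_i).
\]
The first term is Lipschitz by \cref{lemma:lipschitz_cell_functions} and the second by the bound already obtained for $C$.

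For part (ii), the $C$ estimate is already uniform: since $Y(h)\subset Y_0\cup U_\Gamma\subset\subset Y$ for every admissible $h$, one has $|Y(h)|\leq|Y_0\cup U_\Gamma|<1$ uniformly, hence $C(h)\geq c>0$. For the ellipticity of $K(h)$, I would use the minimum characterization
\[
K(h)\rho\cdot\rho=\min_{v\in H^1_\#(Y\setminus Y(h))}\int_{Y\setminus Y(h)}\mathcal{K}(\nabla_y v+\rho)\cdot(\nabla_y v+\rho)\di{y}
\]
together with positivity of $\mathcal{K}$ (Assumption \textbf{(A1)}) to reduce to showing $\min_v\int_{Y\setminus Y(h)}|\nabla_y v+\rho|^2\di{y}\geq c|\rho|^2$ uniformly in $h$. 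For each individual $h$, if this minimum vanished for some $\rho\neq 0$, then the periodic $v$ would satisfy $\nabla_y v=-\rho$ on the (torus-)connected set $Y\setminus Y(h)$; integrating along a closed loop winding around the torus inside $Y\setminus Y(h)$ then forces $\rho=0$, a contradiction. Pulling the functional back to the fixed reference domain $Y\setminus Y_0$ via $S_h^{-1}$ turns this into a minimization with $h$-continuous, uniformly elliptic coefficients (by \cref{lemma:lipschitz_transformation}(i)), so the minimum depends continuously on $h$ and hence attains a strictly positive lower bound on the compact set $[-a^*,a^*]\times\{\rho\in\R^d:|\rho|=1\}$. The main obstacle is exactly this uniformity-in-$h$ for the ellipticity of $K$; once the pull-back is set up, everything else follows from the already-established Hanzawa estimates.
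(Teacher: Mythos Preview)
Your proposal is correct and, for part (i), matches the paper's proof almost exactly: the same pullback of $|Y(h)|$ and $|\Gamma(h)|$ via the Hanzawa transformation together with \cref{lemma:lipschitz_transformation}, and the identical splitting of $K(h)_{ij}$ into the cell-solution part (handled by \cref{lemma:lipschitz_cell_functions}) and the volume part $\int_{Y\setminus Y(h)}\mathcal{K}_{ij}\di{y}$.

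The only noteworthy difference is in part (ii) for the ellipticity of $K$. The paper simply cites a standard reference for pointwise ellipticity at each fixed $h$ and then invokes the Lipschitz continuity of $h\mapsto K(h)$ just established in (i) together with compactness of $[-a^*,a^*]$ to pass to a uniform lower bound $c=\min_{|h|\leq a^*}c_h$. You instead work with the variational characterization of $K(h)\rho\cdot\rho$, give a self-contained topological argument (periodic loop in the connected complement $Y\setminus Y(h)$) for pointwise positivity, and obtain uniformity by pulling the minimization back to the fixed domain $Y\setminus Y_0$. Both routes are valid; the paper's is more economical since it reuses (i) directly, while yours avoids an external reference and makes the mechanism behind ellipticity explicit. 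Your pullback step for uniformity is correct but slightly heavier than needed: once you have (i), continuity of the smallest eigenvalue of $K(h)$ already gives the uniform bound without revisiting the cell problem.
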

\begin{proof}
    $(i)$. This is easy to check for $C$ and $ L$ by directly utilizing the transformation \cref{hanzawa_transform} and the Lipschitz estimates in \cref{lemma:lipschitz_transformation}.

    For the heat conductivity, we rely on additional estimates for the cell problem given in \cref{lemma:lipschitz_cell_functions}:
    Their difference can be estimated by
    \begin{multline*}
    K(h_1)_{i,j}-K(h_2)_{i,j}=\int_{Y\setminus Y(h_1)}\mathcal{K}\nabla_y\xi_{j,h_1}\cdot e_i\di{y}
    -\int_{Y\setminus Y(h_2)}\mathcal{K}\nabla_y\xi_{j,h_2}\cdot e_i\di{y}\\
    +\int_{Y\setminus Y(h_1)}\mathcal{K}_{ij}\di{y}
    -\int_{Y\setminus Y(h_2)}\mathcal{K}_{ij}\di{y}
    \end{multline*}
    where we can apply \cref{lemma:lipschitz_transformation,lemma:lipschitz_cell_functions} to get the Lipschitz estimate.

    $(ii)$. Trivially true for $C(h)=1-|Y(h)|$ as $\dist(\Gamma_0,\partial Y)\geq 2a^*$.
    For the macroscopic conductivity matrix, positivity can be shown easily for a fixed $h$ with standard techniques (e.g., \cite[Theorem 12.5]{stuartpavliotis}, i.e., $K(h)\rho\cdot\rho\geq c_h|\rho|^2$.
    Since $K$ is continuous in $h$ (shown in $(i)$) and $[-a^*,a^*]$ is compact, uniform positivity follows with $c=\min_{|h|\leq a^*}c_h>0$.    
\end{proof}

\subsection{Analysis for prescribed height function}\label{subsec:analysis_prescribed}
We take a look at the weak form for the heat system, i.e., \cref{eq:weak_form_reference}, for a prescribed height function $h$ and establish well-posedness and some further auxiliary results for this linearized problem.
For some arbitrary and later to be fixed $M>0$, we introduce the space of admissible height functions whose time derivatives are bounded by $M$:
\[
\mathcal{H}_M(t)=\{h\in W^{1,\infty}((0,t); L^\infty(\Omega))\ : \ h(0,x)=0,\ |\partial_th(\tau,x)|\leq M \ \text{for a.a. $\tau\in(0,t)$ and $x\in\Omega$}\}.
\]
Please note that every $h\in\mathcal{H}_M(t)$ satisfies $|h|\leq a^*$ as long as $t\leq \nicefrac{a^*}{M}=:T_M$; therefore the results from \cref{subsec:hanzawa} apply pointwise almost everywhere.
We set $S_M=(0,T_M)$.
\begin{lemma}[Existence for prescribed height functions]\label{lemma:wellposedness_given_h}
Let $M>0$ and $h\in \mathcal{H}_M(T_M)$.
There exists a unique weak solution $(\Theta,\vartheta)\in \mathcal{X}_0$ with $\partial_t(\Theta,\vartheta)\in L^2(S_M\times\Omega)\times L^2(S_M\times\Omega\times Y_0)$ satisfying
\begin{multline*}
    \int_{\Omega}\partial_t(C(h)\Theta)\Phi\di{x}+\int_\Omega\int_{Y_0}\partial_t(c(h)\vartheta)\phi\di{y}\di{x}+\int_{\Omega}K(h)\nabla\Theta\cdot\nabla\Phi\di{x}
    +\int_\Omega\int_{Y_0}\kappa(h)\nabla_y\vartheta\cdot\nabla_y\phi\di{y}\di{x}\\
    +\int_\Omega\int_{Y_0}\vartheta v(h)\cdot\nabla\phi\di{y}\di{x}
    =\int_{\Omega}\left(F- L(h)\partial_th\right)\Phi\di{x}
    +\int_\Omega\int_{Y_0}f(h)\phi\di{y}\di{x}
\end{multline*}
for all $(\Phi,\phi)\in \mathcal{X}_0$ and almost all $t\in S_M$.
In addition, there is $C>0$, which is independent of $h$ and $M$, such that
\begin{equation*}
\|\Theta\|_{L^\infty(S_M;L^2(\Omega))}+\|\Theta\|_{L^2(S_M;H^1(\Omega))}+\|\vartheta\|_{L^\infty(S_M;L^2(\Omega\times Y_0))}+\|\vartheta\|_{L^2(S_M\times\Omega;H^1(Y_0))}
\leq C\exp\left(CM\right)
\end{equation*}
Moreover, there is $C>0$ such that for all $h\in\mathcal{H}_M(T_M)$:
\[
\|\Theta\|_{L^\infty((0,t)\times\Omega)} + \|\vartheta\|_{L^\infty((0,t)\times\Omega\times Y_0)}\leq C\left(1+M^2\sqrt{t}\right)
\]
\end{lemma}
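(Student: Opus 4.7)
My plan is to address this in three stages: existence and uniqueness via a Faedo--Galerkin scheme, the $L^2$ energy estimate that yields the $\exp(CM)$ bound, and finally the $L^\infty$ bound via a level-set truncation.

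For existence, I would view $\mathcal{X}_0$ as a closed Hilbert subspace (enforced by the trace condition $\vartheta=\Theta$ on $\Gamma_0$) of $L^2(S_M;H^1(\Omega))\times L^2(S_M\times\Omega;H^1(Y_0))$ and set up a Faedo--Galerkin scheme on a countable dense family $\{(\Phi_k,\phi_k)\}_k\subset\mathcal{X}_0$. By \cref{lemma:lipschitz_transformation,lemma:lipschitz_parameters,lemma:lipschitz_cell_functions}, the coefficients $C(h),c(h),K(h),\kappa(h),L(h),v(h),f(h)$ are uniformly bounded and measurable in time, so the resulting finite-dimensional ODE system for the Galerkin coefficients has a global-in-$S_M$ Carathéodory solution. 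The a priori estimate from the next step is uniform in the Galerkin dimension, so passage to the limit is carried out by weak-$\ast$ compactness in the energy spaces together with Aubin--Lions for strong $L^2$ compactness (needed because the coefficient functions depend nonlinearly on $h$). Uniqueness is then immediate by linearity, tested on the difference of two solutions with zero data.

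For the energy estimate, I would test with $(\Phi,\phi)=(\Theta,\vartheta)$, which is admissible in $\mathcal{X}_0$. The time-derivative terms reorganize via
\[
\int_\Omega \partial_t(C(h)\Theta)\Theta\di{x} = \tfrac12\frac{d}{dt}\int_\Omega C(h)\Theta^2\di{x}+\tfrac12\int_\Omega \partial_t C(h)\,\Theta^2\di{x},
\]
with an analogous identity for the micro term, and coercivity of $K(h)$ (\cref{lemma:lipschitz_parameters}(ii)) together with the uniform positivity of $\kappa(h)=J_hF_h^{-1}\kappa F_h^{-T}$ from \cref{lemma:lipschitz_transformation}(i) bounds the gradient norms from below. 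The critical observation is that $|\partial_t C(h)|,|\partial_t c(h)|,|L(h)\partial_t h|\leq CM$ and $|v(h)|=|J_hF_h^{-1}\partial_t s_h|\leq CM$, so applying Young's inequality to the drift term $\int_\Omega\!\int_{Y_0}\vartheta\, v(h)\cdot\nabla_y\vartheta$ produces a $CM^2\|\vartheta\|_{L^2}^2$ contribution after absorbing $\delta\|\nabla_y\vartheta\|_{L^2}^2$ into the coercive estimate. Gronwall's lemma over the interval of length $T_M=a^*/M$ yields the prefactor $\exp(CM^2 T_M)=\exp(Ca^*M)$, which matches the stated $\exp(CM)$; the $L^2(S_M;H^1)$ part of the bound follows by then integrating the coercive terms in time.

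For the $L^\infty$ bound, I would execute a Stampacchia-type truncation argument on the coupled pair. For a level $k\geq k_0:=\max(\|\Theta_0\|_\infty,\|\vartheta_0\|_\infty,|\Theta^{ref}|)$, I test the weak form with $(\Phi,\phi)=((\Theta-k)_+,(\vartheta-k)_+)$, which still lies in $\mathcal{X}_0$ precisely because truncation commutes with the trace coupling $\vartheta=\Theta$ on $\Gamma_0$. A variant of the energy computation (accounting for the extra $k\,\partial_t C(h)(\Theta-k)_+$ type terms) yields a Gronwall-type differential inequality for
\[
A_k(t):=\|(\Theta-k)_+\|_{L^2(\Omega)}^2+\|(\vartheta-k)_+\|_{L^2(\Omega\times Y_0)}^2,
\]
in which the drift contributes the $CM^2$ factor as in the previous step, and iteration over the level parameter $k$ in the De Giorgi/Moser style upgrades the $L^2$ control to an $L^\infty$ bound on the positive part; the negative parts are handled symmetrically. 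The explicit growth $1+M^2\sqrt{t}$ emerges by tracking constants: the $M^2$ factor from the drift and the $\sqrt{t}$ from a Cauchy--Schwarz estimate of the time-integrated forcing terms (bounded by $C(1+M)$) against $|\{\Theta>k\}|^{1/2}$-type level set measure factors. I expect this coupled level-set iteration to be the main obstacle, since it must be run simultaneously on both scales while respecting the trace compatibility; the commutation of truncation with the trace observed above is what makes this tractable.
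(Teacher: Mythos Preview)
Your approach is essentially the same as the paper's: the energy estimate via testing with $(\Theta,\vartheta)$ and Gronwall, and the $L^\infty$ bound via level-set truncation and De Giorgi--Stampacchia iteration (the paper invokes \cite[Chapter~2, \S6, Theorem~6.1]{Ladyzenskaja}) both match. The only notable difference is that for existence the paper does not run a Galerkin scheme but directly cites an abstract result for linear implicit evolution equations $\tfrac{d}{dt}(\mathcal{B}(t)u)+\mathcal{A}(t)u=f$ from \cite[Chapter~III, Propositions~3.2 \& 3.3]{S96}; note also that since $h$ is prescribed the problem is genuinely linear in $(\Theta,\vartheta)$, so your Aubin--Lions step is unnecessary---weak convergence of the Galerkin approximants already suffices to pass to the limit.
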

\begin{proof}
    For any such height function, all coefficients are well-defined and essentially bounded.
    Moreover, $c(h)$ and $C(h)$ are positive and $\kappa(h)$ and $K(h)$ are positive definite (see \cref{lemma:lipschitz_transformation,lemma:lipschitz_parameters}).
    As a consequence, this is a linear implicit evolution equation in the form of
    \[
    \ddt(\mathcal{B}(t)u(t))+\mathcal{A}(t)u(t)=f(t)
    \]
    which under the given regularity admits a unique solution.
    We refer to \cite[Chapter III, Propositions 3.2 \& 3.3]{S96}.

    Regarding the estimates: First of all, the advective velocity $v(h)=J_hF_h^{-1}\partial_ts_h$ and the time derivatives of the coefficients $c(h)$ and $C(h)$ can be estimated by
    \[
    |v(h)|+|\partial_tc(h)|+|\partial_tC(h)|\leq C|\partial_th|\leq CM.
    \]
    Now, testing with $(\Phi,\phi)=(\Theta,\vartheta)$ shows:
    \begin{multline*}
    \ddt\int_{\Omega}\frac{1}{2}C(h)|\Theta|^2\di{x}
    +\ddt\int_{\Omega}\int_{Y_0}\frac{1}{2}c(h)|\vartheta|^2\di{x}
    +c\|\nabla\Theta\|^2_{L^2(\Omega)}
    +c\|\nabla_y\vartheta\|^2_{L^2(\Omega\times Y_0)}\\
    \leq C\int_\Omega M\left(|\Theta|+|\Theta|^2+\int_{Y_0}|\vartheta|^2+|\vartheta\nabla_y\vartheta|\di{y}\right)\di{x}
    +\|F\|_{L^2(\Omega)}\|\Theta\|_{L^2(\Omega)}
    +\|f(h)\|_{L^2(\Omega\times Y_0)}\|\vartheta\|_{L^2(\Omega\times Y_0)}
    \end{multline*}
    for almost all $t\in S_M$.
    Integrating over $(0,t)$ for $t\in S_M$ and using standard energy techniques, we get 
    \begin{multline*}
    \|\Theta(t)\|_{L^2(\Omega)}^2
    +\|\vartheta(t)\|_{L^2(\Omega\times Y_0)}^2
    +\int_0^t\|\nabla\Theta\|^2_{L^2(\Omega)}
    +\|\nabla_y\vartheta\|^2_{L^2(\Omega\times Y_0)}\di{\tau}
    \\
    \leq C\int_0^t\int_\Omega M\left(|\Theta|+|\Theta|^2+(1+M)\int_{Y_0}|\vartheta|^2\di{y}\right)\di{x}\di{\tau}
    \\
    +\int_0^t\|F\|_{L^2(\Omega)}\|\Theta\|_{L^2(\Omega)}
    +\|f(h)\|_{L^2(\Omega\times Y_0)}\|\vartheta\|_{L^2(\Omega\times Y_0)}\di{\tau}.
    \end{multline*}
    On the first term on the right hand side, we can further estimate:
    \[
    \int_0^t\int_\Omega|\partial_th|\left(|\Theta|+|\Theta|^2\right)\di{x}\di{\tau}\leq
    C\int_0^tM\left(\|\Theta\|^2_{L^2(\Omega)}+ 1 \right)\di{\tau}
    \]
    As a result, we arrive at
    \begin{multline*}
    \|\Theta(t)\|_{L^2(\Omega)}^2
    +\|\vartheta(t)\|_{L^2(\Omega\times Y_0)}^2
    +\int_0^t\|\nabla\Theta\|^2_{L^2(\Omega)}
    +\|\nabla_y\vartheta\|^2_{L^2(\Omega\times Y_0)}\di{\tau}
    \\
    \leq C\left(1+ \int_0^t\left(1+M+M^2\right)\left(\|\Theta\|^2_{L^2(\Omega)}+\|\vartheta\|_{L^2(\Omega\times Y_0)}^2\right)\di{\tau}\right).
    \end{multline*}
    With Grönwalls inequality, we are led to 
    \begin{multline*}
    \|\Theta\|_{L^\infty(0,t;L^2(\Omega))}^2
    +\|\vartheta\|_{L^\infty(0,t;L^2(\Omega\times Y_0))}^2
    +\|\nabla\Theta\|^2_{L^2(0,t;L^2(\Omega))}
    +\|\nabla_y\vartheta\|^2_{L^2(0,t;L^2(\Omega\times Y_0))}\\
    \leq C\exp\left(CtM^2\right).
    \end{multline*}
    %

    Regarding the boundedness of the solution, for $k\in\N$ with $k\geq\max\{\|\Theta_0\|_{L^\infty(\Omega)},\|\theta_0\|_{L^\infty(\Omega\times Y_0)}\}$, we introduce the functions $\Theta^k\in L^2(S_M;H^1(\Omega))$ and $\vartheta^k\in L^2(S_M\times\Omega;H^1(Y_0))$ via
    \[
    \Theta^k=\begin{cases}
            \Theta-k &\text{for}\ \Theta> k\\
            0&\text{for}\ |\Theta|\leq k\\
            \Theta+k &\text{for}\ \Theta< -k
            \end{cases},\qquad    \vartheta^k=\begin{cases}
            \vartheta-k &\text{for}\ \vartheta> k\\
            0&\text{for}\ |\vartheta|\leq k\\
            \vartheta+k &\text{for}\ \vartheta< -k
            \end{cases},\quad
    \]
    as well as the sets
    \begin{align*}
    A_\Theta(t,k)&=\{x\in \Omega\ : \ |\Theta(\tau,x)|\geq k\},\\ A_\vartheta(t,x,k)&=\{y\in  Y_0\ : \ |\vartheta(\tau,x,y)|\geq k\}.
    \end{align*}
    Please note that $\partial_t\Theta^k=\chi_{A_\Theta}\partial_t\Theta$, $\nabla\Theta^k=\chi_{A_\Theta}\nabla\Theta$, and $\Theta\Theta^k\geq|\Theta^k|^2$ (similar for $\vartheta$).
    With $(\Theta^k,\vartheta^k)$ as a test function and the \cref{lemma:lipschitz_transformation,lemma:lipschitz_parameters}, standard energy estimates lead to ($t\in S_M$)
    \begin{multline*}
    \|\Theta^k(t)\|_{L^2(\Omega)}^2+\|\vartheta^k(t)\|^2_{L^2(\Omega\times Y_0)}
    +\|\nabla\Theta^k\|^2_{L^2((0,t)\times\Omega)}
    +\|\nabla_y\vartheta^k\|^2_{L^2((0,t)\times\Omega\times Y_0)}\\
    \leq C\int_0^t\int_\Omega\left(|\partial_th||\Theta||\Theta^k|+\left|F- L(h)\partial_th\right||\Theta^k|\right)\di{x}\di{\tau}\\
    +C\int_0^t\int_\Omega\int_{Y_0}|\partial_th||\vartheta||\vartheta^k|+|\vartheta| |v(h)||\nabla_y\vartheta^k|
    +|f(h)||\vartheta^k|\di{y}\di{x}\di{\tau}
    \end{multline*}
    where $C>0$ does not depend on $h$, $k$, or $M$.
    Using $|\Theta|\leq k+|\Theta^k|$, $|\vartheta|\leq k+|\vartheta^k|$, and $|\partial_th|\leq M$, we estimate
    \begin{align*}
    \int_\Omega|\partial_th||\Theta||\Theta^k|\di{x}&\leq 2M\left(\|\Theta^k\|_{L^2(\Omega)}^2+k^2|A_\Theta(t,k)|\right),\\
    \int_{Y_0}|\partial_th||\vartheta||\vartheta^k|\di{y}&\leq 2M\left(\|\vartheta^k\|_{L^2(Y_0)}^2+k^2|A_\vartheta(t,x,k)|\right).
    \end{align*}
    The latent heat term, we estimate via (with $k\geq\max_{h\in[-a^*,a^*]}| L(h)|$)
    \[
    \int_\Omega| L(h)\partial_th||\Theta^k|\di{x}\leq \frac{M}{2}\left(\|\Theta^k\|_{L^2(\Omega)}^2+k^2|A_\Theta(t,k)|\right).
    \]
    Looking at the convection term ($v(h)\leq CM$):
    \[
    \int_{Y_0}|\vartheta| |v(h)||\nabla_y\vartheta^k|\di{y}
    \leq \delta\|\nabla_y\vartheta^k\|_{L^2(Y_0)}^2+\frac{M^2}{4\delta} \left(\|\vartheta^k\|_{L^2(Y_0)}^2+k^2|A_\vartheta(\tau,x,k)|\right).
    \]
    for any $\delta>0$.
    We set
    \[
    k^*=\max\left\{\|\Theta_0\|_{L^\infty(\Omega)},\|\theta_0\|_{L^\infty(\Omega\times Y_0)},\|F\|_{L^\infty((0,T_M)\times\Omega)}, \|f\|_{L^\infty((0,T_M)\times\Omega\times Y_0)},\max_{h\in[-a,a]}| L(h)|\right\}
    \]
    and conclude, for all $k\geq k^*$ and assuming $M\geq1$,
    \begin{multline*}
    \|\Theta^k(t)\|^2_{L^2(\Omega)}+\|\vartheta^k(t)\|^2_{L^2(\Omega\times Y_0)}
    +\|\nabla\Theta^k\|^2_{L^2((0,t)\times\Omega)}
    +\|\nabla_y\vartheta^k\|^2_{L^2((0,t)\times\Omega\times Y_0)}\\
    \leq CM^2\Bigg(\|\theta^k\|_{L^2((0,t)\times\Omega)}^2+    
    \|\vartheta^k\|_{L^2((0,t)\times\Omega\times Y_0)}^2
    +k^2\int_0^t\left(|A_\theta(\tau,k)|+\int_\Omega|A_\vartheta(\tau,x,k)|\di{x}\right)\di{\tau}\Bigg).
    \end{multline*}
    Thus, Gronwall's inequality allows us to conclude for $0<t<T_M$
    \begin{multline*}
    \|\Theta^k\|_{L^\infty(0,t;L^2(\Omega))}^2+\|\vartheta^k\|^2_{L^\infty(0,t;L^2(\Omega\times Y_0))}
    +\|\nabla\Theta^k\|_{L^2(0,t\times\Omega)}^2
    +\|\nabla_y\vartheta^k\|^2_{L^2(0,t\times\Omega\times Y_0)}\\
    \leq C\exp(tM^2)M^2k^2\int_{0}^t\left(|A_\theta(\tau,k)|+\int_\Omega|A_\vartheta(\tau,x,k)|\di{x}\right)\di{\tau}.
    \end{multline*}
    where the constant $C>0$ is still independent of $k$, $h$, and $M$.
    From here, we apply \cite[Chapter 2, §6, Theorem 6.1, Remark 6.2]{Ladyzenskaja} to conclude the existence of $k_M$, which is independent of $h$ but does depend on $M$, such that $|A_\theta(\tau,k_M)|,|A_\vartheta(\tau,x,k_M)|=0$ almost everywhere.
    More precisely
    \[
    \|\Theta\|_{L^\infty((0,t)\times\Omega)}+\|\vartheta\|_{L^\infty((0,t)\times\Omega)}\leq 2k^*\left(1+C\exp(\nicefrac{5}{4}M^2t)M^2 \sqrt{t} \right)\quad (t\in(0,T_M))
    \]
    Moreover, $tM^2\leq a^*$ for $0\leq t\leq\sqrt{T_M}$.
    Over this time interval, we have
    \[
    \|\Theta\|_{L^\infty((0,t)\times\Omega)}+\|\vartheta\|_{L^\infty((0,t)\times\Omega)}\leq 2k^*\left(1+CM^2\sqrt{t}\right) \quad (t\in (0,\sqrt{T_M}).
    \]
    This implies uniformity in $M$ of the $L^\infty$ estimates for $t\leq \nicefrac{a^*}{M^4}$.
\end{proof}

\subsection{Fixed-point argument and long time behavior}\label{subsec:analysis_fixed_point}
We first note that \cref{lemma:wellposedness_given_h} induces a solution map $\mathcal{L}:=(\mathcal{L}_1(h),\mathcal{L}_2(h))\colon\mathcal{H}_M(T_M)\to\mathcal{X}_0(T_M)$ for every $M>0$.
Here, $\mathcal{L}_1(h)\in L^2(S_M\times\Omega)$ denotes the macroscopic temperature and $\mathcal{L}_2(h)\in L^2(S_M\times\Omega\times Y_0)$ the microscopic temperature, respectively.
Also, with \cref{lemma:wellposedness_given_h}, it holds
\[
C_L:=\sup_{h\in\mathcal{H}_M(t)}\|\mathcal{L}_1(h)\|_{L^\infty((0,t)\times\Omega)}<\infty
\]
where, importantly, $C_L$ does not depend on $M$ as long as $t\in(0,\nicefrac{a^*}{M^4})$.
Setting
\[
\mathcal{Y}(t):=\left\{(\Theta,\vartheta)\in\mathcal{X}_0(t)\ : \ \|\Theta\|_{L^\infty((0,t)\times\Omega)}\leq C_L\right\}\subset\mathcal{X}(t),
\]
and $T^*_M= \nicefrac{a^*}{M^{-4}}$, it is clear that $\mathcal{L}[\mathcal{H}_M(T_M^*)]\subset \mathcal{Y}(T_M^*)$.
Let $(\Theta,\vartheta)\in \mathcal{Y}(T_M^*)$.
To get to a fixed-point argument, we must be able to construct a new height function $\tilde{h}$ based on the macroscopic temperature $\Theta$.
We introduce $\tilde{h}(t,x)=\int_0^t\Theta(\tau,x)-\Theta^{ref}\di{\tau}$ and note that $\tilde{h}\in W^{1,\infty}((0,T_M^*);L^\infty(\Omega))$ via the regularity of $\Theta$.
Moreover, %
\[
|\partial_t\tilde{h}(t,x)|=|\Theta(t,x)-\Theta^{ref}|\leq C_L+|\Theta^{ref}|
\]
for almost all $(t,x)\in (0,T_M^*)\times\Omega$.
As a consequence, $\tilde{h}\in\mathcal{H}_M(T_M)$ as long as $M\geq C_L+|\Theta^{ref}|$.
We fix $M^*=C_l+|\Theta^{ref}|$, set $T^*:=T_{M}^*$ and $S^*=(0,T_M^*)$, and introduce
\[
\mathcal{T}\colon \mathcal{Y}(T^*)\to \mathcal{Y}(T^*),\quad \mathcal{T}(\Theta,\vartheta)=\mathcal{L}\left(\int_0^t\Theta(\tau,x)-\Theta^{ref}\di{\tau}\right).
\]

\begin{theorem}[Existence of weak solutions]\label{thm:existence}
    There is at least one solution $(h,\Theta,\vartheta)\in\mathcal{H}_{M^*}(T^*)\times\mathcal{X}_0(T^*)$ for the moving boundary problem in fixed coordinates in the sense of \cref{def:weak_sol_fixed} over the time interval $S^*=(0,T^*)$.
\end{theorem}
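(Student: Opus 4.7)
The plan is to invoke Schauder's fixed-point theorem for the map $\mathcal{T}$ using $X=L^2(S^*\times\Omega)\times L^2(S^*\times\Omega\times Y_0)$ as the ambient Banach space. First I would check that $\mathcal{Y}(T^*)$ (or, if the trace constraint is not preserved under $L^2$-limits, its closed convex hull in $X$, on which $\mathcal{T}$ still makes sense since the construction of $\widetilde{h}$ depends only on $\Theta$ in $L^2$) is nonempty, convex, and closed in $X$. Convexity is immediate and closedness of the $L^\infty$-ball under $L^2$-limits is standard via lower semicontinuity of the essential supremum. The self-map property is already encoded in the choice of $M^*$ and $T^*$: for every $(\Theta,\vartheta)\in\mathcal{Y}(T^*)$ the associated height $\widetilde{h}$ lies in $\mathcal{H}_{M^*}(T^*)$, and the uniform $L^\infty$-bound $C_L$ from \cref{lemma:wellposedness_given_h} forces the image back into $\mathcal{Y}(T^*)$.

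For the compactness of $\mathcal{T}(\mathcal{Y}(T^*))$ in $X$ I would appeal to Aubin-Lions. \Cref{lemma:wellposedness_given_h} already provides uniform energy bounds in $L^2(S^*;H^1(\Omega))\times L^2(S^*\times\Omega;H^1(Y_0))$ depending only on $M^*$, and by reading \cref{eq:weak_form_reference} as an equation for $\partial_t\Theta$ and $\partial_t\vartheta$ (using the uniform positivity of $C(h),c(h)$ and the $L^\infty$-control of $K(h),\kappa(h),v(h),L(h)$ from \cref{lemma:lipschitz_transformation,lemma:lipschitz_parameters}) one obtains uniform bounds on $\partial_t\Theta$ in $L^2(S^*;H^{-1}(\Omega))$ and on $\partial_t\vartheta$ in $L^2(S^*\times\Omega;H^1(Y_0)^*)$. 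Aubin-Lions then yields relative compactness in $X$.

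The crux, and the main obstacle, is the continuity of $\mathcal{T}$ in the $X$-topology. Given a sequence $(\Theta_n,\vartheta_n)\to(\Theta,\vartheta)$ in $X$ with all iterates in $\mathcal{Y}(T^*)$, the associated height functions $\widetilde{h}_n(t,x)=\int_0^t(\Theta_n(\tau,x)-\Theta^{ref})\di{\tau}$ are uniformly bounded in $L^\infty$ and converge in every $L^p(S^*\times\Omega)$ with $p<\infty$ (by dominated convergence), and subsequentially pointwise a.e., while $\partial_t\widetilde{h}_n=\Theta_n-\Theta^{ref}\to\Theta-\Theta^{ref}=\partial_t\widetilde{h}$ in $L^2$. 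Combining this with \cref{lemma:lipschitz_transformation,lemma:lipschitz_parameters,lemma:lipschitz_cell_functions} produces strong convergence of the transformed coefficients $C(\widetilde{h}_n),K(\widetilde{h}_n),L(\widetilde{h}_n),c(\widetilde{h}_n),\kappa(\widetilde{h}_n),v(\widetilde{h}_n),f(\widetilde{h}_n)$ in the relevant $L^p$-norms. Writing the linear system satisfied by the difference $(\Theta_n',\vartheta_n')=\mathcal{T}(\Theta_n,\vartheta_n)-\mathcal{T}(\Theta,\vartheta)$, testing with $(\Theta_n',\vartheta_n')$ itself, and using the uniform coercivity from \cref{lemma:lipschitz_parameters} together with the previously established a priori bounds as multipliers for the error terms, one arrives at a Gronwall estimate which forces $\mathcal{T}(\Theta_n,\vartheta_n)\to\mathcal{T}(\Theta,\vartheta)$ in $X$. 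Schauder's theorem then supplies a fixed point $(\Theta,\vartheta)\in\mathcal{Y}(T^*)$, and setting $h(t,x)=\int_0^t(\Theta(\tau,x)-\Theta^{ref})\di{\tau}$ produces the desired weak solution $(h,\Theta,\vartheta)$ in the sense of \cref{def:weak_sol_fixed} over $S^*=(0,T^*)$.
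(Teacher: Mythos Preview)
Your proposal follows the same Schauder strategy as the paper: the self-map property via the choice of $M^*$ and $T^*$, compactness of $\mathcal{T}[\mathcal{Y}(T^*)]$ in $L^2$ via Aubin--Lions, and continuity of $\mathcal{T}$ in the $L^2$-topology. The only genuine difference is in how continuity is argued. The paper extracts, from the uniform energy bounds, a weakly convergent subsequence of $(\widehat\Theta_n,\widehat\vartheta_n)=\mathcal{T}(\Theta_n,\vartheta_n)$ in $\mathcal{X}_0(T^*)$ (strongly in $L^2$ by Aubin--Lions), passes to the limit in the weak form using the strong convergence of the coefficients $K(h_n),\kappa(h_n),\ldots$, identifies the limit with $\mathcal{T}(\Theta_\infty,\vartheta_\infty)$ by uniqueness of the linear problem, and then invokes the subsequence principle. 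You instead test the difference equation with the difference and apply Gronwall.

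One caution about your route: a Gronwall argument carried out exactly as in \cref{lemma:uniqueness} would require $\nabla\widehat\Theta,\partial_t\widehat\Theta\in L^\infty$ and $\nabla_y\widehat\vartheta,\partial_t\widehat\vartheta\in L^\infty(\Omega;L^2(Y_0))$, which \cref{lemma:wellposedness_given_h} does \emph{not} provide. Your argument can still be made to work because only continuity (not Lipschitz continuity of $\mathcal{T}$) is needed: cross terms such as $\int_\Omega(K(h_n)-K(h))\nabla\widehat\Theta\cdot\nabla\bar\Theta_n'\di{x}$ are handled by observing that $(K(h_n)-K(h))\nabla\widehat\Theta\to 0$ in $L^2$ (dominated convergence, using the uniform $L^\infty$-bound on $K$ and $\nabla\widehat\Theta\in L^2$) and then Young's inequality, rather than by the pointwise Lipschitz bound times $\|\nabla\widehat\Theta\|_{L^\infty}$. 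Your phrase ``a priori bounds as multipliers for the error terms'' is ambiguous on precisely this point and should be sharpened; otherwise the argument is sound.
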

\begin{proof}
We wirst note, that $(h,\Theta,\vartheta)\in\mathcal{H}_{M^*}(T^*)\times\mathcal{X}_0(T^*)$ is a solution if, and only if, $\mathcal{T}(\Theta,\vartheta)=(\Theta,\vartheta)$ and $h(t,x)=\int_0^t\Theta(\tau,x)-\Theta^{ref}\di{\tau}$.

\textit{(i) Continuity of fixed-point operator over $L^2$.} Let $(\Theta_n,\vartheta_n),(\Theta_\infty,\vartheta_\infty)\in\mathcal{Y}(T^*)$ such that $(\Theta_n,\vartheta_n)\to(\Theta_\infty,\vartheta_\infty)$ in $L^2(S^*\times\Omega)\times L^2(S^*\times\Omega\times Y_0)$.
Let $h_n,h_\infty\in\mathcal{H}_{M^*}(T^*)$ denote the corresponding height functions.
Due to $\partial_th_k=\Theta_k$ ($k\in\N_\infty)$, we immediately have $\partial_th_n\to \partial_t h_\infty$ and $h_n\to h_\infty$ in $L^2(S^*\times\Omega)$.
Now, we set $(\widehat{\Theta}_k,\widehat{\vartheta}_k):=\mathcal{T}(\Theta_k,\vartheta_k)\in\mathcal{Y}(T^*)$ for $k\in\N_\infty$.
Utilizing the a priori estimates established with \cref{lemma:wellposedness_given_h}, there exist $(\Theta,\vartheta)\in \mathcal{Y}(T^*)$ such that, at least along a subsequence, $(\widehat{\Theta}_n,\widehat{\vartheta}_n)\to (\widehat{\Theta},\widehat{\vartheta})$ weakly in $\mathcal{X}_0(T^*)$ and, with Lions-Aubin, strongly in $L^2$.
If we can show that $(\widehat{\Theta},\widehat{\vartheta})=(\widehat{\Theta}_\infty,\widehat{\vartheta}_\infty)$ and that the full sequence converges, continuity holds.

Looking at the most critical terms in the weak forms, namely the heat conductivities, we have the limits
\begin{align*}
    \lim_{j\to\infty}\int_{\Omega}K(h_{n_j})\nabla\widehat{\Theta}_{n_j}\cdot\nabla\Phi\di{x}
    &=\int_{\Omega}K(h_\infty)\nabla \widehat{\Theta}\cdot\nabla\Phi\di{x},\\
    \lim_{j\to\infty}\int_\Omega\int_{Y_0}\kappa(h_{n_j})\nabla_y\vartheta_{n_j}\cdot\nabla_y\phi\di{y}\di{x}&=\int_\Omega\int_{Y_0}\kappa(h)\nabla_y\widehat{\vartheta}\cdot\nabla_y\phi\di{y}
\end{align*}
for all $(\Phi,\varphi)\in \mathcal{X}_0(T^*)$.
Here, we used the weak convergences of $(\nabla\widehat{\Theta}_n,\nabla_y\widehat{\vartheta_{n}})$ along a subsequence ${n_j}$ and the strong convergence of $h_k$ together with the uniform bounds for $K,\kappa$.
After identifying all terms in the weak form in the same way, we can see that $\widehat{\Theta}=\widehat{\Theta}_\infty$, which implies $\widehat{\Theta}_{n_j}\to\widehat{\Theta}_\infty$ strongly in $L^2(S^*\times\Omega)$, similarly for $\hat{\vartheta}_{n_j}$.
Finally, since every subsequence $(\widehat{\Theta}_n,\widehat{\vartheta}_n)$ has a converging subsequence which all converge to the same limit, the whole sequence converges implying $L^2$-continuity of $\mathcal{T}$.

\textit{(ii) Existence of solutions.} We showed that
\[\mathcal{T}\colon \mathcal{Y}(T^*)\subset L^2(S^*\times\Omega)\times L^2(S^*\times\Omega\times Y_0)\to L^2(S^*\times\Omega)\times L^2(S^*\times\Omega\times Y_0)
\]
is continuous.
Moreover $\mathcal{T}[\mathcal{Y}(T^*)]$ is compact in  $L^2(S^*\times\Omega\times Y_0)$ via Lions-Aubin and $\mathcal{Y}(T^*)$ is closed and convex.
With Schauders fixed point theorem, there must therefore be at least one fixed point $(\Theta,\vartheta)\in \mathcal{Y}(T^*)$ satisfying $\mathcal{T}(\Theta,\vartheta)=(\Theta,\vartheta)$.
\end{proof}
\begin{remark}\label{remark:analysis_initial_height}
    Please note that all the arguments hold true when starting with a non-trivial height function, i.e., when $h(0)=h_0$ for some $h_0\in L^\infty(\Omega)$ with $c_0:=\|h_0\|_{L^\infty(\Omega)}<a^*$.
    It does however lead to a shorter time horizon, as now $M^*=c_0+C_L+|\Theta^{ref}|$ and $T^*=\nicefrac{(a^*-c_0)}{(M^*)^4}$.
\end{remark}

This argument does not guarantee uniqueness, however, this follows under slightly higher regularity via Lipschitz arguments (see \cref{lemma:uniqueness} below).
\Cref{thm:existence} ensures the existence of solutions for the moving boundary problem in fixed-coordinates, but using the push forward $S_h$ the corresponding solutions in moving coordinates are given via $(\Theta,S_h\vartheta, h)$.

\begin{lemma}[Uniqueness]\label{lemma:uniqueness}
If $\mathcal{T}$ has a fixed point $(\Theta,\vartheta)\in\mathcal{Y}(T^*)$ with the additional regularity $\partial_t\Theta, \nabla\Theta\in L^\infty((0,T^*)\times\Omega)$ and $\partial_t\vartheta, \nabla_y\vartheta\in L^\infty((0,T^*)\times\Omega;L^2(Y_0))$.
Then, $(\Theta,\vartheta)$ is the only fixed point.
\end{lemma}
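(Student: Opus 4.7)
Suppose, for contradiction, that there are two fixed points $(\Theta,\vartheta)$ (satisfying the stated higher regularity) and $(\tilde\Theta,\tilde\vartheta)\in\mathcal{Y}(T^*)$ with corresponding height functions $h$ and $\tilde h$. Set $\bar\Theta=\Theta-\tilde\Theta$, $\bar\vartheta=\vartheta-\tilde\vartheta$, and $\bar h=h-\tilde h$. Since $\partial_t h_i=\Theta_i-\Theta^{ref}$, we have $\partial_t\bar h=\bar\Theta$ and thus, via Cauchy--Schwarz,
\[
\|\bar h(t)\|_{L^2(\Omega)}^2\leq t\int_0^t\|\bar\Theta(\tau)\|_{L^2(\Omega)}^2\di{\tau}.
\]
The strategy is to derive an energy inequality for $(\bar\Theta,\bar\vartheta)$ from the difference of the two weak forms and close via Gr\"onwall.

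The central trick is the choice of splitting for each term in the difference of the weak forms: for any coefficient $A(h)$ multiplying a gradient or time-derivative of $\Theta$ or $\vartheta$, I would write
\[
A(h)\nabla\Theta-A(\tilde h)\nabla\tilde\Theta=A(\tilde h)\nabla\bar\Theta+\bigl(A(h)-A(\tilde h)\bigr)\nabla\Theta,
\]
and analogously for the microscopic terms, so that the ``correction'' piece always carries the \emph{high-regularity} factor. Testing the difference of \cref{eq:weak_form_reference} with $(\Phi,\phi)=(\bar\Theta,\bar\vartheta)$, the main parts produce $\frac{1}{2}\frac{d}{dt}\int C(\tilde h)\bar\Theta^2+\frac{1}{2}\frac{d}{dt}\int\!\!\int c(\tilde h)\bar\vartheta^2$ together with the coercive diffusion contributions $c\|\nabla\bar\Theta\|_{L^2}^2+c\|\nabla_y\bar\vartheta\|_{L^2(\Omega\times Y_0)}^2$, using the uniform positivity from \cref{lemma:lipschitz_transformation,lemma:lipschitz_parameters}.

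The correction terms are estimated by combining the Lipschitz bounds $|C(h)-C(\tilde h)|+|K(h)-K(\tilde h)|+|\kappa(h)-\kappa(\tilde h)|+|L(h)-L(\tilde h)|+|v(h)-v(\tilde h)|+|f(h)-f(\tilde h)|\leq C(|\bar h|+|\bar\Theta|)$, which follow from \cref{lemma:lipschitz_transformation,lemma:lipschitz_parameters,lemma:lipschitz_cell_functions} (the $|\bar\Theta|$ piece appears only in $v(h)$ since $v\propto\partial_t h$), with the $L^\infty$ bounds on $\nabla\Theta$, $\partial_t\Theta$, and on $\Theta$ itself, and the $L^\infty_{t,x}L^2_y$ bounds on $\nabla_y\vartheta$ and $\partial_t\vartheta$ (the latter handles, for instance, $\int(c(h)-c(\tilde h))\partial_t\vartheta\,\bar\vartheta$). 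Applying Young's inequality to absorb any $\|\nabla\bar\Theta\|,\|\nabla_y\bar\vartheta\|$ factors into the coercive diffusion terms, I obtain
\[
\frac{d}{dt}\Bigl(\|\bar\Theta\|_{L^2(\Omega)}^2+\|\bar\vartheta\|_{L^2(\Omega\times Y_0)}^2\Bigr)\leq C\Bigl(\|\bar\Theta\|_{L^2(\Omega)}^2+\|\bar\vartheta\|_{L^2(\Omega\times Y_0)}^2+\|\bar h\|_{L^2(\Omega)}^2\Bigr).
\]
Inserting the $\bar h$-bound and invoking Gr\"onwall's inequality (with vanishing initial data) yields $\bar\Theta\equiv 0$, $\bar\vartheta\equiv 0$ on $(0,T^*)$, whence also $\bar h\equiv 0$.

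The main obstacle is a bookkeeping one rather than an analytical one: the splitting must be chosen in the right direction so that every uncontrolled quantity (gradients of $\tilde\Theta,\tilde\vartheta$, time derivatives of $\tilde\Theta,\tilde\vartheta$) appears only inside the main coercive term $A(\tilde h)\nabla\bar\Theta$, while the Lipschitz remainder always acts on the \emph{regular} solution, for which we possess the $L^\infty$ (resp. $L^\infty_{t,x}L^2_y$) bounds stated in the hypothesis. Once this choice is made consistently for the time-derivative, diffusion, convection, latent-heat, and source terms, the remainder of the argument is a standard Gr\"onwall closure.
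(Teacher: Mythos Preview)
Your proposal is correct and follows essentially the same route as the paper: take the difference of the two weak forms, test with $(\bar\Theta,\bar\vartheta)$, split each bilinear term so that the Lipschitz remainder acts on the high-regularity solution, and close via Gr\"onwall after substituting $\partial_t\bar h=\bar\Theta$. The only cosmetic difference is which of the two height functions you place in front of the coercive main part (you use $A(\tilde h)$, the paper uses the analogous choice), which is immaterial thanks to the uniform positivity in \cref{lemma:lipschitz_transformation,lemma:lipschitz_parameters}.
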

\begin{proof}
    Let $(\Theta_i,\vartheta_i)\in\mathcal{Y}(T^*)$ ($i=1,2$) be fixed-points of $\mathcal{T}$ with the higher regularity for $i=2$ with corresponding height functions $h_i\in\mathcal{H}_{M^*}(T^*)$.
    Taking the difference of their weak forms and testing with $(\bar{\Theta},\bar{\vartheta}):=(\Theta_1-\Theta_2,\vartheta_1-\vartheta_2)$:
    \begin{multline*}
    \underbrace{\int_{\Omega}\partial_t(C(h_1)\Theta_1-C(h_2)\Theta_2)\bar{\Theta}\di{x}}_{I_1}+\underbrace{\int_\Omega\int_{Y_0}\partial_t(c(h_1)\vartheta_1-c(h_2)\vartheta_2)\bar{\vartheta}\di{y}\di{x}}_{I_2}\\
    +\underbrace{\int_{\Omega}\left(K(h_1)\nabla\Theta_1-K(h_2)\nabla\Theta_2\right)\cdot\nabla\bar{\Theta}\di{x}}_{I_3}
    +\underbrace{\int_\Omega\int_{Y_0}\left(\kappa(h_1)\nabla_y\vartheta_1-\kappa(h_2)\nabla_y\vartheta_2\right)\cdot\nabla_y\bar{\vartheta}\di{y}\di{x}}_{I_4}\\
    +\underbrace{\int_\Omega\int_{Y_0}(\vartheta_1 v(h_1)-\vartheta_2 v(h_2))\cdot\nabla_y\bar{\vartheta}\di{y}\di{x}}_{I_5}\\
    =\underbrace{\int_{\Omega}( L(h_2)\partial_th_2- L(h_1)\partial_th_1)\bar{\Theta}\di{x}}_{I_6}
    +\underbrace{\int_\Omega\int_{Y_0}(f(h_1)-f(h_2))\bar{\vartheta}\di{y}\di{x}}_{I_7}.
    \end{multline*}
    Using the estimates from \cref{lemma:lipschitz_transformation,lemma:lipschitz_cell_functions,lemma:lipschitz_parameters}, we can now estimate the individual terms.
    Starting with the macroscopic time derivative, we get
    \begin{align*}
    I_1&=
    \int_{\Omega}\partial_t(C(h_1)\bar{\Theta})\bar{\Theta}\di{x}+\int_{\Omega}\partial_t[(C(h_1)-C(h_2))\Theta_2]\bar{\Theta}\di{x}\\
    &=\frac{1}{2}\ddt\int_\Omega C(h_1)\bar{\Theta}^2\di{x}
    -\frac{1}{2}\int_\Omega\partial_tC(h_1)\bar{\Theta}^2\di{x}
    +\int_\Omega\partial_t[(C(h_1)-C(h_2))\Theta_2]\bar{\Theta}\di{x}.
    \end{align*}
    The second and third term on the right hand side satisfy
    \[
    \left|\frac{1}{2}\int_\Omega\partial_tC(h_1)\bar{\Theta}^2\di{x}\right|
    +\left|\int_\Omega\partial_t[(C(h_1)-C(h_2))\Theta_2]\bar{\Theta}\di{x}\right|
    \leq C\|\bar{\Theta}\|_{L^2(\Omega)}\left(\|\bar{\Theta}\|^2_{L^2(\Omega)}+\|\partial_t\bar{h}\|_{L^2(\Omega)}
    +\|\bar{h}\|_{L^2(\Omega)}\right).
    \]
    Similarly, we get for the microscopic time derivative:
    \[
    I_2\geq\frac{1}{2}\ddt\int_\Omega\int_{Y_0}c(h_1)\bar{\vartheta}^2\di{y}\di{x}
    -C\|\bar{\vartheta}\|_{L^2(\Omega\times Y_0)}\left(\|\bar{\vartheta}\|_{L^2(\Omega\times Y_0)}+\|\partial_t\bar{h}\|_{L^2(\Omega)}+\|\bar{h}\|_{L^2(\Omega)}\right)
    \]
    Looking at the macroscopic heat conductivity term:
    \begin{align*}    
    I_3&=\int_{\Omega}K(h_1)\nabla\bar{\Theta}\cdot\nabla\bar{\Theta}\di{x}+\int_{\Omega}\left(K(h_1)-K(h_2)\right)\nabla\Theta_2\cdot\nabla\bar{\Theta}\di{x},\\
    &\geq c\|\nabla\bar{\Theta}\|_{L^2(\Omega)}^2-C\|\bar{h}\|_{L^2(\Omega)}\|\nabla\bar{\Theta}\|_{L^2(\Omega)}
    \end{align*}
    The same type of estimate also holds for the microscopic heat conductivity term $I_4:$
    \begin{align*}
    I_4&=\int_\Omega\int_{Y_0}\kappa(h_1)\nabla_y\bar{\vartheta}\cdot\nabla_y\bar{\vartheta}\di{y}\di{x}
    +\int_\Omega\int_{Y_0}\left(\kappa(h_1)-\kappa(h_2)\right)\nabla_y\vartheta_2\cdot\nabla_y\bar{\vartheta}\di{y}\di{x}\\
    &\geq c\|\nabla_y\bar{\vartheta}\|_{L^2(\Omega\times Y_0)}^2
    -C\|\bar{h}\|_{L^2(\Omega)}\|\nabla_y\bar{\vartheta}\|_{L^2(\Omega\times Y_0)}
    \end{align*}
    The convective part on the microscale can be estimated as
    \begin{align*}
        |I_5|&\leq C(\|\bar{\vartheta}\|_{L^2(\Omega\times Y_0)}\|\nabla_y\bar{\vartheta}\|_{L^2(\Omega\times Y_0)}+\|\bar{h}\|_{L^2(\Omega)}\|\nabla_y\bar{\vartheta}\|_{L^2(\Omega\times Y_0)}),
    \end{align*}
    and the macroscopic heat production via
    \begin{align*}
    |I_6|&\leq C\|\bar{\Theta}\|_{L^2(\Omega)}\left(\|\bar{h}\|_{L^2(\Omega)}
    +\|\partial_t\bar{h}\|_{L^2(\Omega)}\right)
    \end{align*}
    For $I_7$, we use the uniform Lipschitz continuity of $f$ to estimate
    \[
    |I_7|\leq C\|\bar{h}\|_{L^2(\Omega)}\|\bar{\vartheta}\|_{L^2(\Omega;L^1(Y))}.
    \]
    Summarizing and sorting these individual estimates while subsuming the gradient differences via Young’s inequality, we get
    \begin{multline*}
        \ddt\int_\Omega C(h_1)\bar{\Theta}^2\di{x}+\ddt\int_\Omega\int_{Y_0}c(h_1)\bar{\vartheta}^2\di{y}\di{x}
        +\|\nabla\bar{\theta}\|_{L^2(\Omega}^2+\|\nabla_y\bar{\vartheta}\|_{L^2(\Omega\times Y_0)}^2\\
        \leq C\left(\|\bar{\Theta}\|^2_{L^2(\Omega)}+\|\bar{\vartheta}\|^2_{L^2(\Omega\times Y_0)}+\|\bar{h}\|^2_{L^2(\Omega)}+\|\partial_t\bar{h}\|^2_{L^2(\Omega)}\right).
    \end{multline*}
    Integrating over any time interval $(0,t)$ and keeping in mind the uniform positivity of $C(h_1)$ and $c(h_1)$, we are led to
    \begin{multline}\label{eq:unique_gronwall}
        \|\bar{\Theta}(t)\|^2_{L^2(\Omega)}+\|\bar{\vartheta}(t)\|_{L^2(\Omega\times Y_0)}
        +\|\nabla\bar{\theta}\|_{L^2((0,t)\times\Omega}^2+\|\nabla_y\bar{\vartheta}\|_{L^2((0,t)\times\Omega\times Y_0)}^2\\
        \leq C\int_0^t\left(\|\bar{\Theta}\|^2_{L^2(\Omega)}+\|\bar{\vartheta}\|^2_{L^2(\Omega\times Y_0)}+\|\bar{h}\|^2_{L^2(\Omega)}+\|\partial_t\bar{h}\|^2_{L^2(\Omega)}\right)\di{\tau}.
    \end{multline}
    Now, looking at the terms with the height function, we first note that $\partial_t\bar{h}=\Theta$.
    Similarly, we estimate 
    \[
    \int_0^t\|\bar{h}(\tau)\|^2_{L^2(\Omega)}\di{\tau}
    \leq\int_0^t\left(\int_\Omega\int_0^\tau|\bar{\Theta}(s,x)|^2\di{s}\di{x}\right)^{2}\di{\tau}
    =t\int_0^t\|\bar{\Theta}\|^2_{L^2((0,\tau)\times\Omega)}\di{\tau}
    \leq t^2\|\bar{\Theta}\|^2_{L^2((0,t)\times\Omega)}.
    \]
    Applying Grönwall's inequality to \cref{eq:unique_gronwall}, we conclude $\bar{\Theta}=0$, $\bar{\vartheta}=0$ which also implies $\bar{h}=0$.    
\end{proof}

The solution given by \cref{thm:existence} is only local-in-time, which is expected as the Hanzawa transformation can only ever track changes inside the tubular neighborhoods.
In the following lemma, we show that there are no other factors limiting the time horizon. 
This also means that in some very specific scenarios, i.e., whenever the data is very well prepared to ensure that the growth/shrinkage of the cells is controlled, global existence holds.
To that end, we introduce the supremal time horizon
\[
T_{sup}:=\sup\{t\in\R \ : \ \text{there exists a solution}\ (\Theta,\vartheta,h)\ \text{in the sense of \cref{def:weak_sol_fixed}}\}.
\]
\begin{lemma}\label{lemma:long_time}
    Let $T\in\R$, let $(\Theta,\vartheta,h)\in X_0(T)\times \mathcal{H}_{M^*}(T)$ be a solution in the sense of \cref{def:weak_sol_fixed}, and let $\|h\|_{L^\infty((0,T)\times\Omega)}<a^*$.
    Then, the solution can be extended, i.e., $T_{sup}>T$.

    Moreover, if $\|h\|_{L^\infty((0,T')\times\Omega)}<a^*$ for all $0<T'<T_{sup}$, then the solution can be extended globally, i.e, $T_{sup}=\infty$.
\end{lemma}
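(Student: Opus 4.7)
Both claims follow by iterating the local existence result (\cref{thm:existence}) via \cref{remark:analysis_initial_height}, which permits restarting the problem from any non-trivial initial height $h_0\in L^\infty(\Omega)$ with $c_0:=\|h_0\|_{L^\infty(\Omega)}<a^*$, producing a weak solution on an interval of length $\tau(c_0)=(a^*-c_0)/(M^*)^4$, where $M^*=c_0+C_L+|\Theta^{ref}|$ and $C_L$ depends only on the $L^\infty$ bounds of the initial data (via \cref{lemma:wellposedness_given_h}).

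For the first statement, set $c_0:=\|h\|_{L^\infty((0,T)\times\Omega)}<a^*$. Since $h\in W^{1,\infty}(0,T;L^\infty(\Omega))$, the time-trace $h(T)\in L^\infty(\Omega)$ exists and satisfies $\|h(T)\|_{L^\infty(\Omega)}\leq c_0<a^*$. Moreover $\Theta(T)$ and $\vartheta(T)$ inherit $L^\infty$ bounds from \cref{lemma:wellposedness_given_h}. Treating $(h(T),\Theta(T),\vartheta(T))$ as fresh initial data at time $T$ and applying \cref{thm:existence} in the form of \cref{remark:analysis_initial_height} furnishes a weak solution $(\tilde h,\tilde \Theta,\tilde\vartheta)$ on $(T,T+\tau(c_0))$. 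Concatenating this with the original solution produces a weak solution on $(0,T+\tau(c_0))$; the weak form \cref{eq:weak_form_reference} is preserved across the gluing time $T$ since both halves share the same trace at $T$ and the PDE is translation invariant in time. Hence $T_{sup}\geq T+\tau(c_0)>T$.

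For the second statement, assume by contradiction $T_{sup}<\infty$. Using the identity $\partial_th=\Theta-\Theta^{ref}\in L^\infty$ on every $(0,T')$ with $T'<T_{sup}$ together with the standing hypothesis, one may pick $T'$ so close to $T_{sup}$ and a constant $c_1<a^*$ such that $\|h(T')\|_{L^\infty(\Omega)}\leq c_1$ and simultaneously $T_{sup}-T'<\tau(c_1)$. Invoking Part~1 at time $T'$ then extends the solution to $(0,T'+\tau(c_1))$, which strictly overtakes $T_{sup}$. This contradicts the definition of $T_{sup}$, forcing $T_{sup}=\infty$.

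The main obstacle is ensuring that the extension length $\tau(c_1)$ does not collapse to zero as $T'\uparrow T_{sup}$: one needs both that $\|h(T')\|_{L^\infty(\Omega)}$ stays bounded away from $a^*$ (uniformly in $T'$ near $T_{sup}$) and that the constant $M^*$ fed into $\tau(c_1)$ is controlled independently of $T'$. The first follows from the hypothesis together with the absolute continuity of $h$ in $t$, and the second rests on the uniform-in-$M$ $L^\infty$ bound of \cref{lemma:wellposedness_given_h}, which is valid on time windows of length $a^*/M^4$ and therefore propagates across finitely many extensions as long as the height bound remains strict. A minor technical point is verifying that the concatenated triple is a valid weak solution in the sense of \cref{def:weak_sol_fixed}, but this reduces to a standard time-shift argument combined with the matching of traces.
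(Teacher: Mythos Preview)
Your Part~1 matches the paper's argument: restart at time $T$ with initial height $h(T)$ (using \cref{remark:analysis_initial_height}) and glue. Fine.

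In Part~2 there is a genuine gap. Your contradiction argument hinges on finding $T'<T_{sup}$ and $c_1<a^*$ with $\|h(T')\|_{L^\infty(\Omega)}\leq c_1$ and $T_{sup}-T'<\tau(c_1)=(a^*-c_1)/(M^*)^4$. This works only if $\|h\|_{L^\infty((0,T')\times\Omega)}$ stays \emph{uniformly} bounded away from $a^*$ for $T'$ near $T_{sup}$. The hypothesis, however, allows the scenario
\[
\lim_{T'\to T_{sup}}\|h\|_{L^\infty((0,T')\times\Omega)}=a^*,
\]
in which case $a^*-c_1\to0$ forces $\tau(c_1)\to0$ and the inequality $T_{sup}-T'<\tau(c_1)$ cannot be arranged. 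Your appeal to ``absolute continuity of $h$ in $t$'' does not help: $|\partial_th|\leq M^*$ only gives the \emph{upper} bound $a^*-\|h\|_{L^\infty((0,T')\times\Omega)}\leq M^*(T_{sup}-T')$, which is the wrong direction (you would need a lower bound of size $(M^*)^4(T_{sup}-T')$, impossible once $M^*\geq1$).

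The paper handles precisely this by an explicit case distinction on the value of $\lim_{T'\to T_{sup}}\|h\|_{L^\infty((0,T')\times\Omega)}$. Your argument is essentially the paper's case~(i). For case~(ii) the paper does \emph{not} try to overshoot $T_{sup}$ via Part~1; instead it glues the solutions on all $(0,T')$, $T'<T_{sup}$, into a single candidate on $(0,T_{sup})$ and argues that the a~priori bounds of \cref{lemma:wellposedness_given_h} do not blow up at $T_{sup}$ (since $\|h\|\leq a^*$ and $T_{sup}<\infty$), so the limit object is itself a weak solution on $(0,T_{sup})$, producing the desired contradiction. You need to incorporate this limiting/gluing step (or otherwise rule out case~(ii)) for the second assertion to go through.
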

\begin{proof}
Let $T\in\R$ and let $(\Theta,\vartheta,h)\in \mathcal{X}_0(T)\times \mathcal{H}_{M^*}(T)$ be a solution with $\|h\|_{L^\infty((0,T)\times\Omega)}=a^*-\e$ for some $\e>0$.
Now, taking $Y(a^*-\e)$ and $\Gamma(a^*-\e)$ as the initial geometry and $(\Theta(T),\vartheta(T))$ as initial conditions, all results from this section can be applied.
In this case, we do not have a uniform initial geometry, i.e., the height function is not uniformly 0, but this does not pose problems (see \cref{remark:initial_height,remark:analysis_initial_height}).
Therefore, there is a time horizon $T_\e>0$ and a solution $(\Theta',\vartheta',h')$ on $(0,T_\e)$. 
Gluing $(\Theta',\vartheta',h')$ to $(\Theta,\vartheta,h)$ at time $T$ results in a solution on $(0,T+T_\e)$.

For the second proposition, assume that $\|h\|_{L^\infty((0,T')\times\Omega)}<a^*$ for all $T'<T_{sup}$.
There are two cases:\footnote{The limit must exist as the norm is monotonically increasing and bounded.}
\[
(i) \lim_{T'\to T_{sup}}\|h\|_{L^\infty((0,T')\times\Omega)}=a-\e<a^*,\quad (ii) \lim_{T'\to T_{sup}}\|h\|_{L^\infty((0,T')\times\Omega)}=a^*.
\]
In the first case, $T_{sup}=\infty$ because otherwise we could extend via the first part (choose $T'<T_{sup}$ such that $T_{sup}<T'+T_\e)$.
In the second case, if $T_{sup}$ is finite the solution can not be extended to $T_{sup}$ (since we assumed $\|h\|_{L^\infty((0,T')\times\Omega)}<a^*$).

To show, on the contrary, that it can be extended, we first glue all extensions together to get a height function $h\in L^\infty((0,T_{sup})\times\Omega)$ which automatically satisfies $\|h\|_{L^\infty((0,T_{sup})\times\Omega)}=a^*$.
Similarly, we get the temperatures $(\Theta,\vartheta)$.
Now, $(\Theta,\vartheta,h)$ is a solution on $(0,T')$ for all $0<T'<T_{sup}$.
More specifically, it satisfies all conditions in \cref{def:weak_sol_fixed} for all smaller time intervals.
Since none of the estimates for $(\Theta,\vartheta,h)$ blow-up for $T'\to T_{sup}$ (note that $\|h\|_{L^\infty((0,T_{sup})\times\Omega)}\leq a^*$ and $T_{sup}<\infty$), if follows that $(\Theta,\vartheta,h)\in\mathcal{X}(T_{sup})\times \mathcal{H}(T_{sup})$ is a solution over $(0,T_{sup})$.
As a consequence, $T_{sup}=\infty$.
\end{proof}

\section{Numerical Analysis}\label{sec:numerical_analysis}
From a numerical perspective, the present model is quite challenging and computationally expensive to solve due to the different scales, changing domains, and non-linear coupling.
In this section, we introduce and investigate a precomputing strategy together with a semi-implicit time-stepping scheme to resolve these problems.

First note, that independent of the underlying numerical algorithm, the effective conductivity $K$ has to be recomputed multiple times throughout the simulation process: after each discrete time step and at every macroscopic node since $K$ depends on the local value of the height function $h$.
This is quite expensive since differential equations (cell problems) have to be solved to compute $K$.
Seeing that in the underlying setup, the general cell topology does not change for different heights and only depends on $h$, it would be advantageous if one could precompute the conductivity $K$ for a discrete number of height values in an offline phase and utilize these values in the simulation by interpolation.
We start by analyzing this idea and the influence of the solution in more detail.
\subsection{Precomputing strategy}
Given $N \in \N$, we evenly split the interval $[-a^*, a^*]$ into $N$ sub-intervals $[h_i,h_{i+1}]$ with $h_i = -a^* + 2a^*\tfrac{i}{N}$, for $i=0,\dots,N$.
We then compute for each $h_i$ the corresponding conductivity $K_i = K(h_i)$ given by \cref{eq:system:conductivity}.
We also set $\Delta h = \nicefrac{1}{N}$ in the following.\footnote{
    Depending on the expected behavior of $K(h)$ with respect to $h$, it might be advantageous to utilize a nonuniform grid (see e.g., \cite{Nepal2024}).
    In our case, this is unnecessary as the general behavior is quite smooth and uniform (for comparison, see \cref{fig:K_dependence_of_h}).
}
For any $h \in [-a^*, a^*]$ the precomputed conductivity $K_\text{pre}$ is then defined via an interpolation scheme. This raises the question of what interpolation scheme is suitable given the properties of $K$.
In line with the Lipschitz continuity of $K$ (\cref{lemma:lipschitz_parameters}), a piecewise linear interpolation is a natural candidate, e.g.,
\begin{equation}
    K_\text{lin}(h) = \left(1 - \frac{h - h_i}{\Delta h}\right)K_i +  \frac{h - h_i}{\Delta h} K_{i+1} , \quad \text{if } h \in [h_i, h_{i+1}),
\end{equation}
This Lipschitz continuity immediately guarantees at least a first-order interpolation error:
\begin{equation*}
     |K(h) - K_\text{lin}(h)| \leq C \frac{\Delta h}{2}.
\end{equation*}
%
We need higher regularity results for the effect conductivity to ensure higher order errors, which we investigate in the following lemma.
\begin{lemma}\label{lemma:K_differentiable}
    For the effective conductivity it holds $K \in C^{1,1}([-a^*, a^*], \R^{d\times d})$.
\end{lemma}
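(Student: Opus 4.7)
The plan is to transform the cell problem onto the fixed reference domain $Y \setminus Y_0$, observe that the resulting $h$-dependence is smooth (in fact analytic) at the coefficient level, and then transfer this regularity to both $\tilde\xi_{j,h} := S_h \xi_{j,h}$ and to $K(h)$ itself.

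First, by the change of variables $y \mapsto s_h(y)$ already used in the proof of \cref{lemma:lipschitz_cell_functions}, the cell problem becomes: find $\tilde\xi_{j,h} \in H^1_\#(Y \setminus Y_0)$ with
\[
\int_{Y\setminus Y_0} J_h F_h^{-1}\mathcal{K} F_h^{-T} \nabla \tilde\xi_{j,h} \cdot \nabla \phi \, dy = -\int_{Y\setminus Y_0} J_h F_h^{-1} \mathcal{K} e_j \cdot \nabla \phi \, dy
\]
for all $\phi \in H^1_\#(Y\setminus Y_0)$. Outside $U_\Gamma$, the coefficients are constant in $h$. Inside $U_\Gamma$, we have the affine dependence $F_h = \ID + h D_y \psi$, and since $|h\, D_y\psi| < \nicefrac{1}{2}$ uniformly by construction, the Neumann series $F_h^{-1} = \sum_k (-h D_y\psi)^k$ converges and is $C^\infty$ (actually analytic) in $h$; the determinant $J_h$ is a polynomial in $h$. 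Consequently, the maps $h \mapsto A(h) := J_h F_h^{-1}\mathcal{K} F_h^{-T}$ and $h \mapsto b_j(h) := J_h F_h^{-1}\mathcal{K} e_j$ are $C^\infty$ from $[-a^*,a^*]$ into $L^\infty(Y\setminus Y_0; \R^{d\times d})$ and $L^\infty(Y\setminus Y_0; \R^d)$, respectively.

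Next I extract the smoothness of $h \mapsto \tilde\xi_{j,h}$. By \cref{lemma:lipschitz_transformation}(i), the bilinear form induced by $A(h)$ is uniformly coercive on $H^1_\#(Y\setminus Y_0)$, so the associated operator $\mathcal{A}(h) \in \mathcal{L}(H^1_\#, (H^1_\#)^*)$ is invertible with uniform bound on $\mathcal{A}(h)^{-1}$. Since inversion of boundedly invertible operators is smooth and $h \mapsto \mathcal{A}(h)$, $h \mapsto \nabla \cdot b_j(h)$ are smooth (as functions into $\mathcal{L}(H^1_\#,(H^1_\#)^*)$ and $(H^1_\#)^*$ respectively), we obtain $h \mapsto \tilde\xi_{j,h} = \mathcal{A}(h)^{-1}(\nabla \cdot b_j(h))$ is $C^\infty$ into $H^1_\#(Y\setminus Y_0)$; in particular, $C^{1,1}$. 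Concretely, the derivative $\eta_j(h) := \partial_h \tilde\xi_{j,h}$ solves the cell-type problem
\[
\int_{Y\setminus Y_0} A(h)\nabla \eta_j(h)\cdot \nabla \phi \, dy = -\int_{Y\setminus Y_0}\Bigl(A'(h)\nabla \tilde\xi_{j,h} + b_j'(h)\Bigr)\cdot \nabla \phi \, dy,
\]
and Lipschitz continuity of $\eta_j$ in $h$ follows by repeating the estimate of \cref{lemma:lipschitz_cell_functions} applied to this linearized equation.

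Finally, expressing $K(h)$ on the reference domain yields
\[
K(h)_{ij} = \int_{Y\setminus Y_0} J_h \mathcal{K}\bigl(F_h^{-T}\nabla \tilde\xi_{j,h} + e_j\bigr)\cdot\bigl(F_h^{-T}\nabla \tilde\xi_{i,h} + e_i\bigr) \, dy,
\]
which is a composition of $C^{1,1}$ maps (product of $C^\infty$ coefficient maps $h\mapsto J_h, F_h^{-T}$ with the $C^{1,1}$ map $h\mapsto \tilde\xi_{j,h}$), so the claim $K \in C^{1,1}([-a^*,a^*];\R^{d\times d})$ follows. The main obstacle is rigorously justifying differentiability of $\tilde\xi_{j,h}$ in $h$; I expect either an implicit-function-theorem argument on $\mathcal{A}(h)^{-1}$ or, alternatively, a direct difference-quotient argument testing against $\tfrac{1}{h_1-h_2}(\tilde\xi_{j,h_1}-\tilde\xi_{j,h_2}) - \eta_j(h_2)$ to deliver the desired convergence, with the ellipticity and Lipschitz bounds of \cref{lemma:lipschitz_transformation,lemma:lipschitz_cell_functions} as the analytic backbone.
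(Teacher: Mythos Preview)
Your proposal is correct and follows essentially the same route as the paper: transform the cell problem to the fixed reference domain, observe that $F_h^{-1}$ and $J_h$ (and hence the coefficients $A(h)$, $b_j(h)$) are smooth in $h$, invoke an implicit-function-theorem/solution-operator argument to obtain differentiability of $\tilde\xi_{j,h}$ with the derivative solving the linearized cell problem, and then conclude for $K(h)$ by composition. The only cosmetic differences are that the paper computes $\partial_h F_h^{-1}$ and $\partial_h J_h$ explicitly rather than via the Neumann series, and writes the right-hand side of the transformed cell problem as a boundary integral; your observation that the dependence is in fact $C^\infty$ is also noted in the paper (cf.\ \cref{remark:Lipschitz_constant_of_derivative}).
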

\begin{proof}
    Recall, that the effective conductivity is given by
    \begin{align*}
        K_{ij}(h) = \int_{Y\setminus Y(h)} \mathcal{K}(\nabla\xi_{h,j} + e_j) \cdot e_i\di{y}
        = \int_{Y\setminus Y_0} \mathcal{K} (F_h^{-1} \nabla(S_h^{-1}\xi_{h,j}) + e_j) \cdot e_i J_h\di{y}.
    \end{align*}
    Therefore, by the product rule, we obtain the following expression assuming that all parts are well-defined:
    \begin{equation}\label{eq:partial_of_K}
        \begin{split}
            \partial_h K_{ij}(h) 
            &= 
            \int_{Y\setminus Y_0} K (\partial_h F_h^{-1} \nabla(S_h^{-1}\xi_{h,j})) \cdot e_i J_h \di{y}
            + 
            \int_{Y\setminus Y_0} K (F_h^{-1} \partial_h \nabla(S_h^{-1}\xi_{h,j})) \cdot e_i J_h\di{y} \\
            &\hspace{50pt}+ 
            \int_{Y\setminus Y_0} K (F_h^{-1} \nabla(S_h^{-1}\xi_{h,j}) + e_j) \cdot e_i \partial_h J_h\di{y}.
        \end{split}
    \end{equation}
    Now, we investigate if all the derivatives on the right-hand side are well-defined and Lipschitz with respect to $h$.
    For the Matrix $F_h^{-1} = (\ID+h D_y\psi)^{-1}$, it holds $\partial_h F_h^{-1} = -F_h^{-1} D_y\psi F_h^{-1}$. 
    Therefore, $\partial_h F_h^{-1}$ is well-defined and also Lipschitz in $h$.
    For the determinate $J_h = \det(\ID+hD_y \psi)$, it holds
    \begin{align*}
        \partial_h J_h &= \partial_h\sum_{\sigma \in S_d} \text{sgn}(\sigma) \prod_{i=1}^d (\delta_{i,\sigma(i)} + h D_y\psi_{i, \sigma(i)}) 
        = \sum_{k=1}^d  \sum_{\sigma \in S_d} \text{sgn}(\sigma) \prod_{i=1}^d p_{k, i, \sigma(i)},
    \end{align*}
    where $S_d$ denotes the set of all permutations in dimension $d$, $\text{sgn}(\sigma)=\pm1$ depending on if the permutation is even or odd, and
    \begin{equation*}
        p_{k, i, \sigma(i)} = 
        \begin{cases}
            (\delta_{i,\sigma(i)} + h D_y\psi_{i, \sigma(i)}), &\text{ if } i\neq k, \\
            D_y\psi_{i, \sigma(i)}, &\text{ if } i = k.
        \end{cases}
    \end{equation*}
    This is a sum of multiple determinants (each time removing the identity and $h$ from a different column of the matrix $\ID+hD_y \psi$), hence $\partial_h J_h$ is also well-defined and by the same arguments as in \cref{lemma:lipschitz_cell_functions} also Lipschitz with respect to $h$.

    Next, we investigate the derivative of the cell solution $S_h^{-1}\xi_{h, j}$ with respect to $h$. To this end, we define the operator $E_j:[-a^*, a^*] \times V \to V^*$, where 
    \begin{equation*}
        V \coloneqq \left\{v \in H^1_\#(Y\setminus Y_0) : \int_{Y \setminus Y_0} v \di{y}  = 0 \right\},
    \end{equation*}
    and the operator is given by
    \begin{equation*}
        E_j(h, v) = \int_{Y \setminus Y_0} J_h F^{-1}_h \mathcal{K} F^{-T}_h \nabla_y v \cdot \nabla_y \ast \di{y} - \int_{\Gamma_0} J_h e_j \cdot n \ast \di{y}.
    \end{equation*}
    Here $\ast$ denotes a placeholder, for the argument of the functional.
    Note, that the cell solutions $S_h^{-1}\xi_{h, j}$ are implicitly defined by $E_j(h, S_h^{-1}\xi_{h, j}) = 0$.
    We want to utilize the implicit function theorem \cite[Theorem 1.41]{Hinze2009} to show the existence of the Fr\'echet-derivative of the cell solution and find a characterization of it.
    Given that $E_j$ is linear in $v\in V$ and $F_h, J_h$ are differentiable in $h\in[-a^*,a^*]$, it is straightforward to check that $E_j$ is Fr\'echet-differentiable. The derivative $\partial_u E_j(h, S_h^{-1}\xi_{h, j})$ is a bounded linear operator from $V \to V^*$ and defined by 
    \begin{equation*}
        \partial_u E_j(h, S_h^{-1}\xi_{h, j})[v] = \int_{Y \setminus Y_0} J_h F^{-1}_h \mathcal{K} F^{-T}_h \nabla_y v \cdot \nabla_y \ast \di{y}.
    \end{equation*}
    For $v = 0 \in V$ it holds $\|\partial_u E_j(h, S_h^{-1}\xi_{h, j})[v]\|_{V^*} = 0$ and for $v \neq 0$
    \begin{align*}
        \|\partial_u E_j(h, S_h^{-1}\xi_{h, j})[v]\|_{V^*} 
        = \hspace{-3pt} \sup_{\varphi \in V, \varphi \neq 0} \hspace{-3pt}\frac{|\partial_u E_j(h, S_h^{-1}\xi_{h, j})[v] (\varphi) |}{\|\varphi\|_{H^1(Y\setminus Y_0)}} 
        \geq \frac{|\partial_u E_j(h, S_h^{-1}\xi_{h, j})[v] (v) |}{\|v \|_{H^1(Y\setminus Y_0)}} 
        \geq C \|v \|_{H^1(Y\setminus Y_0)},
    \end{align*}
    where in the last inequality we applied assumption (A1), \cref{lemma:lipschitz_transformation} (i) and the Poincar\'e inequality. Therefore, $\|\partial_u E_j(h, S_h^{-1}\xi_{h, j})[v]\|_{V^*} \geq C \|v \|_{H^1(Y\setminus Y_0)}$ for all $v \in V$. With \cite[Theorem 6.7.1]{suhubi2003} we can conclude that $\partial_u E_j(h, S_h^{-1}\xi_{h, j})$ is injective and the image is closed, hence the inverse exists. Additionally, the inverse is also bounded. By the implicit function theorem, we obtain that the solution operator $h \mapsto S_h^{-1}\xi_{h, j}$ is well-defined and Fr\'echet-differentiable, where the derivative is given by
    \begin{equation*}
        \begin{aligned}
            &               &  \partial_u E_j(h, S_h^{-1}\xi_{h, j}) [\partial_h (S_h^{-1}\xi_{h, j})] &= -\partial_h E_j(h, S_h^{-1}\xi_{h, j}) 
            \\
            &\Leftrightarrow& \int_{Y\setminus Y_0} J_h F^{-1}_h \mathcal{K} F^{-T}_h \nabla_y (\partial_h (S_h^{-1}\xi_{h, j})) \cdot \nabla_y\ast \di{y} &= - \int_{Y\setminus Y_0} \partial_h\left(J_h F^{-1}_h \mathcal{K} F^{-T}_h\right) \nabla_y (S_h^{-1}\xi_{h, j}) \cdot \nabla_y\ast \di{y} 
            \\
            & & &+ \int_{\Gamma_0}\partial_h J_h e_j\cdot n \ast \di{y},
        \end{aligned}
    \end{equation*}
    understood as an element in $V^*$. With the same arguments as for the estimate in \cref{eq:lipschitz_xi_h} one can show that $\partial_h (S_h^{-1}\xi_{h, j})$ is Lipschitz in $h$ as well. 
    Finally, we conclude that $\partial_h K_{ij}$ is well-defined and Lipschitz in $h$ since all parts of the right-hand side of \cref{eq:partial_of_K} are bounded and Lipschitz. 
\end{proof}
The above regularity ensures a quadratic error order if we utilize linear interpolation:
\begin{equation*}
    |K(h) - K_\text{lin}(h)| \leq C (\Delta h)^2\quad
    \text{for all } h \in [-a^*, a^*].
\end{equation*}
\begin{remark}\label{remark:Lipschitz_constant_of_derivative}
    Since $F_h^{-1}$ and $J_h$ are smooth regarding $h$, the implicit function theorem can also be used to show the existence of higher derivatives of the solution operator. Therefore, higher-order interpolation schemes can be applied as well. But we have to note, that the Lipschitz constant grows exponentially with $\nicefrac{1}{a^*}$. E.g., for the first derivative we see in the above proof, that $\| \partial_h F_h^{-1}\| \leq C \|D_y \psi\| \leq \nicefrac{5C}{a^*}$, which leads to
    \begin{equation*}
        |\partial_h K(h + \Delta h) - \partial_h K(h)| \leq C \left(1 + \frac{1}{(a^*)^2} \right) \Delta h.
    \end{equation*}
    This means that high-order interpolations may need a small step size $\Delta h$ to ensure the desired convergence order.
\end{remark}
\begin{remark}
    By assumption (A1) we have that the underlying conduction matrix $\mathcal{K}$ is not dependent on the space variable. For some applications this may not hold and $\mathcal{K}$ may be a function of $y$. In this case the results of \cref{sec:analysis} are still valid under the condition that $\mathcal{K}$ is Lipschitz in regard to $y$. The property $K \in C^{1,1}([-a^*, a^*], \R^{d\times d})$ requires the additional assumption that $\mathcal{K} \in C^{1,1}(Y\setminus Y_0)$. 
\end{remark}
In the following, $K_\text{pre}$ stands for the precomputed conductivity, while $\mathcal{E}_\text{pre}:= \sup_{|h|\leq a^*}|K(h)-K_\text{pre}(h)|$ denotes the corresponding interpolation error.
In the case that $K_\text{pre} = K_\text{lin}$, or a higher order interpolation, $K_\text{pre}$ is a Lipschitz continuous function in $h$ and the results of \cref{sec:analysis} can be transferred to ensure the existence of solutions for the precomputed system.
The important question is how the interpolation error propagates over time, this is analyzed in the following theorem.
\begin{theorem}[Error propagation of precomputation]\label{thm:error_precomputing}
    Let $(h, \Theta, \vartheta)$ denote the solution with the exact conductivity and $(h_\text{pre}, \Theta_\text{pre}, \vartheta_\text{pre})$ the solution with the precomputed values $K_\text{pre}$. Set $\Bar{T}^* = \min(T^*, T_\text{pre}^*)$ such that both solutions are defined on $(0, \Bar{T}^*)$. Assume that 
    $\partial_t \Theta_\text{pre}, \nabla \Theta_\text{pre} \in L^\infty((0, \Bar{T}^*) \times \Omega)$, 
    $\partial_t \vartheta_\text{pre}, \nabla_y \vartheta_\text{pre} \in L^\infty((0, \Bar{T}^*) \times \Omega \times Y_0)$. Then, there exists a $C > 0$ (depending on $\Bar{T}^*$) such that
    \begin{equation*}
        \|\Bar{\Theta}\|_{L^\infty((0, \Bar{T}^*); L^2(\Omega))} + \|\Bar{\vartheta}\|_{L^\infty((0, \Bar{T}^*); L^2(\Omega \times Y_0))} 
        + \|\nabla \Bar{\Theta}\|_{L^2((0, \Bar{T}^*) \times \Omega)}
        + \|\nabla \Bar{\vartheta}\|_{L^2((0, \Bar{T}^*) \times \Omega \times Y_0)}
        \leq C \mathcal{E}_\text{pre}, 
    \end{equation*}
    where $(\Bar{h}, \Bar{\Theta}, \Bar{\vartheta}) \coloneqq (h-h_\text{pre}, \Theta - \Theta_\text{pre}, \vartheta - \vartheta_\text{pre})_{|(0, \Bar{T}^*)}$ denotes the difference of both solutions.
\end{theorem}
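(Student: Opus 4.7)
The plan is to mirror the uniqueness argument of \cref{lemma:uniqueness} essentially verbatim, introducing a single additional forcing term that stems from the consistency error $\mathcal{E}_\text{pre}$. I would subtract the weak forms satisfied by $(h,\Theta,\vartheta)$ (which uses $K(h)$) and by $(h_\text{pre},\Theta_\text{pre},\vartheta_\text{pre})$ (which uses $K_\text{pre}(h_\text{pre})$), test with $(\bar{\Theta},\bar{\vartheta})$, and notice that all coefficients other than $K$ — namely $C(h)$, $c(h)$, $\kappa(h)$, $v(h)$, $L(h)$, $f(h)$ — are identical in both systems (only $K$ is replaced by its precomputed counterpart). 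Consequently, the terms $I_1,I_2,I_4,I_5,I_6,I_7$ of the uniqueness proof can be recycled unchanged, producing the coercive contribution
\begin{equation*}
\tfrac{1}{2}\ddt\!\left(\int_\Omega C(h)\bar{\Theta}^2\di{x}+\int_\Omega\!\int_{Y_0} c(h)\bar{\vartheta}^2\di{y}\di{x}\right)+c\,\|\nabla\bar{\Theta}\|^2_{L^2(\Omega)}+c\,\|\nabla_y\bar{\vartheta}\|^2_{L^2(\Omega\times Y_0)}
\end{equation*}
together with lower-order terms bounded by $C(\|\bar{\Theta}\|^2+\|\bar{\vartheta}\|^2+\|\bar{h}\|^2+\|\partial_t\bar{h}\|^2)$.

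The only genuinely new contribution arises in the macroscopic diffusion term, which I would split as
\begin{equation*}
K(h)\nabla\Theta-K_\text{pre}(h_\text{pre})\nabla\Theta_\text{pre}=K(h)\nabla\bar{\Theta}+\bigl(K(h)-K(h_\text{pre})\bigr)\nabla\Theta_\text{pre}+\bigl(K(h_\text{pre})-K_\text{pre}(h_\text{pre})\bigr)\nabla\Theta_\text{pre}.
\end{equation*}
The first summand gives the coercive term $c\|\nabla\bar{\Theta}\|^2_{L^2(\Omega)}$ thanks to \cref{lemma:lipschitz_parameters}(ii); the second is handled by the Lipschitz bound $|K(h)-K(h_\text{pre})|\leq C|\bar{h}|$ from \cref{lemma:lipschitz_parameters}(i) combined with the hypothesis $\nabla\Theta_\text{pre}\in L^\infty$, yielding $\leq C\|\bar{h}\|_{L^2(\Omega)}\|\nabla\bar{\Theta}\|_{L^2(\Omega)}$ exactly as in the uniqueness proof. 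The third term is the new consistency contribution and is controlled by
\begin{equation*}
\left|\int_\Omega\bigl(K(h_\text{pre})-K_\text{pre}(h_\text{pre})\bigr)\nabla\Theta_\text{pre}\cdot\nabla\bar{\Theta}\di{x}\right|\leq \mathcal{E}_\text{pre}\,\|\nabla\Theta_\text{pre}\|_{L^\infty(\Omega)}\,|\Omega|^{1/2}\,\|\nabla\bar{\Theta}\|_{L^2(\Omega)},
\end{equation*}
which after Young's inequality contributes an inhomogeneity $\leq C\mathcal{E}_\text{pre}^2$ plus a gradient term absorbable into the coercive $\|\nabla\bar{\Theta}\|^2$.

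Since $\partial_t h=\Theta-\Theta^\text{ref}$ holds in both the exact and the precomputed system, one has $\partial_t\bar{h}=\bar{\Theta}$ and $\|\bar{h}(t)\|_{L^2(\Omega)}^2\leq t\int_0^t\|\bar{\Theta}\|_{L^2(\Omega)}^2\di{\tau}$. Integrating the combined inequality from $0$ to $t\leq\bar{T}^*$, exploiting the uniform positivity of $C(h)$ and $c(h)$ (\cref{lemma:lipschitz_parameters}(ii) and \cref{lemma:lipschitz_transformation}(i)), and using the vanishing initial values $\bar{\Theta}(0)=\bar{\vartheta}(0)=\bar{h}(0)=0$, I arrive at
\begin{equation*}
\|\bar{\Theta}(t)\|^2_{L^2(\Omega)}+\|\bar{\vartheta}(t)\|^2_{L^2(\Omega\times Y_0)}+\int_0^t\!\bigl(\|\nabla\bar{\Theta}\|^2_{L^2(\Omega)}+\|\nabla_y\bar{\vartheta}\|^2_{L^2(\Omega\times Y_0)}\bigr)\di{\tau}\leq C\int_0^t\!\bigl(\|\bar{\Theta}\|^2_{L^2(\Omega)}+\|\bar{\vartheta}\|^2_{L^2(\Omega\times Y_0)}\bigr)\di{\tau}+C\bar{T}^*\mathcal{E}_\text{pre}^2.
\end{equation*}
Grönwall's inequality then closes the estimate with a constant $C=C(\bar{T}^*)$.

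The main obstacle is purely bookkeeping: one must verify that every extra cross-term produced by the discrepancy between $K(h)\nabla\Theta$ and $K_\text{pre}(h_\text{pre})\nabla\Theta_\text{pre}$ can be absorbed either into the coercive gradient norms (via Young's inequality) or into the Grönwall loop (via $\|\bar{h}\|$ and $\|\partial_t\bar{h}\|$, both controlled by $\bar{\Theta}$). The essential mechanism — that $\mathcal{E}_\text{pre}$ multiplies an $L^\infty$-controlled quantity $\nabla\Theta_\text{pre}$ and therefore enters the Grönwall inhomogeneity \emph{linearly} — is precisely why the higher regularity of the \emph{precomputed} solution (not the exact one) is assumed in the hypotheses.
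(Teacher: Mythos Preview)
Your proposal is correct and matches the paper's proof essentially line by line: the paper also reduces to the uniqueness argument of \cref{lemma:uniqueness}, treating only the macroscopic diffusion term $I_3$ differently by writing $K(h)-K_\text{pre}(h_\text{pre})=(K(h)-K(h_\text{pre}))+(K(h_\text{pre})-K_\text{pre}(h_\text{pre}))$ and estimating the two pieces via Lipschitz continuity and $\mathcal{E}_\text{pre}$ respectively, then closing with Gr\"onwall.
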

\begin{proof}
    The proof follows the same idea as the uniqueness proof in \cref{lemma:uniqueness}. Here, we only have to consider the macroscopic conductivity term in more detail, all other terms can be estimated as previously shown. For the integral $I_3$ in the proof of \cref{lemma:uniqueness}, we have
    \begin{align*}
        I_3 = 
        \int_{\Omega}K(h)\nabla\bar{\Theta}\cdot\nabla\bar{\Theta}\di{x}
        + 
        \int_{\Omega} \left[K(h) - K_\text{pre}(h_\text{pre})\right] \nabla \Theta_\text{pre} \nabla\bar{\Theta}\di{x}.
    \end{align*}
    The integral on the right can be further estimated by
    \begin{align*}
        \int_{\Omega} &| \left[K(h) - K_\text{pre}(h_\text{pre})\right] \nabla\Theta_\text{pre} \cdot\nabla\bar{\Theta}|\di{x} 
        \\
        &\leq \int_{\Omega}|(K(h)  - K(h_\text{pre} )) \nabla\Theta_\text{pre}\cdot\nabla\bar{\Theta}|\di{x}
        + \int_{\Omega}|(K(h_\text{pre} ) - K_\text{pre}(h_\text{pre})) \nabla\Theta_\text{pre} \cdot\nabla\bar{\Theta}|\di{x} 
        \\
        &\leq C \|\nabla \Theta_\text{pre}\|_{L^\infty(\Omega)} \|\nabla \Bar{\Theta}\|_{L^2(\Omega)}\left(\|\Bar{h}\|_{L^2(\Omega)} + \mathcal{E}_\text{pre} \right).
    \end{align*}
    Now plugging this estimate into the proof steps of \cref{lemma:uniqueness}, we obtain, for any time interval $(0, t) \subset (0, \bar{T}^*)$, the following estimate
    \begin{multline*}
        \|\bar{\Theta}(t, \cdot)\|_{L^2(\Omega)}^2
        +\|\bar{\vartheta}(t, \cdot)\|_{L^2(\Omega\times Y_0)}^2
        +\|\nabla\bar{\Theta}\|^2_{L^2((0,t) \times \Omega)}
        +\|\nabla_y\bar{\vartheta}\|^2_{L^2((0,t) \times \Omega \times Y_0)}
        \\
        \leq C \mathcal{E}_\text{pre}^2 + C (1 + t^2) \left(\|\bar{\Theta}\|_{L^2((0, t) \times \Omega)}^2 + \|\bar{\vartheta}\|_{L^2((0, t) \times \Omega \times Y_0)}^2\right).
    \end{multline*}
    Applying the Grönwall inequality leads to the desired result.    
\end{proof}
\subsection{Resolving the nonlinearity.} Even with the precomputing strategy at hand, we are still facing a nonlinear two-scale problem. We propose a semi-implicit time-stepping method to resolve the nonlinearity, which is often applied to nonlinear equations in numerical simulations. To this end, given a time step $\Delta t$ we define the discrete time point $t_i = i \Delta t$. Then we are looking at each $i > 0$ for the time-discrete solution $(\Theta_i, \vartheta_i) \in \mathrm{X}$ of
\begin{equation}\label{eq:time_discrete_equation}
    \begin{split}
        &\int_\Omega C(h_i) \frac{\Theta_i - \Theta_{i-1}}{\Delta t} \Phi \di{x}
        + \int_\Omega \int_{Y_0} c(h_i)  \frac{\vartheta_i - \vartheta_{i-1}}{\Delta t} \phi \di{y} \di{x}
        + \int_\Omega K_\text{pre}(h_i) \nabla \left(\frac{\Theta_i + \Theta_{i-1}}{2}\right) \cdot \nabla \Phi \di{x} 
        \\
        &+ \int_\Omega \int_{Y_0} \kappa(h_i) \nabla_y \frac{\vartheta_i + \vartheta_{i-1}}{2} \cdot \nabla_y \phi \di{y} \di{x}
        +  \hspace{-2pt}\int_\Omega\int_{Y_0} \vartheta_i v(h_i)\cdot\nabla_y\phi\di{y}\di{x}
        = \hspace{-2pt}\int_{\Omega} \left(F_i- L(h_i)\frac{h_i - h_{i-1}}{\Delta t}\right)\Phi\di{x}
        \\
        &+\int_\Omega\int_{Y_0}f(h_i)\phi\di{y}\di{x} 
        - \int_{\Omega} \partial_h C(h_i) \frac{h_i - h_{i-1}}{\Delta t} \Theta_{i-1} \Phi \di{x}
        - \int_{\Omega} \partial_h c(h_i) \frac{h_i - h_{i-1}}{\Delta t} \vartheta_{i-1} \phi \di{x}
    \end{split}
\end{equation}
for all $(\Phi,\phi) \in \mathrm{X}$ where 
\begin{equation*}
    \mathrm{X} \coloneqq \left\{
        (\Phi,\phi) \in H^1(\Omega) \times L^2(\Omega, H^1(Y_0)) : \Phi(x) = \phi(x, y) \text{ a.e. on } \Omega \times \Gamma_0 
    \right\}.
\end{equation*}
To find $(\Theta_i, \vartheta_i)$ a differential equation has to be solved at each time step. But we now linearize the problem by using an explicit scheme to compute the height function $h$, to be precise
\begin{equation*}
    h_i = h_{i-1} + \Delta t (\Theta_{i-1} - \Theta^{ref}) \quad \text{ for } i > 0 \quad \text{and} \quad h_0 = 0.
\end{equation*}
This also leads to the simplification of $\nicefrac{(h_i - h_{i-1})}{\Delta t} = \Theta_{i-1} - \Theta^{ref}$ in the above scheme. In (\ref{eq:time_discrete_equation}) we used the product rule to split the time derivative into the derivative of $\Theta$ and of $C$, respectively for $\vartheta$ and $c$. This simplifies the proof for the time stepping error. Note, that also the more natural discretization 
\begin{equation*}
    \partial_t(C(h) \Theta) \approx \frac{C(h_i) \Theta_i - C(h_{i-1}) \Theta_{i-1}}{\Delta t}
\end{equation*}
could be used and only deviates from the above scheme by a term of order $\Delta t$. For the gradients, we utilize an average between the current and previous time step since this fits more naturally in the following proof idea. Note, that this scheme requires $\nabla \Theta_0 \in L^2(\Omega)$ and $\nabla_y \vartheta_0 \in L^2(\Omega \times Y_0)$. But again, just $\nabla \Theta_i$, and $\nabla_y \vartheta_i$, could be used.

If $|h_i| \leq a^*$, the solution of the system (\ref{eq:time_discrete_equation}) exists and is unique since it is a linear time-discretized parabolic two-scale equation. Given that $\Theta_0 \in L^\infty(\Omega)$ we can always carry out one time step if $\Delta t$ is small enough. To show that the discrete solution exists on a larger time interval $(0, T^*_\text{dis})$, we need to show that the discrete solution $\Theta_i$ can be bounded in $L^\infty(\Omega)$ independent of the height function. For this, we can proceed as in \cref{lemma:wellposedness_given_h}, e.g., we would need to carry out the following steps:
\begin{enumerate}
    \item Choose a $M \geq 1$ and a $N \in \N$, then define the time horizon $T^*_\text{dis}=\nicefrac{a^*}{M^4}$ and step $\Delta t = \nicefrac{T^*_\text{dis}}{N}$.
    \item Start with a fixed height function $\{h_i\}_{i=0}^N$ defined by 
    \begin{equation*}
        h_i = h_{i-1} + \Delta t b_i, \quad h_0 = 0 \quad \text{ and }
        \quad \|b_i\|_{L^\infty} \leq M.
    \end{equation*}
    \item Derive a $L^\infty(\Omega)$ bound for the solution of \cref{eq:time_discrete_equation} for the prescribed height function $\{h_i\}_{i=0}^N$ independent of $M$. This follows with similar computations as in \cref{lemma:wellposedness_given_h} and by testing with $\Phi = \Theta_i + \Theta_{i-1}, \phi = \vartheta_i + \vartheta_{i-1}$ and summing over all time steps. As the proof is similar to the previous one, it will not be presented in detail here.
    \item Choose the $M$ corresponding to the solution bound and obtain that the discrete method is reasonable and the solution exists on a nonempty interval $(0, T^*_\text{dis})$.
\end{enumerate}
Note, that the above procedure also ensures that for a smaller time step $\Delta t$ the discrete solution exists, at least, on the same time interval. The error we introduce through this scheme is analyzed in the next lemma.
\begin{lemma}[Error of time stepping scheme]\label{lemma:time_stepping}
    Let $(\hat{h}, \hat{\Theta}, \hat{\vartheta})$ denote the solution that is continuous in time, exists on the time interval $(0, T^*)$ and uses the precomputed values $K_\text{lin}$. Denote with $\{h_i, \Theta_i, \vartheta_i\}_{i=0}^N$ the time discrete solution of \cref{eq:time_discrete_equation} that exists on $[0, T^*_\text{dis}]$, with $K_\text{pre} = K_\text{lin}$. Let the assumptions

    \vspace{\medskipamount}
    \begin{minipage}[t]{0.49\linewidth}
      \begin{itemize}
        \item $\hat{\Theta} \in W^{2, \infty}((0, T^*); W^{1, \infty}(\Omega))$,
        \item $F \in C(\overline{S \times \Omega})$,
        \item $\Delta t \leq \nicefrac{C(a^*)}{8K(a^*)}$,
      \end{itemize}
    \end{minipage}
    \begin{minipage}[t]{0.49\linewidth}
      \begin{itemize}
        \item $\hat{\vartheta} \in W^{2, \infty}((0, T^*); W^{1, \infty}(\Omega \times Y_0))$,
        \item $f \in C(\overline{S \times \Omega}; C^{0, 1}(\overline{Y_0}))$,
      \end{itemize}
    \end{minipage}
    \vspace{\medskipamount}
    
    \noindent be fulfilled. Let $N_t \in \N$ be such that $\Delta t N_t < \min{(T^*, T^*_\text{dis})}$ and $\Delta t(1 + N_t) \geq \min{(T^*, T^*_\text{dis})}$. Then, there exists a $C > 0$ (depending on $(\hat{\Theta}, \hat{\vartheta})$) such that
    \begin{equation*}
        \sup_{i=1,\dots, N_t} \|\hat{\Theta}(t_i) - \Theta_i\|_{H^1(\Omega)} + \|\hat{\vartheta}(t_i) - \vartheta_i\|_{H^1(\Omega)} + \|\hat{h}(t_i) - h_i\|_{L^2(\Omega)} \leq C (\Delta t + \Delta h). 
    \end{equation*}
\end{lemma}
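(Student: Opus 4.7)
The plan is to derive coupled discrete energy estimates for the errors $e_i^\Theta := \hat\Theta(t_i) - \Theta_i$, $e_i^\vartheta := \hat\vartheta(t_i) - \vartheta_i$, $e_i^h := \hat h(t_i) - h_i$ (all vanishing at $i=0$) and close them with discrete Grönwall. Since the conclusion is in $H^1$ rather than $L^2$, I would exploit the Crank--Nicolson-type averaging of the gradient terms in \cref{eq:time_discrete_equation} by testing the error equation with the \emph{difference} test functions $\Phi = e_i^\Theta - e_{i-1}^\Theta$ and $\phi = e_i^\vartheta - e_{i-1}^\vartheta$. This causes the diffusion term to telescope as $\tfrac12(\|\nabla e_i^\Theta\|^2 - \|\nabla e_{i-1}^\Theta\|^2)$ (up to a small $\mathcal{O}(\Delta t)$-commutator coming from $K_\text{lin}(h_i) - K_\text{lin}(h_{i-1})$), while the coercivity of $C(h_i),c(h_i)$ from \cref{lemma:lipschitz_parameters}~(ii) yields a positive $(\Delta t)^{-1}\|e_i^\Theta - e_{i-1}^\Theta\|^2$ contribution from the time-derivative term.

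Before attacking the temperature equation, I would estimate $e_i^h$. Using $\partial_t\hat h = \hat\Theta - \Theta^{ref}$ together with the assumed $W^{2,\infty}$-regularity in time, a second-order Taylor expansion gives $\hat h(t_i) - \hat h(t_{i-1}) = \Delta t(\hat\Theta(t_{i-1}) - \Theta^{ref}) + \mathcal{O}((\Delta t)^2)$ in $L^\infty(\Omega)$; subtracting the explicit update for $h_i$ and summing in~$i$ yields
\[
\|e_i^h\|_{L^2(\Omega)}^2 \leq C \Delta t \sum_{j<i} \|e_j^\Theta\|_{L^2(\Omega)}^2 + C(\Delta t)^2,
\]
which will feed the coefficient residuals below.

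To obtain the error equation, I would evaluate the continuous weak form \cref{eq:weak_form_reference} (with $K_\text{lin}$ in place of $K$) at $t = t_i$, rewrite each continuous time derivative and each instantaneous gradient in terms of its semi-discrete analog via Taylor expansion, and subtract \cref{eq:time_discrete_equation}. This leaves an equation with the same bilinear structure as \cref{eq:time_discrete_equation}, driven by $(e_i^\Theta, e_i^\vartheta)$ and a residual $R_i$ bounded by
\[
\|R_i\|_{\mathrm{X}^*} \leq C\bigl(\Delta t + \Delta h + \|e_i^h\|_{L^2(\Omega)} + \|e_{i-1}^h\|_{L^2(\Omega)}\bigr).
\]
Here the $\Delta t$-term comes from the time-truncation (controlled by $W^{2,\infty}$-regularity and the continuity of $F,f$), the $\|e^h\|$-terms from evaluating the coefficients ($C, c, K_\text{lin}, \kappa, L, f, v$) at $h_i$ versus $\hat h(t_i)$ via the Lipschitz estimates of \cref{lemma:lipschitz_transformation,lemma:lipschitz_parameters}, and the $\Delta h$-term from the non-$C^1$ structure of $K_\text{lin}$ (cf.\ \cref{remark:Lipschitz_constant_of_derivative}) in any consistency term that would otherwise require $\partial_h K$ in place of $\partial_h K_\text{lin}$. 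Testing with the difference functions, summing from $i = 1$ to $N_t$, absorbing the small commutators using the step-size assumption $\Delta t \leq C(a^*)/(8K(a^*))$, and invoking discrete Grönwall yields an $\mathcal{O}((\Delta t)^2 + (\Delta h)^2)$ bound on the squared $H^1$-norms of $e_i^\Theta, e_i^\vartheta$. A parallel test with $\Phi = e_i^\Theta + e_{i-1}^\Theta$, $\phi = e_i^\vartheta + e_{i-1}^\vartheta$ gives the companion $L^\infty$-in-time $L^2$-control, and taking square roots yields the claimed $\mathcal{O}(\Delta t + \Delta h)$-bound.

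The main obstacle I anticipate is the advective term $\int_\Omega\!\!\int_{Y_0} \vartheta_i\, v(h_i) \cdot \nabla_y \phi$, which neither telescopes nor vanishes under the differencing trick and couples $\vartheta_i$ (rather than an average) to the test function's gradient. I would split $\vartheta_i = \tfrac12(e_i^\vartheta + e_{i-1}^\vartheta) + \tfrac12(e_i^\vartheta - e_{i-1}^\vartheta) + \hat\vartheta(t_i)$: the last piece enters the residual through the Lipschitz variation of $v(h)$ (contributing an extra $\|e^h\|$-term), the averaged piece is controlled by Young's inequality balanced against the telescoping diffusion, and the critical difference-piece must be absorbed into the coercive $(\Delta t)^{-1}\|e_i^\vartheta - e_{i-1}^\vartheta\|^2$-term from the time derivative. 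It is precisely this absorption, together with the diffusion commutator mentioned above, that fixes the step-size restriction in the hypothesis.
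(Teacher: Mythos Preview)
Your plan matches the paper's proof in its essentials: estimate $e^h_i$ first, subtract the continuous and discrete weak forms, test with the difference $\bar\Theta_i-\bar\Theta_{i-1}$, exploit the Crank--Nicolson averaging so the diffusion term telescopes, sum in $i$, and close with discrete Gr\"onwall. The paper, following Ewing, also treats only the macroscopic equation explicitly and declares the microscopic one analogous.

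Two points where your sketch deviates from, or glosses over, the paper's argument are worth flagging. First, your packaging of all non-telescoping contributions into a single residual $\|R_i\|_{\mathrm X^*}\leq C(\Delta t+\Delta h+\|e^h\|)$ hides the main technical device: the cross-terms pairing $\nabla\hat\Theta$ with $\nabla(\bar\Theta_i-\bar\Theta_{i-1})$ (the paper's $I_{12}$ and $I_{13}$) cannot be bounded at a fixed $i$ because there is no control on $\|\nabla(\bar\Theta_i-\bar\Theta_{i-1})\|$. The paper handles these by \emph{Abel summation} (summation by parts), shifting the time-difference from $\nabla\bar\Theta$ onto the coefficient; the $\Delta h$-loss then enters exactly when this produces $\partial_h K_{\mathrm{lin}}(h_m)-\partial_h K_{\mathrm{lin}}(\hat h(t_m))$, for which only $|\cdot|\leq C|\bar h_m|+C\Delta h$ is available. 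Your identification of the $\Delta h$-source is correct in spirit, but the mechanism is the Abel trick, not a pointwise residual bound. Second, the paper obtains the missing $L^2$-control not by a parallel test with $\bar\Theta_i+\bar\Theta_{i-1}$ but by the elementary identity $\|\bar\Theta_i\|^2\leq 2\Delta t\langle d_t\bar\Theta_i,\bar\Theta_{i-1}\rangle+(\Delta t)^2\|d_t\bar\Theta_i\|^2+\|\bar\Theta_{i-1}\|^2$, and it is \emph{this} step (absorbing $(\Delta t)^2\|d_t\bar\Theta_i\|^2$ into the already-accumulated $\Delta t\,C(a^*)\|d_t\bar\Theta_i\|^2$) that uses the hypothesis $\Delta t\leq C(a^*)/(8K(a^*))$, not the diffusion commutator or the advection term as you suggest.
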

\begin{proof} 
    We follow the proof ideas of \cite[Section 3 and 4]{Ewing1978} where numerical approaches for similar nonlinear problems are analyzed. The main difference in our case is, that we consider a two-scale system, the nonlinearity is coupled to the height function $h$, a less regular conductivity, and a different right-hand side because of $C(h)$. But the general idea from \cite{Ewing1978} can be transferred to our case. We denote by $\Bar{\cdot}_i$ the difference between the continuous and discrete solution at time point $t_i$, e.g., $\Bar{\Theta}_i = \hat{\Theta}(t_i) - \Theta_i$. We start by investigating the difference between the height functions of both solutions. By definition of both functions, we obtain
    \begin{align*}
        \|\Bar{h}_i\|_{L^2(\Omega)} 
        &= \|\hat{h}(t_i) - h_i\|_{L^2(\Omega)}
        \\
        &\leq \|\hat{h}(t_{i-1}) - h_{i-1}\|_{L^2(\Omega)} 
        +  \left \| \int_{t_{i-1}}^{t_i} (\hat{\Theta}(s) - \Theta^{ref}) \di{s} - \Delta t (\Theta_{i-1} - \Theta^{ref}) \right \|_{L^2(\Omega)} 
        \\
        &\leq \|\hat{h}(t_{i-1}) - h_{i-1}\|_{L^2(\Omega)} 
        + \Delta t \|\Bar{\Theta}_{i-1}\|_{L^2(\Omega)}
        + \left \| \int_{t_{i-1}}^{t_i} \int_{t_{i-1}}^s \partial_t \hat{\Theta}(\tau) \di{\tau} \di{s} \right \|_{L^2(\Omega)}
        \\
        &\leq \|\hat{h}(t_{i-1}) - h_{i-1}\|_{L^2(\Omega)} 
        + \Delta t \|\Bar{\Theta}_{i-1}\|_{L^2(\Omega)} + C (\Delta t )^2
    \end{align*}
    in the estimates we used the fundamental theorem of calculus and that $\partial_t \hat{\Theta}$ is bounded. Iteratively applying the above argument leads to
    \begin{equation}\label{eq:proof_h_estimate}
        \|\Bar{h}_i\|_{L^2(\Omega)} \leq \sum_{n = 1}^{i-1} \left[ \Delta t \|\Bar{\Theta}_{n}\|_{L^2(\Omega)} + C (\Delta t )^2 \right].
    \end{equation}
    Next, we derive the estimates for the temperature. To this end, we only show the error estimate for the macroscopic function $\Theta$. The reason for this is, that the estimates for $\vartheta$ follow with the same steps since the microscopic equation has a similar structure. This allows us to reduce the length of the proof considerably. Therefore, let the continuous solution $\Hat{\Theta}$ fulfill
    \begin{equation}\label{eq:proof_time_stepping_cont_eq}
        \int_\Omega C(\Hat{h}) \partial_t \Hat{\Theta} \Phi \di{x} 
        + \int_\Omega \partial_h C(\Hat{h}) \partial_t \Hat{h} \Hat{\Theta} \Phi \di{x}
        + \int_\Omega K_\text{lin}(\Hat{h}) \nabla \Hat{\Theta} \cdot \nabla \Phi \di{x} 
        = \int_\Omega  L(\Hat{h}) \partial_t \Hat{h} \Phi \di{x}
    \end{equation}
    and the time-discrete solution 
    \begin{equation}\label{eq:proof_time_stepping_discrete_eq}
        \begin{split}
            \int_\Omega C(h_i) \frac{\Theta_i - \Theta_{i-1}}{\Delta t} &\Phi \di{x}
            + \int_{\Omega} \partial_h C(h_i) (\Theta_{i-1} - \Theta^{ref})  \Theta_{i-1} \Phi \di{x}
            \\
            &+ \int_\Omega K_\text{lin}(h_i) \nabla \left(\frac{\Theta_i + \Theta_{i-1}}{2}\right) \cdot \nabla \Phi \di{x}  
            = \int_{\Omega}  L(h_i) (\Theta_{i-1} - \Theta^{ref}) \Phi \di{x}.
        \end{split}
    \end{equation}
    To derive an error estimate at  $t_i$, we consider (\ref{eq:proof_time_stepping_cont_eq}) localized at $t_i$, subtract  the \cref{eq:proof_time_stepping_cont_eq} and (\ref{eq:proof_time_stepping_discrete_eq}) from each other and test with $\Phi = \Bar{\Theta}_i - \Bar{\Theta}_{i-1}$. We obtain four terms which we estimate individually. Let us introduce the discrete time derivative $d_t$ which is defined by $d_t \Theta_i = \nicefrac{\Theta_i - \Theta_{i-1}}{\Delta t}$. Additionally, we abuse the notation and write $d_t \hat{\Theta}_i = \nicefrac{\hat{\Theta}(t_i) - \hat{\Theta}(t_{i-1})}{\Delta t}$. We start with the time derivative term, which yields
    \begin{align*}
        \int_\Omega &C(h_i) d_t\Theta_i \Phi \di{x} - \int_\Omega C(\Hat{h}) \partial_t \Hat{\Theta}(t_i) \Phi \di{x} 
        = \underbrace{\int_\Omega C(h_i) d_t\Theta_i \Phi \di{x} 
        - \int_\Omega C(h_i) d_t\hat{\Theta}_i \Phi \di{x}}_{=I_1}
        \\
        &+ \underbrace{\int_\Omega C(h_i) d_t\hat{\Theta}_i \Phi \di{x} 
        - \int_\Omega C(\Hat{h}(t_i)) d_t\hat{\Theta}_i \Phi \di{x}}_{=I_2}
        + \underbrace{\int_\Omega C(\Hat{h}(t_i)) d_t\hat{\Theta}_i \Phi \di{x}
        - \int_\Omega C(\Hat{h}(t_i)) \partial_t \Hat{\Theta}(t_i) \Phi \di{x}}_{=I_3}.
    \end{align*}
    For terms on the right-hand side, it holds
    \begin{align*}
        I_1 &= \Delta t \int_{\Omega} C(h_i) (d_t \Bar{\Theta}_i)^2 \di{x} \geq \Delta t C(a^*)  \|d_t \Bar{\Theta}_i\|_{L^2(\Omega)}^2,
        \\
        |I_2| &\leq C \|\Bar{h}_i\|_{L^2(\Omega)} \|\Bar{\Theta}_i - \Bar{\Theta}_{i-1}\|_{L^2(\Omega)},
        \\
        |I_3| &\leq C \Delta t \|\partial_{tt} \hat{\Theta}(s) \di{s} \|_{L^\infty((0, T^*) \times \Omega)} \|\Bar{\Theta}_i - \Bar{\Theta}_{i-1}\|_{L^2(\Omega)} \leq C (\Delta t)^3 + \e \Delta t \|d_t \Bar{\Theta}_i\|_{L^2(\Omega)}^2.
    \end{align*}
    Here, we used for $I_2$ and $I_3$ that $C$ is Lipschitz, the Hölder inequality, and the Young inequality with a small $\e > 0$. $I_2$ can be estimated further by utilizing \cref{eq:proof_h_estimate},
    \begin{align*}
        |I_2|
        &\leq  C \sum_{n = 1}^{i-1} \left[ \Delta t \|\Bar{\Theta}_{n}\|_{L^2(\Omega)} + C (\Delta t )^2 \right] \|\Bar{\Theta}_i - \Bar{\Theta}_{i-1}\|_{L^2(\Omega)} 
        \\
        &= C \sum_{n = 1}^{i-1} \left[ \Delta t \|\Bar{\Theta}_{n}\|_{L^2(\Omega)} \Delta t \|d_t \Bar{\Theta}_i\|_{L^2(\Omega)} + C (\Delta t )^2 \Delta t \|d_t \Bar{\Theta}_i\|_{L^2(\Omega)} \right] 
        \\
        &\leq C \sum_{n = 1}^{i-1} \left[(\Delta t)^2 \|\Bar{\Theta}_{n}\|_{L^2(\Omega)}^2 + \e (\Delta t)^2 \|d_t \Bar{\Theta}_i\|_{L^2(\Omega)}^2 + C (\Delta t )^4 + \e (\Delta t)^2 \|d_t \Bar{\Theta}_i\|_{L^2(\Omega)}^2 \right] 
        \\ &\leq  C (\Delta t )^3 + C \e \Delta t \|d_t \Bar{\Theta}_i\|_{L^2(\Omega)}^2 + C \Delta t \sum_{n = 1}^{i-1} \Delta t \|\Bar{\Theta}_{n}\|_{L^2(\Omega)}^2.  
    \end{align*}
    Next, the term with $\partial_h C$, where we obtain
    \begingroup
    \allowdisplaybreaks
    \begin{align*}
        \int_{\Omega} \partial_h C(h_i) &(\Theta_{i-1} - \Theta^{ref})  \Theta_{i-1} \Phi \di{x} - \int_\Omega \partial_h C(\Hat{h}(t_i)) (\Hat{\Theta}(t_i) - \Theta^{ref}) \Hat{\Theta}(t_i) \Phi \di{x} 
        \\
        &= \underbrace{\int_{\Omega} \partial_h C(h_i) (\Theta_{i-1} - \Theta^{ref})  \Theta_{i-1} \Phi \di{x} 
        - \int_\Omega \partial_h C(h_i) (\Theta_{i-1} - \Theta^{ref})  \hat{\Theta}(t_{i-1}) \Phi \di{x}}_{=I_4}
        \\
        &+ \underbrace{\int_\Omega \partial_h C(h_i) (\Theta_{i-1} - \Theta^{ref})  \hat{\Theta}(t_{i-1}) \Phi \di{x} -
        \int_\Omega \partial_h C(h_i) (\hat{\Theta}(t_{i-1}) - \Theta^{ref})  \hat{\Theta}(t_{i-1}) \Phi \di{x}}_{=I_5}
        \\
        &+ \underbrace{
        \int_\Omega \partial_h C(h_i) (\hat{\Theta}(t_{i-1}) - \Theta^{ref})  \hat{\Theta}(t_{i-1}) \Phi \di{x} - \int_\Omega \partial_h C(\Hat{h}(t_i)) (\Hat{\Theta}(t_{i-1}) - \Theta^{ref}) \hat{\Theta}(t_{i-1})\Phi \di{x} }_{=I_6}
        \\
        &+ \underbrace{
        \int_\Omega \partial_h C(\Hat{h}(t_i)) (\hat{\Theta}(t_{i-1}) - \Theta^{ref})  \hat{\Theta}(t_{i-1}) \Phi \di{x} - \int_\Omega \partial_h C(\Hat{h}(t_i)) (\Hat{\Theta}(t_i) - \Theta^{ref}) \Hat{\Theta}(t_i) \Phi \di{x} }_{=I_7}
    \end{align*}
    \endgroup
    Using that $\partial_h C$ is well-defined and bounded (since $\partial_h J_h$ is well-defined and bounded; see the proof of \cref{lemma:K_differentiable}) and that the solutions are bounded, we can obtain with similar arguments as before
    \begin{align*}
        |I_4| + |I_5| 
        &\leq C \|\Bar{\Theta}_{i-1}\|_{L^2(\Omega)}  \|\Bar{\Theta}_i - \Bar{\Theta}_{i-1}\|_{L^2(\Omega)}
        \leq C \Delta t \|\Bar{\Theta}_{i-1}\|_{L^2(\Omega)}^2 + \e \Delta t  \|d_t \Bar{\Theta}_i\|_{L^2(\Omega)}^2,
        \\
        |I_6| &\leq C \|\Bar{h}_i\|_{L^2(\Omega)} \|\Bar{\Theta}_i - \Bar{\Theta}_{i-1}\|_{L^2(\Omega)},
        \\
        |I_7| &\leq C \Delta t \|\partial_t \hat{\Theta}\|_{L^\infty((0, T^*) \times \Omega)} \|\Bar{\Theta}_i - \Bar{\Theta}_{i-1}\|_{L^2(\Omega)} \leq C (\Delta t)^3 + \e \Delta t \|d_t \Bar{\Theta}_i\|_{L^2(\Omega)}^2.
    \end{align*}
    Since the term containing $ L$ is comparable to the previous ones, we first handle it and afterward estimate the gradients. By introducing the mixed terms, we have the following equality 
    \begingroup
    \allowdisplaybreaks
    \begin{align*}
        \int_{\Omega}  L(h_i) (\Theta_{i-1} - \Theta^{ref}) \Phi \di{x} &- \int_\Omega  L(\Hat{h}(t_i)) (\hat{\Theta}(t_{i}) - \Theta^{ref}) \Phi \di{x} 
        \\
        &=  \underbrace{\int_{\Omega}  L(h_i) (\Theta_{i-1} - \Theta^{ref}) \Phi \di{x} - \int_{\Omega}  L(h_i) (\Hat{\Theta}(t_{i-1}) - \Theta^{ref}) \Phi \di{x}}_{=I_8}
        \\ 
        &+ \underbrace{\int_{\Omega}  L(h_i) (\Hat{\Theta}(t_{i-1}) - \Theta^{ref}) \Phi \di{x} - \int_{\Omega}  L(h_i) (\Hat{\Theta}(t_{i}) - \Theta^{ref}) \Phi \di{x}}_{=I_9}
        \\
        &+
        \underbrace{\int_{\Omega}  L(h_i) (\Hat{\Theta}(t_{i}) - \Theta^{ref}) \Phi \di{x} - \int_\Omega  L(\Hat{h}(t_i)) (\hat{\Theta}(t_{i}) - \Theta^{ref}) \Phi \di{x}}_{=I_{10}}
    \end{align*}
    \endgroup
    All of $I_8, I_9$ and $I_{10}$ are of the same structure as before and we can find the following estimates
    \begin{align*}
        |I_8| &\leq  C \Delta t \|\Bar{\Theta}_{i-1}\|_{L^2(\Omega)}^2 + \e \Delta t  \|d_t \Bar{\Theta}_i\|_{L^2(\Omega)}^2,
        \\
        |I_9| &\leq C (\Delta t)^3 + \e \Delta t \|d_t \Bar{\Theta}_i\|_{L^2(\Omega)}^2,
        \\
        |I_{10}| &\leq C \|\Bar{h}_i\|_{L^2(\Omega)} \|\Bar{\Theta}_i - \Bar{\Theta}_{i-1}\|_{L^2(\Omega)}.
    \end{align*}
    Let us next consider the gradient terms, here the estimates are more involved but follow the general idea outlined in \cite{Ewing1978}. Introducing mixed terms into the difference first leads to
    \begingroup
    \allowdisplaybreaks
    \begin{align*}
        \int_\Omega &K_\text{lin}(h_i) \nabla \left(\frac{\Theta_i + \Theta_{i-1}}{2}\right) \cdot \nabla \Phi \di{x} 
        - \int_\Omega K_\text{lin}(\Hat{h}(t_i)) \nabla \Hat{\Theta}(t_i) \cdot \nabla \Phi \di{x}
        \\
        &= \underbrace{\int_\Omega K_\text{lin}(h_i) \nabla \left(\frac{\Theta_i + \Theta_{i-1}}{2}\right) \cdot \nabla \Phi \di{x} 
        - \int_\Omega K_\text{lin}(h_i) \nabla \left(\frac{\hat{\Theta}(t_i) + \hat{\Theta}(t_{i-1})}{2}\right) \cdot \nabla \Phi \di{x}}_{=I_{11, i}}
        \\
        &+ \underbrace{ \int_\Omega K_\text{lin}(h_i) \nabla \left(\frac{\hat{\Theta}(t_i) + \hat{\Theta}(t_{i-1})}{2}\right) \cdot \nabla \Phi \di{x} 
        -  \int_\Omega K_\text{lin}(\hat{h}(t_i)) \nabla \left(\frac{\hat{\Theta}(t_i) + \hat{\Theta}(t_{i-1})}{2}\right) \cdot \nabla \Phi \di{x}}_{= I_{12, i}}
        \\
        &+ \underbrace{\int_\Omega K_\text{lin}(\hat{h}(t_i)) \nabla \left(\frac{\hat{\Theta}(t_i) + \hat{\Theta}(t_{i-1})}{2}\right) \cdot \nabla \Phi \di{x} - \int_\Omega K_\text{lin}(\Hat{h}(t_i)) \nabla \Hat{\Theta}(t_i) \cdot \nabla \Phi \di{x}}_{=I_{13, i}}. 
    \end{align*}
    \endgroup
    Starting with $I_{11, i}$, where we obtain
    \begin{align*}
        2 I_{11, i} &= \int_\Omega K_\text{lin}(h_i) \nabla (\Bar{\Theta}_i + \Bar{\Theta}_{i-1}) \cdot \nabla (\Bar{\Theta}_i - \Bar{\Theta}_{i-1}) \di{x} =  \int_\Omega K_\text{lin}(h_i) (\nabla \Bar{\Theta}_i \cdot \nabla \Bar{\Theta}_i - \nabla\Bar{\Theta}_{i-1} \cdot \nabla \Bar{\Theta}_{i-1}) \di{x}.
    \end{align*}
    To further estimate the error, we now sum over $m=1, \dots, i$, and using summation by parts we obtain
    \begin{align*}
        \sum_{m=1}^i \int_\Omega K_\text{lin}(h_i) (\nabla \Bar{\Theta}_i \cdot \nabla \Bar{\Theta}_i - \nabla\Bar{\Theta}_{i-1} \cdot \nabla \Bar{\Theta}_{i-1}) \di{x} &= \sum_{m=1}^{i-1} \int_\Omega \nabla \Bar{\Theta}_m \cdot \nabla \Bar{\Theta}_m (K_\text{lin}(h_m)  - K_\text{lin}(h_{m+1})) \di{x}
        \\ 
        &\hspace{50pt} + \int_\Omega \nabla \Bar{\Theta}_i \cdot \nabla \Bar{\Theta}_i K_\text{lin}(h_i) \di{x}.
    \end{align*}
    The sum on the right can be further estimated by 
    \begin{equation*}
        \left | \sum_{m=1}^{i-1} \int_\Omega \nabla \Bar{\Theta}_m \cdot \nabla \Bar{\Theta}_m (K_\text{lin}(h_m)  - K_\text{lin}(h_{m+1})) \di{x} \right | \leq  C \sum_{m=1}^{i-1} \Delta t  \|\nabla \Bar{\Theta}_m\|_{L^2(\Omega)}^2,
    \end{equation*}
    where we used that $K_\text{lin}$ is Lipschitz. For $I_{12}$ we start with a similar strategy,
    \begin{align*}
        &\sum_{m=1}^{i} I_{12, i} = \underbrace{\int_\Omega (K_\text{lin}(h_i) - K_\text{lin}(\Hat{h}(t_i)))  \nabla \Bar{\Theta}_i \cdot \nabla \left(\frac{\hat{\Theta}(t_i) + \hat{\Theta}(t_{i-1})}{2}\right) \di{x}}_{= I_{12, a}}
        \\ 
        &+ \underbrace{\sum_{m=1}^{i-1} \int_\Omega (K_\text{lin}(h_m)  - K_\text{lin}(\hat{h}(t_{m}))) \nabla \Bar{\Theta}_m  \cdot \nabla \left(\frac{\Hat{\Theta}(t_{m}) + \Hat{\Theta}(t_{m-1})}{2} - \frac{\Hat{\Theta}(t_{m+1}) + \Hat{\Theta}(t_{m})}{2}\right) \di{x}}_{= I_{12, b}}
        \\
        &+ \underbrace{\sum_{m=1}^{i-1} \int_\Omega \left[K_\text{lin}(h_m)  - K_\text{lin}(\hat{h}(t_{m})) - K_\text{lin}(h_{m+1}) + K_\text{lin}(\hat{h}(t_{m+1}))\right] \nabla \Bar{\Theta}_m  \cdot \nabla \left(\frac{\Hat{\Theta}(t_{m}) + \Hat{\Theta}(t_{m-1})}{2}\right) \di{x}}_{= I_{12, c}}.  
    \end{align*}
    The first term $I_{12, a}$ can be estimated with the same ideas as before, leading to
    \begin{equation*}
        |I_{12, a}| \leq \e \|\nabla \Bar{\Theta}_i\|_{L^2(\Omega)} + C (\Delta t )^2 + C \sum_{n=1}^{i-1} \Delta t \|\Bar{\Theta}_n\|_{L^2(\Omega)}^2. 
    \end{equation*}
    For $I_{12, b}$ we need \cref{eq:proof_h_estimate} and that the time derivative of $\nabla \hat{\Theta}$ exists, to get the estimate
    \begin{align*}
        |I_{12, b}| &\leq C \sum_{m=1}^{i-1} \Delta t \|\nabla \Bar{\Theta}_m\|_{L^2(\Omega)} \left[\Delta t  + \sum_{n=1}^{m-1} \Delta t \|\Bar{\Theta}_n\|_{L^2(\Omega)} \right] 
        \\
        &= C \sum_{m=1}^{i-1} \Delta t \|\nabla \Bar{\Theta}_m\|_{L^2(\Omega)} \left[\Delta t + (i-1 - m) \Delta t \|\Bar{\Theta}_m\|_{L^2(\Omega)} \right] 
        \\
        &\leq C (\Delta t)^2 + C \sum_{m=1}^{i-1} \Delta t \| \Bar{\Theta}_m\|_{H^1(\Omega)}^2.
    \end{align*}
    To bound $I_{12, c}$ we use that $K_\text{lin} \in H^1((-a^*, a^*))$ and since the exact $K$ is in $C^{1,1}$ we also have $|\partial_h K_\text{lin}(h_1) - \partial_h K_\text{lin}(h_2)| \leq C |h_1 - h_2| + \Delta h$, for a.a. $h_1, h_2 \in (-a^*, a^*)$. This yields
    \begin{align*}
        |I_{12, c}| &\leq C \sum_{m=1}^{i-1} \Delta t \int_\Omega \|\nabla \Bar{\Theta}_m\| \int_0^1 \Bigl|\partial_h K(h_m + \tau (h_{m+1} - h_{m})) 
        \\
        &\hspace{230pt} - \partial_h K(\hat{h}(t_{m}) + \tau (\hat{h}(t_{m+1}) - \hat{h}(t_{m})))\Bigr| \di{\tau} \di{x}
        \\
        &\leq C \sum_{m=1}^{i-1} \Delta t \int_\Omega \|\nabla \Bar{\Theta}_m\| \int_0^1 \left |\Delta h + h(t_m) - h_m + \tau \left[h_{m+1} - h_{m} - \hat{h}(t_{m+1}) + \hat{h}(t_{m})\right] \right | \di{\tau} \di{x}
        \\
        &\leq C \sum_{m=1}^{i-1} \Delta t \|\nabla \Bar{\Theta}_m\|_{L^2(\Omega)} \left [ \Delta h + \Delta t + \sum_{n=1}^{m-1} \Delta t \|\Bar{\Theta}_n\|_{L^2(\Omega)} \right].
    \end{align*}
    Next, we apply the Young-inequality and can proceed in the same way as for the estimate of $I_{12, a}$. Lastly, $I_{13}$ can again be estimated by summation, leading to
    \begingroup
    \allowdisplaybreaks
    \begin{align*}
        \left | \sum_{m=1}^i  I_{13, i} \right | 
        &\leq \int_\Omega \left | K_\text{lin}(h(t_i)) \nabla \left(\frac{\Hat{\Theta}(t_{i-1}) - \Hat{\Theta}(t_{i})}{2}\right) \nabla \Bar{\Theta}_i \di{x} \right | 
        \\
        &+ \sum_{m=1}^{i-1} \left | \int_\Omega (K_\text{lin}(h(t_m))  - K_\text{lin}(h(t_{m+1}))) \nabla \Bar{\Theta}_m  \cdot \nabla \left(\frac{\Hat{\Theta}(t_{m-1}) - \Hat{\Theta}(t_{m})}{2}\right) \di{x} \right | 
        \\
        &+ \sum_{m=1}^{i-1} \left | \int_\Omega K_\text{lin}(h(t_{m+1})) \nabla \Bar{\Theta}_m  \cdot \left(\frac{\Hat{\Theta}(t_{m-1}) - \Hat{\Theta}(t_{m})}{2} - \frac{\Hat{\Theta}(t_{m}) - \Hat{\Theta}(t_{m+1})}{2} \right) \di{x} \right | 
        \\
        &\leq C \Delta t \|\nabla \Bar{\Theta}_i\|_{L^2(\Omega)} + \sum_{m=1}^{i-1} C \|\nabla \Bar{\Theta}_m\|_{L^2(\Omega)} (\Delta t )^2
        \\
        &\leq C (\Delta t)^2 + \e \|\nabla \Bar{\Theta}_i\|_{L^2(\Omega)}^2 + \sum_{m=1}^{i-1} \Delta t \|\nabla \Bar{\Theta}_m\|_{L^2(\Omega)}^2.
    \end{align*}
    \endgroup
    In the first inequality above the Lipschitz continuity of $K_\text{lin}$, the Hölder inequality, and the time-regularity of $\nabla \hat{\Theta}$ were utilized. In the second inequality, we used Young again.

    We can now combine all of the above estimates. To this end, we also have to sum $I_1, \dots I_{10}$ over $1,\dots,i$ to combine them with the gradient estimates, to be precise we would first sum and then carry out the above computations inside the sum. This leads to the estimate
    \begin{align}\label{eq:proof_first_estimate}
            \sum_{m=1}^i &\Delta t C(a^*) \|d_t \Bar{\Theta}_m\|^2_{L^2(\Omega)} + K(a^*) \|\nabla \Bar{\Theta}_i\|_{L^2(\Omega)}^2 
            \leq C (\Delta t)^2 +  
            C \e \|\nabla \Bar{\Theta}_i\|_{L^2(\Omega)}^2
            + \e \sum_{m=1}^i \Delta t C \|d_t \Bar{\Theta}_m\|^2_{L^2(\Omega)} \notag
            \\
            &+ C \sum_{m=1}^{i-1} \Delta t \left[ \|\Bar{\Theta}_m\|^2_{H^1(\Omega)} + \|\Bar{\Theta}_m\|^2_{L^2(\Omega)} + \|\nabla \Bar{\Theta}_m\|^2_{L^2(\Omega)} \right]
            + C \sum_{m=1}^{i} \Delta t \sum_{n=1}^{m-1} \Delta t \|\Bar{\Theta}_n\|^2_{L^2(\Omega)} +  C (\Delta h)^2. 
    \end{align}
    We are missing the $L^2$-norm of $\Bar{\Theta}_i$ on the left for the final estimate. Therefore, we use the same computation as \cite[Eq. (3.19)]{Ewing1978} to obtain
    \begin{equation}\label{eq:proof_add_theta}
        \begin{split}
            K(a^*)\left(\|\bar{\Theta}_i\|^2_{L^2(\Omega)} + \|\bar{\Theta}_{i-1}\|^2_{L^2(\Omega)}\right) 
            &\leq 2 K(a^*) \Delta t \int_\Omega d_t \Bar{\Theta}_i \Bar{\Theta}_{i-1} \di{x} + K(a^*) (\Delta t)^2 \|d_t \bar{\Theta}_i\|^2_{L^2(\Omega)}
            \\
            &\leq \frac{C(a^*)\Delta t}{4} \|d_t \bar{\Theta}_i\|^2_{L^2(\Omega)} + C \Delta t \|\bar{\Theta}_{i-1}\|^2_{L^2(\Omega)}.
        \end{split}
    \end{equation}
    For the last inequality, we used the assumption $\Delta t \leq \nicefrac{C(a^*)}{8K(a^*)}$. Next, we sum \cref{eq:proof_add_theta} from $1,\dots,i$ and add the estimate to \cref{eq:proof_first_estimate}. Then the sum over $\| d_t \bar{\Theta}\|$ which originates when summing  (\ref{eq:proof_add_theta}) can be collected into the already present term in (\ref{eq:proof_first_estimate}). 
    Additionally, the $\e$ on the right side originates from the Young inequality and can be chosen such, that all terms connected to $\e$ can be incorporated into the left-hand side. All other sums on the right can be further bounded by the $H^1$-norm, which yields
    \begin{align*}
        \sum_{m=1}^i \Delta t C \|d_t \Bar{\Theta}_m\|^2_{L^2(\Omega)} + C \|\Bar{\Theta}_i\|_{H^1(\Omega)}^2 
        \leq C (\Delta t)^2 + C (\Delta h)^2 + C \sum_{m=1}^{i-1} \Delta t \|\Bar{\Theta}_m\|^2_{H^1(\Omega)}.
    \end{align*}
    Finally, applying the discrete Grönwall lemma gives the desired result. As mentioned at the start the estimates for $\Bar{\vartheta}$ follow analogously. Plugging the estimate for $\Bar{\Theta}$ back into \cref{eq:proof_h_estimate}, we find the estimate for $\Bar{h}$. 
\end{proof}
\begin{remark}
    The convergence of the time-stepping scheme requires that the precomputing also gets refined with smaller time steps. This condition appears because the linear interpolation has a piecewise constant derivative. This is not necessary for a smoother interpolation, for example, quadratic splines.
\end{remark}
\subsection{Further error sources} In addition to the discretization in time, we also have to discretize the function space.
To this end, we consider a finite element (FE) approach. 
Define the triangulation $\mathcal{T}_{H_M,\Omega}$ of $\Omega$ and $\mathcal{T}_{H_m,Y_0}$ of $Y_0$, where $H_M$ and $H_m$ denote the largest edge length of the simplices in $\mathcal{T}_{H_M,\Omega}$ and $\mathcal{T}_{H_m,Y_0}$ respectively.
Denote with $\mathbb{P}_1$ the space of linear polynomials. On the macro scale, we define the finite element space $\mathrm{X}_{H_M,\Omega} \subset H^1(\Omega)$ by
\begin{equation*}
    \mathrm{X}_{H_M,\Omega} = \left\{
        \Phi \in C(\overline{\Omega}) : \Phi_{|T} \in \mathbb{P}_1(T) 
        \text{ for all } T \in \mathcal{T}_{H_M,\Omega}
    \right\}.
\end{equation*}
Similarly, on the microscale the space $\mathrm{X}_{H_m, Y_0} \subset H^1(Y_0)$ is given by
\begin{equation*}
    \mathrm{X}_{H_m, Y_0} = \left\{
        \phi \in C(\overline{Y_0}) : \phi_{|T} \in \mathbb{P}_1(T) 
        \text{ for all } T \in \mathcal{T}_{H_m,Y_0}
    \right\}.
\end{equation*}
Let $N_M$ denote the degrees of freedom (DOFs) on the macroscale and $N_m$ DOFs on the microscale. Define the basis functions $\{\Phi_n\}_{n=1}^{N_M}$ such that $\text{span}(\Phi_n) = \mathrm{X}_{H_M,\Omega}$ and equally the micro basis functions $\{\phi_k\}_{k=1}^{N_m}$. The time  discretized finite element approximation of the solutions $(\Theta, \vartheta, h)$ are then given by ($H=(H_M,H_m)$)
\begin{align*}
    \Theta_{i, H} = \sum_{n=1}^{N_M} \alpha_{i, n} \Phi_n(x),
    \quad 
    \vartheta_{i, H} = \sum_{n=1}^{N_M} \sum_{k=1}^{N_m}  \beta_{i, n, k} \Phi_n(x) \phi_k(y) 
    \quad \text{ and } \quad 
    h_{i, H} = \sum_{n=1}^{N_M} h_{i, n} \Phi_n(x),
\end{align*}
with the additional condition $\Theta_{i, H} = \vartheta_{i, H \, |\Gamma_0}$ for all $i=1,\dots,N$. For the coefficients it holds, $\alpha_{i, n}, \beta_{i, n, k}, h_{i, n} \in \R$.

The above FE approach introduces another error in the numerical approximation. The error estimate from \cref{lemma:time_stepping} can also be extended to include the projection error in regards to the FE space. This was carried out for a comparable problem in \cite[Section 3]{Ewing1978} and could be transferred to our setup under regularity assumptions of the solutions. Here, we do not carry out the complete proof, instead, we demonstrate the error in the following section through numerical simulations. 

Note that the above FE approach is a natural candidate for discretizing the problem, but can become quite expensive due to the high dimensionality of $\vartheta$, especially in three dimensions. We still use the above method since we only want to demonstrate the previous analytical results on a two-dimensional setup. Parallelization is used to speed up the calculations; see \cref{sec:simulations} for details. However, in other applications, this procedure may not be feasible. In \cref{subsec:lit_review} we gave an overview of possible extensions and different approaches. Note, that the presented precomputing method and theory could also be combined with the alternatives to reduce the computational effort.

Next to the discretization of $(\Theta, \vartheta)$ we also need to solve the cell problems \cref{eq:system:cell_problem} with a FE approach. To this end introduce the mesh size $H_\text{cell}$ of the mesh for $Y\setminus Y(h)$. Let $K_H$ denote the effective conductivity computed with the FE solution. Under the assumption that we use piecewise linear FEs for solving the cell problems and the cell solutions are sufficiently smooth, we can obtain the error estimate
\begin{equation*}
    |K(h) - K_H(h)| \leq C H_\text{cell},
\end{equation*}
for a $C < \infty$ independent of $H_\text{cell}$.
\section{Simulations}\label{sec:simulations}
In this last section, we want to investigate and back up the previous analysis results with numerical simulations. To this end, we apply the time-discretized, finite element approach with precomputation as explained in \cref{sec:numerical_analysis}. Before we study the convergence behavior, we introduce the general problem setup and explain some implementation details.
We use the FEM library FEniCS \cite{Fenics} and implement a custom parallelization for the micro problems. Our code is freely available online \cite{Code}. 

As mentioned in the previous section, we want to apply an approach where the time step for $\Theta$ and $\vartheta$ is carried out simultaneously. Therefore, at time step $i$, we have to solve the linear system
\begin{equation*}
    \begin{pmatrix}
        A & C_1 & C_2 &\dots & C_{N_M} 
        \\
        D_1 & B_1 & 0 & \dots & 0 
        \\
        D_2 & 0 & \ddots & & \vdots
        \\
        \vdots & \vdots & & \ddots & 0
        \\
        D_{N_M} & 0 & \dots & 0 & B_{N_M} 
    \end{pmatrix}
    \begin{pmatrix}
        \alpha_i
        \\
        \beta_{i, 1} 
        \\
        \vdots
        \\
        \vdots
        \\
        \beta_{i, N_M}
    \end{pmatrix}
    = 
    \begin{pmatrix}
        F_i
        \\
        f_{i, 1} 
        \\
        \vdots
        \\
        \vdots
        \\
        f_{i, N_M}
    \end{pmatrix}
    .
\end{equation*}
Here, we do not want to go too deep into the implementation details of each matrix block, for this, we point to the available source code. Instead, we only explain the role of each part:
\begin{itemize}
    \item The matrix $A \in \R^{N_M \times N_M}$ encodes the macroscopic bilinear form.
    \item The matrices $B_k \in \R^{N_m \times N_m}$ represent the microscopic bilinear form, for $k=1,\dots N_M$.
    \item $C_k \in \R^{N_M \times N_m}$ describes the heat flow across the micro boundary $\Gamma_0$, and can be created by computing $\kappa(h_i) \nabla \vartheta_i \cdot n$ over $\Gamma_0$.
    \item $D_k \in \R^{N_m \times N_m}$ enforces the coupling that $\Theta = \vartheta$ on $\Gamma_0$. Therefore this matrix only contains 0 and 1 on the corresponding entries. In particular, the row structure of all $D_k$ is the same, only one column contains ones and for each $k$ this column is shifted to a different position. In the corresponding rows of matrix $B_k$ only a $-1$ is present and on the right-hand side a 0.
    \item The right-hand side $F_i \in \R^{N_M}$ and $ f_{i, k} \in \R^{N_m}$ include the heat sources and information from the previous time step.
\end{itemize}
The role of $C_k$ and $D_k$ is the coupling between macro and micro scale. Note that there is no direct coupling between the different cells and the block containing the $B_k$ matrices is diagonal. Additionally, all matrices are sparse.
The matrices $A, B_k, C_k$ need to be recomputed in each time step, since they depend on the current height $h_i$. This is easily parallelizable once $h_i$ is known.

Before we start presenting the simulation results, we collect here all information regarding the data and geometry. For the macro domain we use $\Omega = [0, 1]^2$, the time interval is set to $S=[0, 10]$, and the microdomain $Y_0$ is defined as a disk with radius $r = 0.25$. Note, that for a disk the Hanzawa transformation only is a scaling in the direction of the radius. The conductivity values are set to $\kappa = \mathcal{K} = 0.1$ and the initial temperature to $\Theta_0 = \vartheta_0 = \Theta^{ref} = 0$. We only consider a macroscopic heat source, meaning $f = 0$, given by
\begin{equation*}
    F(x, t) = 0.75 \max \Bigl\{0,  \min \bigl[1, g(t, x)\bigr]\Bigr\} \max\Bigl\{0, \min \bigl[1, 5 - t\bigr]\Bigr\}
\end{equation*}
with
\begin{equation*}
    g(t, x) = 2 - 10 \left\|x - \begin{pmatrix}
         0.2 + \nicefrac{0.6 t}{5}   \\
         0.7 
    \end{pmatrix} \right\|_\infty.     
\end{equation*}
$F$ describes a smoothed step function that moves to the right and becomes 0 for $t \geq 5$.
Additionally, we introduce a speed variable $v=0.1$ to better control the change of the microdomain, e.g., $\partial_t h = v(\Theta - \Theta^{ref})$. This does not influence the previous analysis. We remark, that we do not have a concrete application in mind for this setup. Instead, the values are chosen to observe interesting aspects of the model.

If not mentioned otherwise, we use $\Delta t = 0.1$, $H_M = 0.05$, $H_m = 0.06$, $H_\text{cell} = 5\mathrm{e}{-4}$ and $\Delta h = \nicefrac{1}{320}$ in the following simulations. 

Since our microdomains are symmetric around the center of the domain, the effective conductivity $K(h)$ reduces to a scalar value. The dependence of $K$ of different radii $r$, is shown in \cref{fig:K_dependence_of_h}. Note, that we characterize the change in radius by the height function $h$, a negative $h$ equals shrinkage and a positive $h$ growth of the disk radius. 
\begin{figure}[ht]
    \centering
    \begin{subfigure}[b]{0.5\textwidth}
         \centering
         \includegraphics[width=\textwidth]{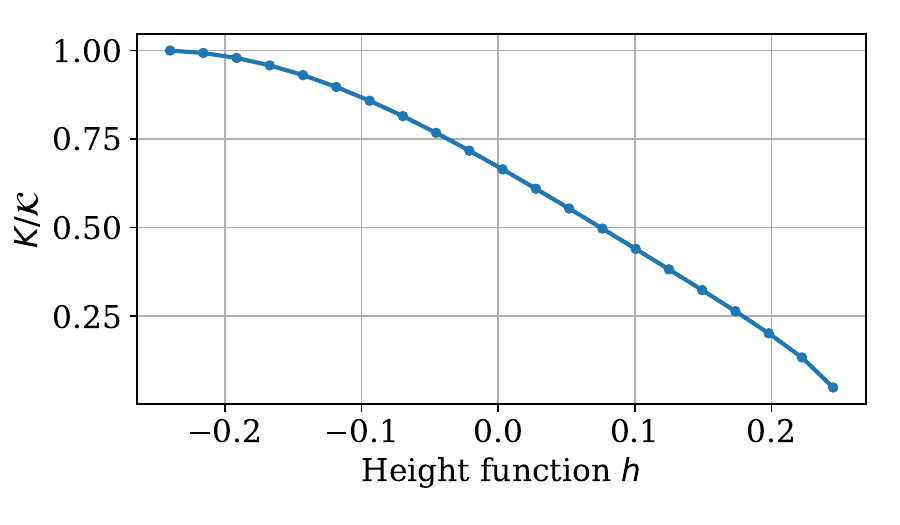}
     \end{subfigure}
     \hfill
     \begin{subfigure}[b]{0.24\textwidth}
         \centering
         \includegraphics[width=\textwidth]{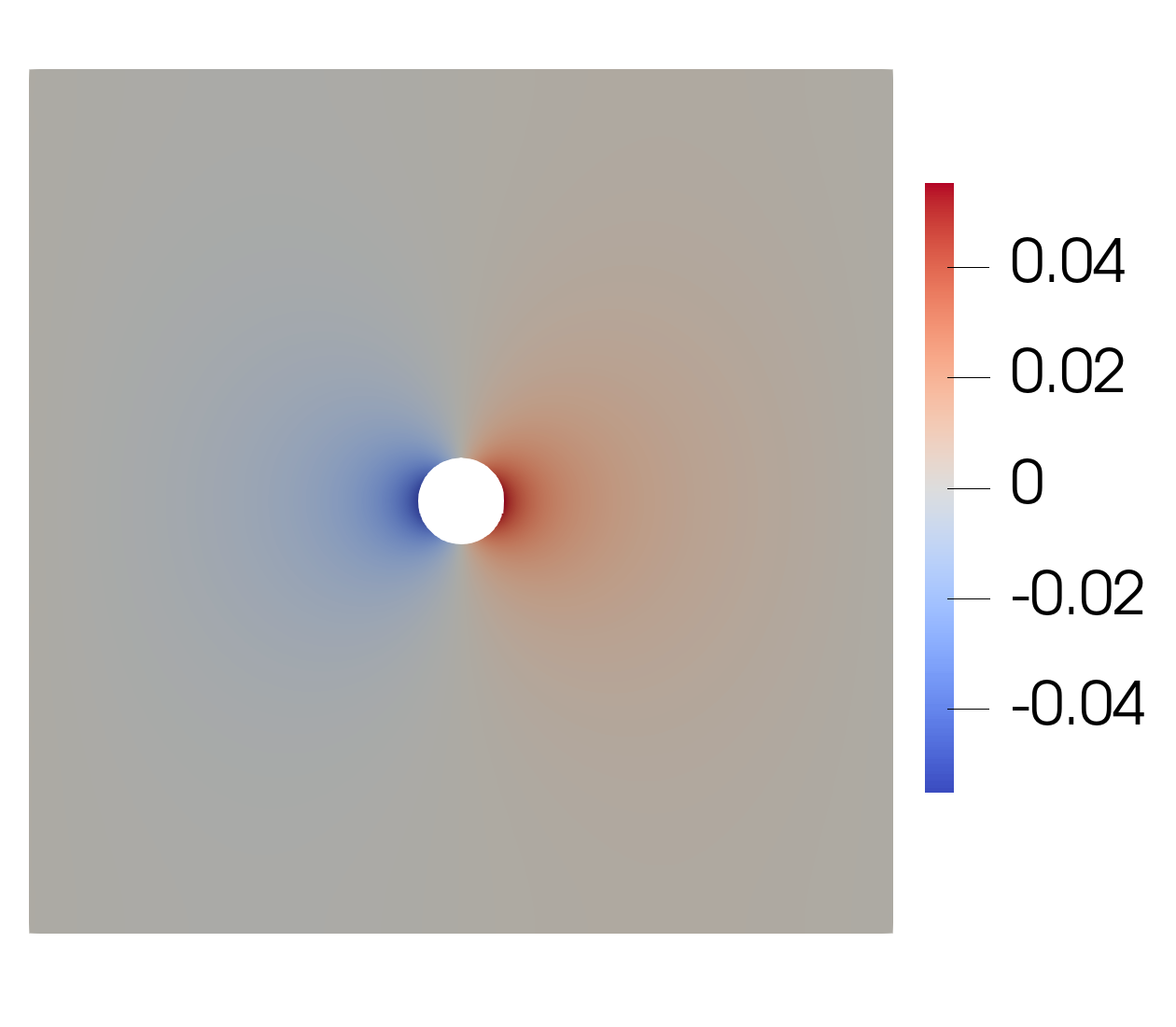}
         \vspace{0.1cm}
         \caption{$h=-0.2$}
     \end{subfigure}
     \hfill
     \begin{subfigure}[b]{0.24\textwidth}
         \centering
         \includegraphics[width=\textwidth]{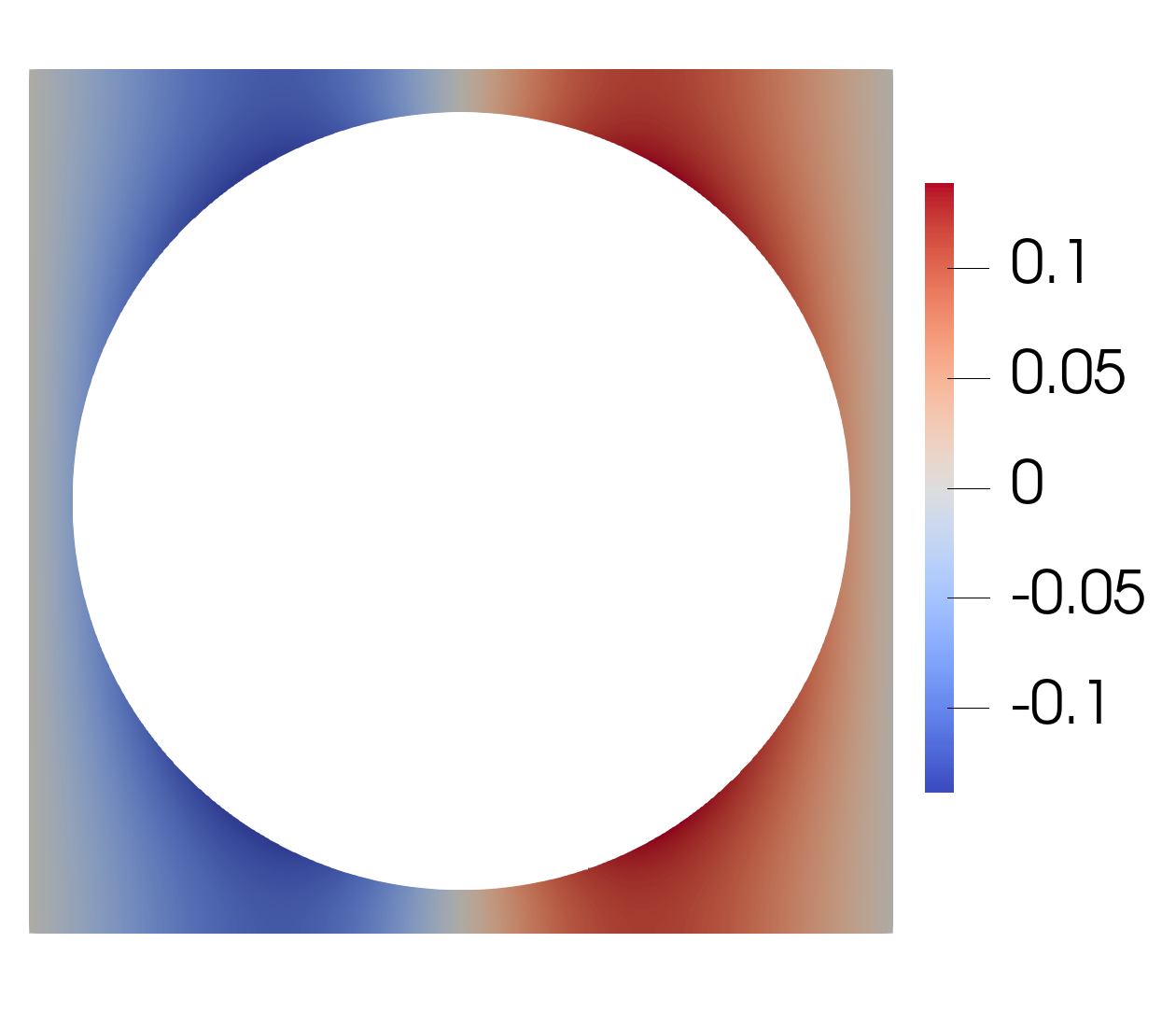}
         \vspace{0.1cm}
         \caption{$h=0.2$}
     \end{subfigure}
    \caption{The effective conductivity $K$ for changes in height $h$. At the right two cell solutions for different $h$.}
    \label{fig:K_dependence_of_h}
\end{figure}
\subsection{Precomputing strategy}
We start with the numerical investigation of the precomputing method. Both a piecewise linear interpolation as well as a quadratic spline interpolation will be studied. For the precomputation, we solve $(\Delta h)^{-1} = 320$ cell problems for different height values $h$ that are taken from a uniform grid in $[-0.245, 0.245]$. Outside of this interval, we extend the interpolation by a constant value. Since no analytic solution is at hand, we compute a reference solution that uses all cell problems and the quadratic spline interpolation. The error is then computed between the reference solution and simulation results that utilize fewer cell problems for the interpolation.

The results are presented in \cref{fig:precomputing}. For the linear interpolation, we can observe the expected convergence order of $\mathcal{O}\left((\Delta h)^2\right)$. In the case of quadratic interpolation, we note for a large $\Delta h$, meaning the use of fewer precomputed values, we detect that the error decreases with the expected order of $\mathcal{O}\left((\Delta h)^3\right)$. However, as soon as finer interpolation is used, the error stops decreasing and reaches a plateau. This is where the discretization error of the cell problems comes into play, affecting the accuracy of the interpolation. This effect is further described for different $H_\text{cell}$ in \cref{fig:precomputing_c}. There we focus only on the function $\Theta$ to simplify the visualization. It can be seen that for larger $H_\text{cell}$, meaning a coarser cell mesh, the plateau occurs earlier. Therefore, to obtain the expected convergence in the precomputing scheme, an accurate solution of the cell problems is mandatory.
\begin{figure}[ht]
    \centering
    \begin{subfigure}[b]{0.32\textwidth}
         \centering
         \includegraphics[width=\textwidth]{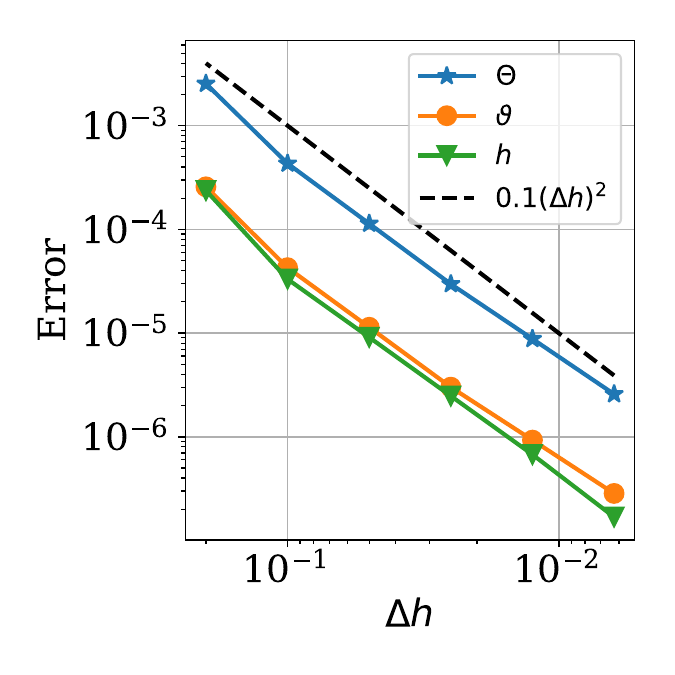}
         \caption{Linear interpolation}
     \end{subfigure}
     \hfill
     \begin{subfigure}[b]{0.32\textwidth}
         \centering
         \includegraphics[width=\textwidth]{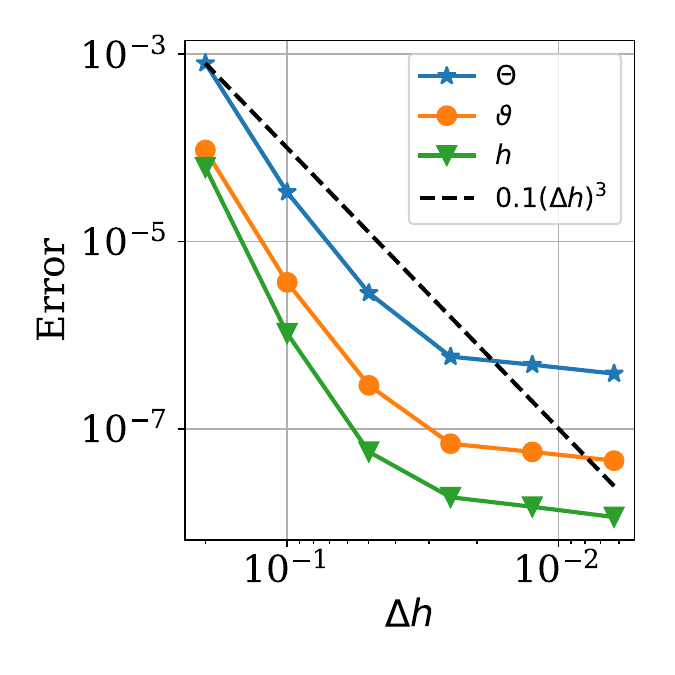}
         \caption{Quadratic interpolation}
     \end{subfigure}
     \hfill
     \begin{subfigure}[b]{0.32\textwidth}
         \centering
         \includegraphics[width=\textwidth]{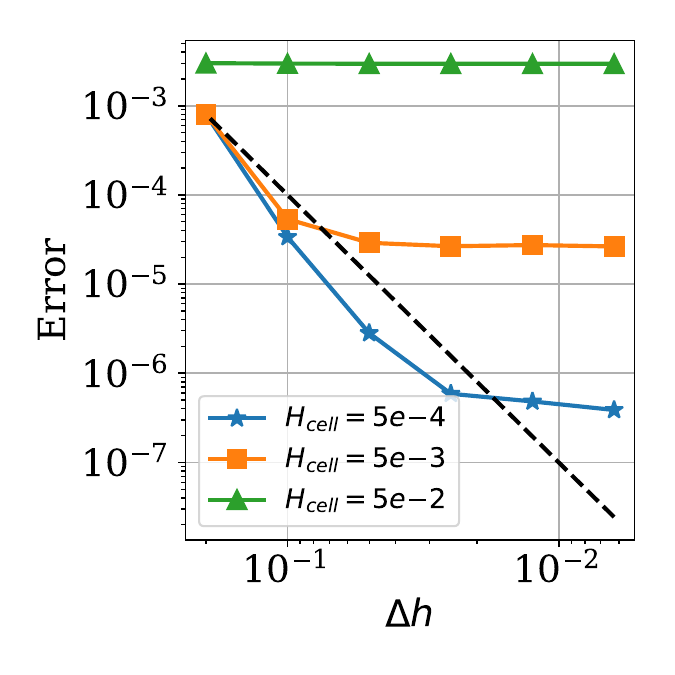}
         \caption{Dependence on $H_\text{cell}$}
         \label{fig:precomputing_c}
     \end{subfigure}
    \caption{Convergence curves for the precomputing strategy. (a) shows the error for linear interpolation of $K$, (b) the error for quadratic spline interpolation of $K$. For $\Theta$ the plotted error is in the $L^2(S;H^1(\Omega))$ norm, for $\vartheta$ the $L^2(S\times \Omega; H^1(Y_0))$ norm is used and the error for $h$ is computed in the $L^2(S\times \Omega)$ norm. In (c) the influence of discretization $H_\text{cell}$ of the cell solutions on the accuracy of the quadratic interpolation is demonstrated. Here, only the error for $\Theta$ is shown.}
    \label{fig:precomputing}
\end{figure}
Another aspect we want to highlight is the interpolation property when the disk gets close to the outer boundary of the unit cell, meaning $h \to 0.25$ or equally $r \to 0.5$. Corresponding to the previous theory, for a radius close to $0.5$ we would obtain a small $a^*$ since the disk is not allowed to grow much further. Therefore, by \cref{remark:Lipschitz_constant_of_derivative} the slope of $K(h)$ may become large. This is also observable in \cref{fig:K_dependence_of_h}. To better demonstrate the effect on the interpolation, we construct linear and quadratic interpolations for different $\Delta h$ and compare them on the interval $I_h = [-0.25, h_\text{max}]$. Again as a reference, we use the finest interpolation with $\Delta h = \nicefrac{1}{320}$ and compare by computing the $L^\infty$-difference on a uniform grid in $I_h$ with 1000 points. This comparison is visualized in \cref{fig:interpolation_at_boundary}. We notice, that when $h_\text{max}$ gets closer $0.25$ the convergence order gets reduced. To circumvent this effect one could employ a nonuniform sampling of the height values used for the interpolation, e.g., solve more cell problems when $h$ is close to $0.25$. Note, that in the quadratic spline interpolation, we again observe the influence of $H_\text{cell}$ once $\Delta h$ is small. From a theoretical point of view, this effect could also appear when $h \to 0$, but there we observe that the slope of $K(h)$ is rather flat. 

Finally, we want to briefly mention the computation time. On our setup, an AMD EPYC 7282 16-core CPU, it takes about 10 minutes to solve a cell problem for $H_\text{cell}=5\mathrm{e}{-4}$, including the mesh generation. Since the precomputation is easily parallelizable, it took about 5 hours to compute the full dataset of 320 cell problems, as we were able to spread the work over several CPUs. Then, solving the entire problem in parallel on 8 cores, using the discretization mentioned above, takes about 13 seconds. If we had not used the precomputation scheme, the cell problems would need to be solved at each macro DOF and possibly at each time step, this would result in computation times of several days or weeks. This is also the reason why we do not use such a solution as a reference for the convergence study. 
\begin{figure}[ht]
    \centering
    \begin{subfigure}[b]{0.48\textwidth}
         \centering
         \includegraphics[width=\textwidth]{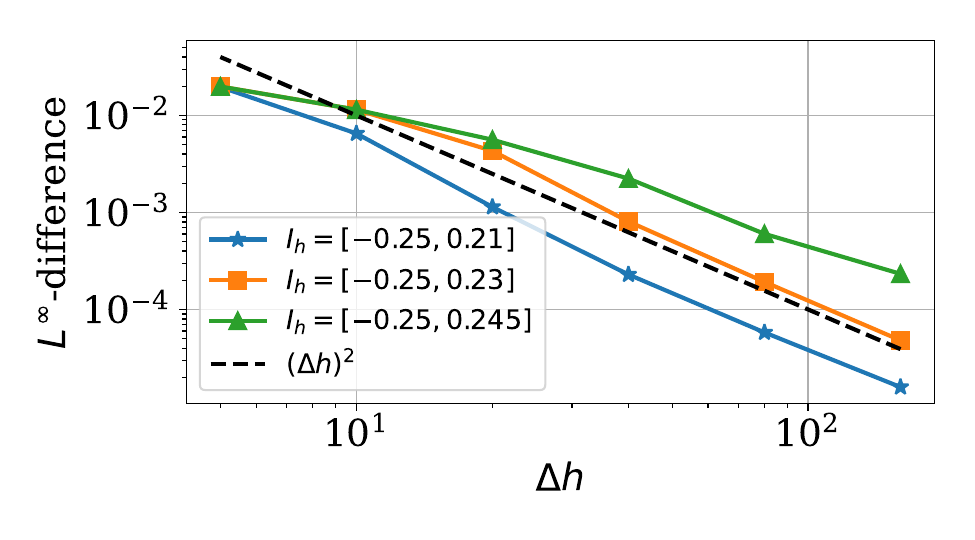}
         \caption{Linear interpolation}
     \end{subfigure}
     \hfill
     \begin{subfigure}[b]{0.48\textwidth}
         \centering
         \includegraphics[width=\textwidth]{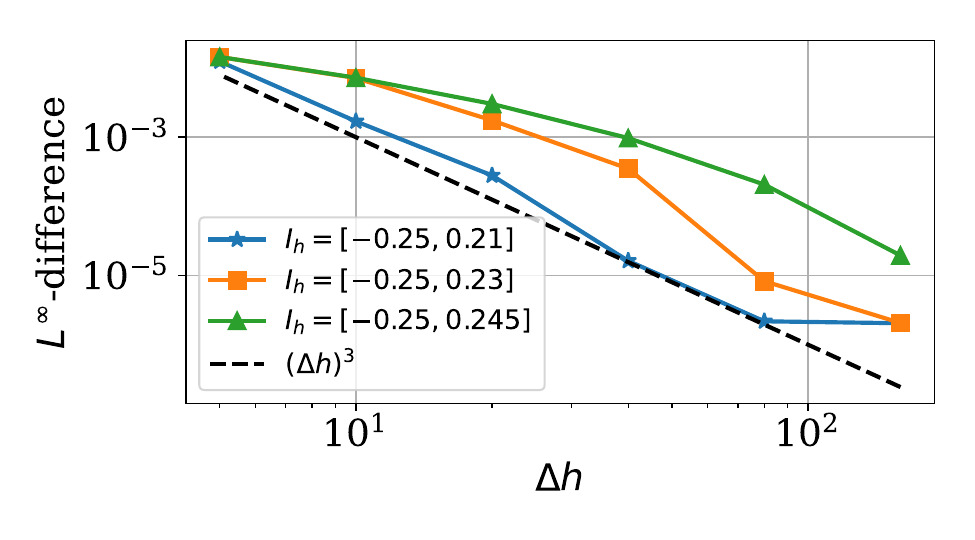}
         \caption{Quadratic interpolation}
     \end{subfigure}
    \caption{Convergence order of the interpolation of $K$ depending on the size of the height interval $I_h$. }
    \label{fig:interpolation_at_boundary}
\end{figure}
\subsection{Time and space discretization error}
In this second simulation section, we briefly present the convergence behavior with respect to the spatial discretization of the macro and micro scales and the time discretization. We study the influence of each discretization independently and fix the other parameters in each simulation. As before, no analytical solutions are available and a comparison is made with a finer numerical solution. For the fine reference solution, we use $H_M=0.01$, $H_m=0.006$, and $\Delta t = 0.003$ respectively for each convergence study and apply quadratic spline interpolation for the precomputing.

The results are presented in \cref{fig:error_convergence}. The order of error for the time-stepping method is consistent with the theoretical results of \cref{lemma:time_stepping}. Given the FE spaces of piecewise linear polynomials, both on the macro and micro scale, we also obtain the expected convergence behavior in regard to the space discretization.
\begin{figure}[ht]
    \centering
    \includegraphics[width=\textwidth]{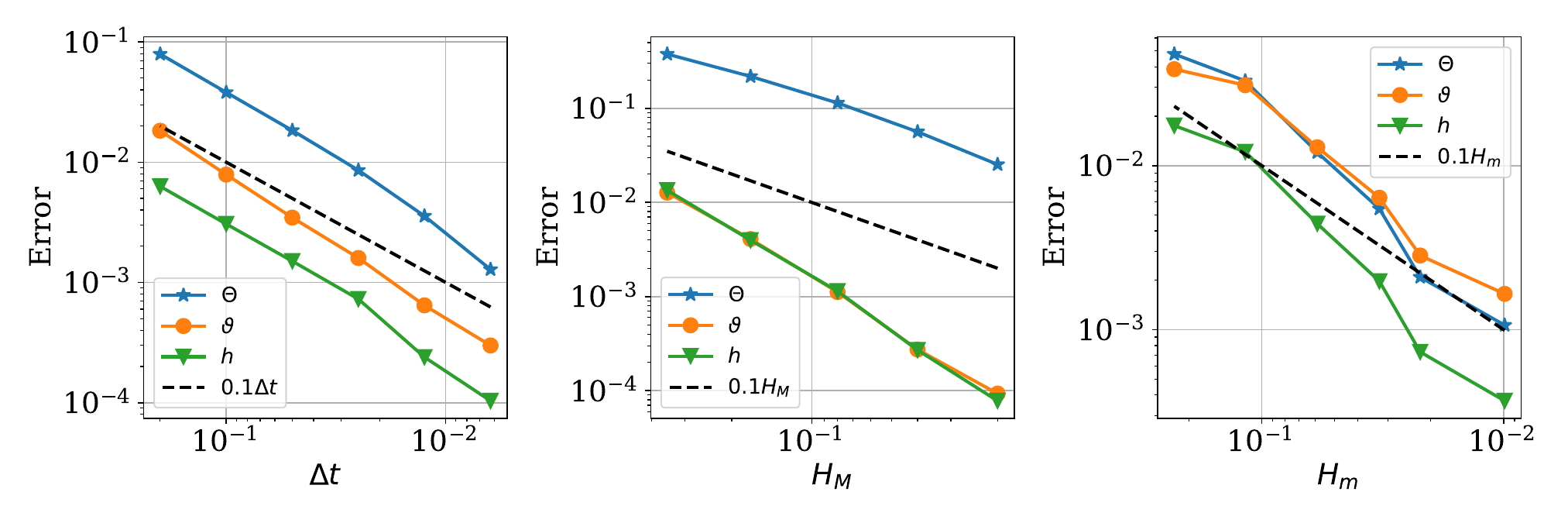}
    \caption{Convergence study concerning the time and space discretization. At the left: the behavior for different time steps $\Delta t$, in the middle: for different mesh sizes $H_M$ on the macro domain and on the right: different resolutions on the microdomain. For $\Theta$ the plotted error is the $L^2(S;H^1(\Omega))$ error, for $\vartheta$ the $L^2(S\times \Omega; H^1(Y_0))$ norm is used and the error for $h$ is computed in the $L^2(S\times \Omega)$ norm.}
    \label{fig:error_convergence}
\end{figure}
\section{Concluding remarks}\label{section:conclusion}
We have suggested, analyzed, and implemented a precomputing strategy for non-linearly coupled two-scale systems with evolving microstructures.
Utilizing the Hanzawa transformation, the existence of local-in-time, weak solutions was established.
Under more restrictive conditions on the data, it is expected that the solutions are unique and strong (see also \cite{Chen92,GaerttnerKnabnerRay23}).
We showed that the additional interpolation error introduced via the precomputing can be controlled by the step size and the interpolation order given that the underlying relationship in the material parameters is smooth.
We also introduced a semi-implicit time-stepping method to resolve the nonlinearities inherent to the moving microsctructures and showed linear convergence.
We note that it is possible to improve convergence by using, e.g., a Crank-Nicolson-Galerkin
approximation (\cite{Ewing1978}).
The simulation results presented in \cref{sec:simulations} confirm the analytical convergence.
The implementation was done in FEniCS and our code is freely available online \cite{Code}.

While the model in question in this paper is concerned with phase transitions on the microscale, we want to point out that the same kind of model with very similar analytical and numerical challenges appears in many different applications: precipitation in porous media, swelling of biological tissues, and degradation of concrete to name just a few.
The main advantage of our approach is the fact that a significant part of the computational effort is pushed into an offline phase in a way that is additionally \textit{perfectly parallelizable}.
There are of course potential drawbacks as well: When the underlying functions are discontinuous or extremely volatile, the interpolation error might be difficult to control.
Moreover, in our case we only have a one-dimensional bounded parameter space $h\in[-a^*,a^*]$, which is relatively easy to exhaust.
In cases where shape changes are possible (e.g.,~\cite{GaerttnerKnabnerRay23,LichtiLeichner2022}) the parameter space becomes much larger.

\section*{Acknowledgements}
The research activity of ME is funded by the European Union’s Horizon 2022 research and innovation program under the Marie Skłodowska-Curie fellowship project {\em{MATT}} (funding number 101061956, \url{https://doi.org/10.3030/101061956}).

TF acknowledges funding by the Deutsche Forschungsgemeinschaft (DFG, German Research Foundation) -- project nr.  
281474342/GRK2224/2 and by project nr. 439916647.

\bibliographystyle{abbrv}
\bibliography{references}

\end{document}